\theoremstyle{remark}
\theoremstyle{plain}
\newtheorem{Theo}[subsubsection]{Theorem}
\newtheorem{problem}[subsubsection]{Problem}
\newtheorem*{Theorem}{Theorem}
\newtheorem{cor}[subsubsection]{Corollary}
\newtheorem{Conj}{Conjecture}
\theoremstyle{definition}
\newtheorem{fact}[subsubsection]{Fact} 
\newtheorem{question}[subsubsection]{Question}
\newtheorem{Prop}[subsubsection]{Proposition}
\newtheorem{Lemm}[subsubsection]{Lemma} 
\newtheorem{definition}[subsubsection]{Definition}
\theoremstyle{remark}
\newtheorem{notation}[subsubsection]{Notation}
\newtheorem{Obs}[subsubsection]{Observation}
\newtheorem{rem}[subsubsection]{Remark}
\newtheorem{remark}[subsubsection]{Remark}
\newtheorem{example}[subsubsection]{Example}
\newtheorem{ex}[subsection]{Exercise}
\newcommand{\nc}{\newcommand}
\newcommand{\lmod}{{\operatorname{-Mod}}}
\newcommand{\quotes}[1]{``#1''}
\nc{\bp}{\begin{Prop}}
\nc{\ep}{\end{Prop}}
\nc{\ssn}{\pagebreak \section}
\nc{\df}{{\bf Definition}\ }
\nc{\bl}{\begin{Lemm}}
\nc{\el}{\end{Lemm}}
\nc{\bex}{\begin{ex} \rm}
\nc{\eex}{\end{ex}}
\nc{\bt}{\begin{Theo}}
\nc{\et}{\end{Theo}}
\nc{\bq}{\begin{question}}
\nc{\eq}{\end{question}}
\nc{\bc}{\begin{cor}}
\nc{\ec}{\end{cor}}
\nc{\bob}{\begin{Obs}}
\nc{\eob}{\end{Obs}}
\nc{\N}{\mathbb{N}}
\nc{\Q}{\mathbb{Q}}
\nc{\Z}{\mathbb{Z}}
\nc{\Ss}{{\mathbb{S}}}
\nc{\Cc}{{\mathbb{C}}}
\nc{\F}{{\mathbb{F}}}
\nc{\Oo}{\mathcal{O}}
\nc{\Qq}{\mathbb{Q}}
\nc{\ulim}{\text{ulim}\ }
\nc{\Hom}{\text{Hom}}
\nc{\Ext}{\text{Ext}}
\nc{\Tor}{\text{Tor}}
\nc{\Ob}{\text{Ob}}
\nc{\id}{\text{id}}
\renewcommand{\mkbegdispquote}[2]{\itshape}
 \nc{\Spec}{{\mathop{\operatorname{\rm Spec}}}}
  \nc{\spec}{{\mathop{\operatorname{\rm Spec}}}}
\title{Beyond the Fontaine-Wintenberger theorem}
\author{Franziska Jahnke and Konstantinos Kartas}
\thanks{During this research, F.J. 
was supported by the Deutsche Forschungsgemeinschaft (DFG, German Research Foundation) under Germany's Excellence Strategy EXC 2044-390685587, Mathematics M\"unster: Dynamics-Geometry-Structure, as well as by a Fellowship from the Daimler and Benz Foundation. K.K. received funding from the European Union's Horizon 2020 research and innovation programme under the Marie Sk\l odowska-Curie grant agreement No 101034255\worldflag[width=1.5mm, length=1.5mm]{EU} Both F.J. and K.K were also supported by the program GeoMod ANR-19-CE40-0022-01 (ANR-DFG)}
\begin{document}

\maketitle

\begin{abstract}
Given a perfectoid field, we find an elementary extension and a henselian defectless valuation on it, whose value group is divisible and whose residue field is an elementary extension of the tilt. This specializes to the almost purity theorem over perfectoid valuation rings and Fontaine-Wintenberger.  Along the way, we prove an Ax-Kochen/Ershov principle for certain deeply ramified fields, which also uncovers some new model-theoretic phenomena in positive characteristic. Notably, we get that the perfect hull of $\F_p(t)^h$ is an elementary substructure of the perfect hull of $\F_p(\!(t)\!)$. 
\end{abstract}
\setcounter{tocdepth}{1}
\tableofcontents
\pagebreak

\section{Introduction}
The local function field analogy, i.e., the analogy between the field of $p$-adic numbers $\Q_p$ and the field of Laurent series $\F_p(\!(t)\!)$, has been highly influential in arithmetic. Historically, there have been two important justifications of this philosophy, which were also used to transfer certain results from one domain to the other. Both methods are of asymptotic nature but are substantially different from one another (cf. pg. 2 \cite{ScholzeICM}). 
\subsection{The method of letting $p\to \infty$} \label{aketranfser}
The idea here is that $\Q_p$ and $\F_p(\!(t)\!)$ become more and more similar as $p$ goes to infinity. This is made precise by the work of Ax-Kochen \cite{AK12} and Ershov \cite{Ershov}, which shows that for any given sentence $\phi$ in the language of rings, there is $N=N(\phi)\in \N$ such that 
$$\phi \mbox{ holds in }\Q_p \iff \phi \mbox{ holds in } \F_p(\!(t)\!)$$ 
for any $p>N$. The crucial ingredient behind this, is that the theory of a henselian valued field of residue characteristic zero is fully determined by the theories of its residue field and value group. This is now known to hold in more general settings and has become a guiding principle in the model theory of valued fields. Using this as a black box, one easily checks that 
$$\prod_{p\in \mathbb{P}} \Q_p /U \equiv \prod_{p\in \mathbb{P}} \F_p(\!(t)\!)/U $$
for any non-principal ultrafilter $U$ on the set $\mathbb{P}$ of prime numbers. Then, the asymptotic transfer principle which was stated above follows routinely. Later, this principle was generalized in a motivic framework by work of Denef-Loeser \cite{DL} and Cluckers-Loeser \cite{CluckLoes}, allowing an asymptotic comparison between measures of definable sets. 

This transfer principle between $\Q_p$ and $\F_p(\!(t)\!)$ enabled Ax-Kochen to resolve a conjecture by Artin, at least in its asymptotic form. Artin conjectured that $\Q_p$ is a $C_2$ field, i.e., that every homogeneous polynomial of degree $d$ in more than $d^2$ variables will have a non-zero solution over $\Q_p$. The corresponding statement was already known for $\F_p(\!(t)\!)$ by a theorem of Lang. Using the above transfer principle, Ax-Kochen proved an asymptotic version of Artin's conjecture.
Namely, for every $d\in \N$, Artin's conjecture holds for homogeneous polynomials of degree $d$ in $\Q_p$ with $p$ sufficiently large.
We note that $\Q_p$ is actually not $C_2$, as was demonstrated by Terjanian in \cite{Terj1} for $p=2$ and \cite{Terj2} for general $p$. 
\subsection{The method of letting $e\to \infty$}  \label{methodperf}
The second method is the one which will be of interest to us throughout the paper. Now fix $p$ and instead replace the fields $\Q_p$ and $\F_p(\!(t)\!)$ with highly ramified finite extensions thereof. This has the effect of making the similarities between the two fields even stronger, a phenomenon which was first observed by Krasner \cite{Krasner}. 
This method works especially well once we pass to infinite wildly ramified extensions. For instance, consider the extensions $\Q_p(p^{1/p^{\infty}})$ and $\F_p(\!(t)\!)(t^{1/p^{\infty}})$, obtained by adjoining $p$-power roots of $p$ and $t$ respectively.
Then one has the celebrated theorem of Fontaine-Wintenberger \cite{FontWint}, saying that the absolute Galois groups of those two fields are isomorphic. This was vastly generalized by Scholze \cite{Scholze} within the framework of perfectoid fields and spaces. 

A \textit{perfectoid field} is a non-discrete complete real-valued field $K$ of residue characteristic $p>0$ such that $\Oo_K/(p)$ is \textit{semi-perfect}, i.e., the Frobenius map $\Phi:\Oo_K/(p)\to \Oo_K/(p):x\mapsto x^p$ is surjective. 
Given a perfectoid field $K$, one can define its \textit{tilt} $K^{\flat}$, a perfectoid field of positive characteristic which is in many respects similar to $K$. We note that tilting is not \quotes{injective}, meaning that there are several perfectoid fields of characteristic zero which tilt to the same perfectoid field of characteristic $p$. These are parametrized by a regular noetherian scheme of dimension one, called the Fargues-Fontaine curve. Thus, in order to untilt, one also needs to specify a parameter. This parameter is a certain Witt vector $\xi_K \in W(\Oo_{K^{\flat}})$, only well-defined up to a unit. The valuation ring of the corresponding untilt is simply $\Oo_K=W(\Oo_{K^{\flat}})/(\xi_K)$.

Even though tilting involves some loss of information, a lot can be learned about $K$ from studying $K^{\flat}$. For instance, one has a generalized Fontaine-Wintenberger theorem, saying that the absolute Galois groups of $K$ and $K^{\flat}$ are isomorphic. This version was stated and proved by Scholze \cite{Scholze} but also follows from parallel work of Kedlaya-Liu \cite{KedlayaLiu}.
Most strikingly, Scholze obtained an appropriate geometric generalization of the above arithmetic statement within the context of perfectoid spaces. Several arithmetic applications have been found since; see \cite{ScholzeICM} for a survey and also \cite{PerfBook} for a more up-to-date account. 

\subsection{Motivating question} \label{motivatingquestion}
Given how successful the method of transferring information between $K$ and $K^{\flat}$ has been, one can ask:
\bq \label{questionformal}
How are $K$ and $K^{\flat}$ related model-theoretically? More specifically:
\begin{enumerate}[label=(\roman*)]
\item To what extent does the theory of $K^{\flat}$ determine the theory of $K$? 
\item How close are $K$ and $K^{\flat}$ to being elementary equivalent? 
\end{enumerate}
\eq 
This question was posed by M. Morrow at a conference talk in 2018 but was more or less folklore before. Some answers have been given since:
\begin{itemize}
\item The perfectoid fields $K$ and $K^{\flat}$ are bi-interpretable within the framework of continuous logic. This was an early formative (yet unpublished) observation by Rideau-Scanlon-Simon. We emphasize that their interpretation of $K$ in $K^{\flat}$ requires parameters for the Witt vector coordinates of $\xi_K \in W(\Oo_{K^{\flat}})$.

\item The situation becomes subtle within the framework of ordinary first-order logic and parameters play a prominent role. For instance, one can prove that $K$ is \quotes{in practice} decidable relative to $K^{\flat}$. More precisely, this is true if $\xi_K \in W(\Oo_{K^{\flat}})$ is \textit{computable}, meaning that an algorithm exists which prints one by one the Witt vector coordinates of $\xi_K$. This is the main theorem in \cite{KK1} and was used to obtain some new decidability results in mixed characteristic.
\end{itemize}
Even though $K$ and $K^{\flat}$ have different characteristics, we naturally expect a large set of sentences to be transferrable. For instance, sentences encoding Galois theoretic information certainly are, as $K$ and $K^{\flat}$ have isomorphic absolute Galois groups.\footnote{Recall that the elementary theory of the absolute Galois group---in the formalism of Cherlin-van den Dries-Macintyre \cite{CDM}---is encoded in the language of rings.} We provide a new perspective on tilting , illustrating how it enables the transfer of first-order information between $K$ and $K^{\flat}$.
%
\subsection{Main theorem}
By passing to an elementary extension of $K$, we find a
well-behaved valuation, whose residue field is an elementary
extension of $K^{\flat}$:
\begin{Theorem}[see Theorem \ref{modeltheoreticFont}]\label{modeltheoreticFont1}
Let $(K,v)$ be a perfectoid field and $\varpi\in \mathfrak{m}\backslash \{0\}$. Let $U$ be a non-principal ultrafilter on $\N$ and $(K_U,v_U)$ be the corresponding ultrapower. Let $w$ be the coarsest coarsening of $v_U$ such that $w\varpi>0$. Then: 
\begin{enumerate}[label=(\Roman*)]
\item Every finite extension of $(K_U,w)$ is unramified.

\item The tilt $(K^{\flat},v^{\flat})$ embeds elementarily into $(k_w,\overline{v})$, where $\overline{v}$ is the induced valuation of $v_U$ on $k_w$.
\end{enumerate} 
Moreover, the isomorphism $G_{K_U}\cong G_{k_w}$ restricts to $G_K\cong G_{K^{\flat}}$.
\end{Theorem}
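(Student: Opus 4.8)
The plan is to push the correspondence behind (I) through finite extensions and match it, layer by layer, with tilting. Write $\Phi\colon G_{K_U}\xrightarrow{\sim}G_{k_w}$ for the isomorphism furnished by (I): since $(K_U,w)$ is henselian and every finite extension of it is unramified, the rule $L\mapsto k_{w_L}$ is a degree-preserving bijection from finite extensions of $K_U$ onto finite separable extensions of $k_w$, compatible with composita and with reduction of automorphisms, and $\Phi$ is the induced map on absolute Galois groups (for compatible choices of separable closures). I would first record the auxiliary facts that $\mathrm{char}\,k_w=p$ (as $w\varpi>0$ while $v(p)$ and $v(\varpi)$ lie in the same archimedean class of the real-valued $v$, so $wp>0$) and that $k_w$ is perfect (Frobenius is onto $\mathcal O_{K_U}/p$ because $K$ is perfectoid, and $p\in\mathfrak m_w$). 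The diagonal embedding $K\preceq K_U$ and, by (II), the embedding $K^{\flat}\preceq k_w$ are elementary, hence render the smaller field relatively algebraically closed in the larger; restricting automorphisms then gives surjections $G_{K_U}\twoheadrightarrow G_K$ and $G_{k_w}\twoheadrightarrow G_{K^{\flat}}$ which realize $G_K$ and $G_{K^{\flat}}$ as the inverse limits, over finite Galois $L_0/K$, of $\mathrm{Gal}(L_0K_U/K_U)$ and $\mathrm{Gal}(L_0^{\flat}k_w/k_w)$. So it is enough to show that for each such $L_0$ the isomorphism $\Phi$ induces an isomorphism $\mathrm{Gal}(L_0K_U/K_U)\xrightarrow{\sim}\mathrm{Gal}(L_0^{\flat}k_w/k_w)$ which, under the canonical identifications $\mathrm{Gal}(L_0K_U/K_U)=\mathrm{Gal}(L_0/K)$ and $\mathrm{Gal}(L_0^{\flat}k_w/k_w)=\mathrm{Gal}(L_0^{\flat}/K^{\flat})$, is the tilting isomorphism; passing to the limit over $L_0$ then exhibits $\Phi$ as descending to the Fontaine--Wintenberger/Scholze isomorphism.

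Fix a finite Galois $L_0/K$ (automatically separable, $K$ being perfect) and set $L:=L_0K_U$. Then $L_0$ is again a perfectoid field, $\varpi\in\mathfrak m_{L_0}$, and---$v$ being real-valued on $L_0$---the coarsening $w_{L_0}$ attached to $(L_0,v_{L_0},\varpi)$ as in the theorem restricts to $w$ on $K_U$ (the defining convex subgroups match up); since $[L_0:K]<\infty$ we have $L\cong(L_0)_U$ as valued extensions of $(K_U,w)$, so $w_{L_0}$ is the unique extension $w_L$ of $w$ to $L$. Applying the already-proven (II) to $L_0$ yields an elementary embedding $L_0^{\flat}\preceq k_{w_L}$ refining $K^{\flat}\preceq k_w$. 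Now $[k_{w_L}:k_w]=[L:K_U]=[L_0:K]$ by (I) (no defect and $e=1$), $[L_0^{\flat}:K^{\flat}]=[L_0:K]$ because tilting preserves degrees, and $L_0^{\flat}\cap k_w=K^{\flat}$ as $K^{\flat}$ is relatively algebraically closed in $k_w$; since $L_0^{\flat}k_w\subseteq k_{w_L}$, comparing degrees forces $L_0^{\flat}\cdot k_w=k_{w_L}$. Hence $\Phi$ does induce an isomorphism $\mathrm{Gal}(L_0K_U/K_U)\to\mathrm{Gal}(L_0^{\flat}k_w/k_w)$.

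The one point that is not a formality---and the step I expect to be the main obstacle---is to identify this isomorphism with tilting: to check that the reduction map $\mathcal O_{w_L}\to k_{w_L}$, restricted to $L_0^{\flat}\subseteq k_{w_L}$, intertwines the natural $\mathrm{Gal}(L_0/K)$-action on $L_0$ with the $\mathrm{Gal}(L_0/K)$-action on $L_0^{\flat}$ coming from $\varprojlim_{x\mapsto x^p}$. The construction underlying (II) is assembled from tilt/Witt-vector data and is therefore patently Galois-equivariant, but making the comparison precise---keeping track, in particular, of how approximate $\varpi$-power roots in $L_0$ turn into genuine elements of $k_{w_L}$---is where the real work lies. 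Granting it, and using that the Fontaine--Wintenberger/Scholze isomorphism is characterized by exactly this compatibility on finite Galois layers, passing to the limit over $L_0$ completes the proof; everything else is Hensel's lemma, \L o\'s's theorem, and routine valuation theory.
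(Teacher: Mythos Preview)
Your proposal addresses only the ``moreover'' clause, taking (I) and (II) as established. The argument as written is circular at the key step. You invoke ``$[L_0^{\flat}:K^{\flat}]=[L_0:K]$ because tilting preserves degrees,'' and you assert that $G_{K^\flat}$ is realized as the inverse limit of $\mathrm{Gal}(L_0^{\flat}k_w/k_w)$ over finite Galois $L_0/K$. Both statements presuppose the tilting equivalence on finite extensions: the first is degree preservation, the second is essential surjectivity (that every finite extension of $K^\flat$ arises as some $L_0^\flat$). Together these \emph{are} the Fontaine--Wintenberger theorem, which the ``moreover'' part is meant to derive from (I) and (II) rather than assume. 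Without them your argument yields only the inclusion $L_0^\flat k_w \subseteq k_{w_L}$ and hence at best a surjection $G_K \twoheadrightarrow \mathrm{Gal}\bigl(\bigcup_{L_0} L_0^\flat\,/\,K^\flat\bigr)$, not an isomorphism onto $G_{K^\flat}$. One could try to recover degree preservation from almost purity over $\Oo_K$, which the paper does derive from (I) in \S\ref{almostpuritysec}, but that is a substantial detour through almost ring theory that you do not carry out.

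The paper's proof (Corollary~\ref{idenitfyingGK}) avoids the circularity by working directly with generating polynomials. Given a monic irreducible $f\in K^\flat[X]$, one forms $f^\natural\in K_U[X]$ via the $\natural$-map and shows, by a Hensel--Rychlik argument (Lemma~\ref{p-throots}), that the extensions of $K$ generated by the components $f_i$ of $f^\natural=\ulim f_i$ eventually stabilize; hence $f^\natural$ generates an extension of $K_U$ coming from $K$. This supplies precisely the direction (extensions of $K^\flat$ $\rightsquigarrow$ extensions of $K$) your argument is missing, and the converse direction is handled symmetrically. Incidentally, the step you flag as the ``main obstacle''---Galois equivariance of the embedding $L_0^\flat\hookrightarrow k_{w_L}$---is in fact routine: the map $\natural$ is built out of the sharp map and ultralimits, and $\mathrm{Gal}(L_0/K)$ commutes with both. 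The genuine obstacle is the degree and surjectivity issue you pass over.
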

By a finite unramified extension of henselian fields, we mean one which induces a separable residue field extension of the same degree.
Part (I) equivalently says that the valuation $w$ is henselian defectless with divisible value group and perfect residue field. In particular, it is a \textit{tame} valuation in the sense of Kuhlmann \cite{Kuhl}. The fact that the value group is of infinite (uncountable) rank may be somewhat intimidating. We note that this can easily be remedied by replacing the ultrapower with a bounded variant and then $w$ becomes a spherically complete valuation with value group $\mathbb{R}$ (cf. \S \ref{rank1version}). 

Tame valuations are by and large well-understood by work of Kuhlmann, not only from the point of view of valuation theory but also model-theoretically. In \S \ref{aketranfser}, we mentioned that the theory of a henselian valued field is often determined by the theories of its residue field and value group. Unfortunately, this is not quite true for tame fields of mixed characteristic. There, the situation is a bit subtle and goes hand in hand with the fact that the theory of $K^{\flat}$ does not fully determine the theory of $K$ (cf. Remark \ref{tameakerem}). Still, knowing the theory of $k_w$ (equivalently of $K^{\flat}$) tells us a lot about the theory of $K_U$ (equivalently of $K$), see  Remark \ref{transferproperties}.

Next, we discuss the two parts of the main theorem:
\subsection{Part I} \label{PARTI}
The fact that every finite extension of $(K_U,w)$ is unramified will not come as a surprise to experts. It is a non-standard version of the \textit{almost purity} theorem over perfectoid valuation rings, as we will explain in \S \ref{almostmathintro}. 
Interestingly, our non-standard version allows for a significantly shorter proof in mixed characteristic compared to existing proofs.  We outline the proof in each case.
\subsubsection{Positive characteristic} \label{poschar}
This is essentially identical with the (easy) proof of almost purity in $\text{char}(K)=p>0$. The key observation is that over a perfect field of characteristic $p$, an irreducible polynomial 
$$f(X)=X^n+a_{n-1}X^{n-1}+...+a_0$$ 
generates the same extension as 
$$f_m(X)=X^n+a_{n-1}^{1/p^m}X^{n-1}+...+a_0^{1/p^m}$$ 
for any $m\in \N$. Looking at discriminants, we get that $\text{disc}(f_m)=\text{disc}(f)^{1/p^m}$ and therefore
$$v(\text{disc}(f_m))=1/p^m\cdot v(\text{disc}(f))$$ 
This procedure allows us to generate a given finite extension of $K$ by a monic irreducible polynomial with discriminant of arbitrarily small valuation. As for $K_U$, we may even form the limit polynomial $f_*(X)=\ulim f_m(X)$. By \L o\'s, this still generates the same extension and now $\text{disc}(f_*)\in \Oo_w^{\times}$. The reduction of $f_*(X)$ is therefore separable---also irreducible by Hensel's Lemma---and the extension is unramified. The case of a general extension of $K_U$ is similar.
\subsubsection{Characteristic zero} \label{char0}
We need to show that $w$ is henselian, with divisible value group, perfect residue field and that there is no defect. The first three conditions are more or less clear. We only remark that the residue field $k_w=\text{Frac}(\Oo_{\overline{v}})$ is perfect because the semi-perfect ring $(\Oo_K/(p))_U$ surjects onto $\Oo_{\overline{v}}$. The only non-trivial thing to show is that $\Oo_w$ is defectless. 

Here, we utilize the method of standard decomposition, a classical tool in the model theory of valued fields. Consider the following diagram of places 
$$K_U\stackrel{v_0} \longrightarrow K_U^{\circ} \stackrel{\overline{w}} \longrightarrow k_{w}\longrightarrow k_{v_U} $$ 
where $v_0$ is the finest coarsening of $v_U$ of residue characteristic $0$. We write $K_U^{\circ}$ for the residue field with respect to $v_0$, often called the \textit{core field}, and $\overline{w}$ for the induced valuation of $w$ on $K_U^{\circ}$, which is of rank $1$. 
One knows that---as a consequence of $\aleph_1$-saturation---the valued field $(K_U^{\circ},\overline{w})$ is spherically complete. This means that the intersection of any decreasing sequence of balls is nonempty and it implies that $\overline{w}$ is defectless.  
The valuation $v_0$ is defectless, being of residue characteristic $0$. We conclude that $w$ is also defectless, being the composite of such valuations. 
\subsubsection{ } \label{generalcase}
In \S \ref{tamingperfectoids}, we work with more general valued fields. We consider the class of henselian valued fields  (not necessarily of rank $1$) $(K,v)$ of residue characteristic $p>0$ with $\Oo_K/(p)$ semi-perfect, together with a distinguished $\varpi\in \mathfrak{m}\backslash \{0\}$ such that $\Oo_K[\varpi^{-1}]$ is algebraically maximal. This class clearly includes perfectoid fields and, unlike perfectoid fields, forms an elementary class. In this generality, one gets a slightly weaker conclusion for the coarsening $w$, namely that it is \textit{roughly tame}. 
In \S \ref{modeltheoryoftame}, we show that Kuhlmann's results also hold in the broader context of roughly tame fields.
This method of \quotes{taming the valuation} enables us to prove a new Ax-Kochen/Ershov principle, which is the crucial ingredient for Part II but which is also of independent interest. Before discussing this, we wish to clarify the connection with the almost purity theorem which was hinted at above.


%

\subsection{Almost mathematics} \label{almostmathintro}

Part I of our main theorem is a non-standard version of the almost purity theorem over perfectoid valuation rings:
\begin{Theorem} [Almost purity] \label{tategr}
Let $K$ be a perfectoid field and $K'/K$ be a finite extension. Then $K'/K$ is almost unramified. More precisely, the extension $\Oo_{K'}/\Oo_K$ is almost finite \'etale. 
\end{Theorem}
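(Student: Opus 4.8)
The plan is to reduce to equal characteristic $p$ via tilting and to settle that case by hand. Since $K$ is perfectoid, the finite extension $K'/K$ is automatically separable, hence $K'$ is a finite \'etale $K$-algebra which happens to be a field; the assertion is that the valuation ring $\Oo_{K'}^a$ is an almost finite \'etale $\Oo_K^a$-algebra, where ``almost'' is taken with respect to the maximal ideal $\mathfrak m=\mathfrak m_K$ (which is idempotent because $v$ is non-discrete). I would dispose of the equal-characteristic case first and then transport the statement through the tilt.

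\emph{Equal characteristic $p$.} Here a perfectoid field is just a perfect complete non-discretely real-valued field $K$, and any finite $L/K$ is again perfect, so $\Oo_L$ is a perfect ring; consequently $\Omega^1_{\Oo_L/\Oo_K}$ vanishes outright, since $dx=p\,(x^{1/p})^{p-1}\,d(x^{1/p})=0$ for every $x\in\Oo_L$. Moreover $\Oo_L/\Oo_K$ is flat, being torsion-free over a valuation ring, so it remains only to show that $\Oo_L$ is almost finitely presented and almost projective over $\Oo_K$. For this I would take a primitive element $\theta\in\Oo_L$ with monic minimal polynomial $g\in\Oo_K[X]$ of degree $n=[L:K]$ and, using perfectness of $K$, replace $\theta$ by $\theta^{1/p^m}$: it still generates $L$, and its minimal polynomial $g_m$ (whose coefficients are $p^m$-th roots of those of $g$) satisfies $g_m(X)^{p^m}=g(X^{p^m})$, so that $\mathrm{disc}(g_m)=\mathrm{disc}(g)^{1/p^m}$. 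The free rank-$n$ submodule $P_m:=\Oo_K[\theta^{1/p^m}]\subseteq\Oo_L$ has cokernel annihilated by $g_m'(\theta^{1/p^m})$, whose valuation equals $v(\mathrm{disc}\,g)/(np^m)$ and hence tends to $0$; thus for every $a\in\mathfrak m_K$ one has $a\,\Oo_L\subseteq P_m$ for $m\gg 0$, and the inclusion $P_m\hookrightarrow\Oo_L$ together with multiplication $\Oo_L\xrightarrow{a}P_m$ (the two composites being multiplication by $a$) realizes $\Oo_L$ as $\mathfrak m_K$-almost finitely presented and almost projective. Hence $\Oo_L^a/\Oo_K^a$ is almost finite \'etale.

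\emph{Descent through the tilt, and where the work is.} Fix a pseudo-uniformizer $\varpi$; then $\Oo_K^a$ is $\varpi$-adically complete and $\varpi$-henselian with $\Oo_K^a/\varpi\cong\Oo_{K^\flat}^a/\varpi^\flat$, and the deformation theory of (almost-)\'etale algebras upgrades this to an equivalence between finite \'etale $\Oo_K^a$-algebras and finite \'etale $\Oo_{K^\flat}^a$-algebras. Given $K'/K$ finite, the equal-characteristic case produces a finite \'etale $\Oo_{K^\flat}^a$-algebra $\Oo_{L^\flat}^a$ for a suitable finite $L^\flat/K^\flat$, and one transports it to a finite \'etale $\Oo_K^a$-algebra $B$; the crux is then to identify $B$ with $\Oo_{K'}^a$, using that $B$ is almost normal (finite \'etale over the almost-normal $\Oo_K^a$), that $B[1/\varpi]$ is the untilt of $L^\flat$, and hence that $B$ is the almost integral closure of $\Oo_K$ in $K'$. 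This matching step is the main obstacle, and it is delicate precisely because $\Oo_{K'}$ is in general \emph{not} module-finite over $\Oo_K$ (finite extensions of perfectoid fields are deeply ramified), so the comparison must be carried out entirely in the almost category rather than by counting generators. Alternatively, in the present paper the conclusion can be read off Theorem~\ref{modeltheoreticFont1}(I): writing $K'=K(\alpha)$, that theorem makes $\Oo_{w'}/\Oo_w$ finite \'etale on the ultrapower, and since $\Oo_w$ is the localization of $\Oo_{v_U}$ at the prime cut out by $H$, descending Kähler differentials and the finiteness conditions along this localization -- and then restricting from $K_U$ down to $K$, where $\Gamma_v$ is archimedean so no positive element is infinitesimal relative to $w\varpi$ -- recovers the almost statement over $\Oo_K$; there the analogous obstacle is this faithful descent from the coarsening $w$ back to $v$.
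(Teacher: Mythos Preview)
Your equal-characteristic argument is correct and essentially the one the paper sketches in \S\ref{poschar}. For mixed characteristic, however, neither of your two outlines is carried to completion, and the paper's route is different from both in its executed form.

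Your first route (lift the char-$p$ almost purity through the tilting equivalence on almost finite \'etale algebras) is Scholze's original proof, not the paper's. You correctly flag that the hard point is identifying the transported algebra $B$ with $\Oo_{K'}^a$; you do not resolve it, and indeed that identification is exactly where Scholze needs the full perfectoid machinery.

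Your second route (read the result off the ultrapower coarsening) is the paper's approach, but the ``faithful descent from the coarsening $w$ back to $v$'' you flag as the obstacle is not achieved by ``descending K\"ahler differentials'' or by any kind of restriction from $K_U$ to $K$ (also, the ``$H$'' you mention is undefined). The paper's mechanism is categorical: one observes that $\Oo_w=S^{-1}(\Oo_K)_U$ and $\Oo_{w'}=S^{-1}(\Oo_{K'})_U$ for $S$ the multiplicative set of elements of infinitesimal valuation, and proves (Proposition~\ref{embeddingcat}, Corollary~\ref{preservationrem}) that
\[
S^{-1}({-})_U:\ \Oo_K^a\lmod\ \longrightarrow\ \Oo_w\lmod,\qquad M^a\mapsto S^{-1}M_U,
\]
is an exact faithful embedding which, on uniformly almost finitely generated modules, preserves tensor products. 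This yields a direct dictionary (Theorem~\ref{dictionary}): $\Oo_{K'}^a$ is uniformly finite projective (resp.\ \'etale) over $\Oo_K^a$ iff $\Oo_{w'}$ is finite projective (resp.\ \'etale) over $\Oo_w$. Since Theorem~\ref{nonstdtategr}(ii) makes $\Oo_{w'}/\Oo_w$ standard \'etale (hence free of rank $[K':K]$), the dictionary immediately gives that $\Oo_{K'}^a/\Oo_K^a$ is finite \'etale. No separate treatment of characteristic $p$ is needed, and K\"ahler differentials are never used.
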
 
The notion \quotes{almost \'etale} is made precise in almost mathematics, introduced by Faltings \cite{Faltings} and developed systematically in the work of Gabber-Ramero \cite{GR}. 
We note that almost purity holds more generally over perfectoid $K$-algebras, see \cite{Faltings} and \cite{Scholze}.
The version over perfectoid valuation rings stated here is often attributed to Tate \cite{Tate} and Gabber-Ramero \cite{GR} (cf. Theorem 2.4 \cite{ScholzeSurvey}). We first explain some of the key ideas in almost mathematics and then clarify their model-theoretic content.



\subsubsection{ } \label{examplealmost}
For a concise summary, see \S 4 \cite{Scholze}. We start with an easy example of an almost \'etale extension. Fix $p\neq 2$ and let $K$ be the $p$-adic completion of $\Q_p(p^{1/p^{\infty}})$ and $K'=K(p^{1/2})$. Classically, $K'/K$ is a totally ramified extension. However, $\Oo_{K'}/\Oo_K$ is in some sense very close to being \'etale. To see this, let us focus at a finite level. Consider the local fields $K_n=\Q_p(p^{1/p^n})$ and $K_n'=K_n(p^{1/2})$ and note that $\Oo_{K'_n}=\Oo_{K_n}[p^{1/2p^n}]$. To see if $\Oo_{K'_n}/\Oo_{K_n}$ is \'etale, one can compute the module of K\"ahler differentials and check whether it is zero. Although this is not quite the case here, we have that the module is killed by $p^{1/p^n}$, i.e.,
$$p^{1/p^n}\cdot \Omega_{\Oo_{K_n'}/\Oo_{K_n}}=0$$
For large $n$, the element $p^{1/p^n}$ is very close to being a unit and $\Omega_{\Oo_{K_n'}/\Oo_{K_n}}$ is thus close to being zero. As for the module $\Omega_{\Oo_{K'/K}}$, we get that $\mathfrak{m}\cdot \Omega_{\Oo_{K'}/\Oo_K}=0$ by passing to direct limits. In almost mathematics, such a module is called \textit{almost zero} and must be ignored. 
We now outline the general setup which makes all this precise.

Let $K$ be a rank-$1$ valued field with a non-discrete valuation (e.g., a perfectoid field) and $\Oo_K$ denote its valuation ring. We write $\Oo_K \lmod$ for the category of $\Oo_K$-modules. In order to ignore $\mathfrak{m}$-torsion, we pass to the Serre (or Gabriel) quotient 
$$\Oo_K^a\lmod =\Oo_K\lmod /(\mathfrak{m}\text{-} \mbox{torsion modules})$$ 
The resulting category $\Oo_K^a\lmod$ is called the category of \textit{almost modules}. It comes with a natural \textit{almostification} (or localization) functor 
$$\Oo_K\lmod\to \Oo_K^a\lmod: M\mapsto M^a$$
To understand the nature of almost modules, we make a few observations. Note that if $\mathfrak{m}\cdot M=0$, then certainly $M\otimes K=0$. Therefore, the functor $\Oo_K\lmod \to K\lmod:M\mapsto M\otimes K$ factors naturally through $\Oo_K^a\lmod$ as in the diagram below
\[
\begin{tikzcd}[column sep=4.5em]
\Oo_K \lmod\arrow{rr}{M\mapsto M \otimes K} \arrow{dr}{M\mapsto M^a} && K\lmod \\
& \Oo_K^a\lmod \arrow{ur}{M^a \mapsto M \otimes K} 
\end{tikzcd}
\]
Thus, given an $\Oo_K$-module $M$, the object $M^a$ can be viewed as a slightly generic (or almost integral) structure, sitting between the integral structure $M$ and its generic fiber $M\otimes K$. However, as there are no intermediate rings $\Oo_K\subsetneq R \subsetneq K$, the object $M^a$ does not actually arise from a localization of the base ring $\Oo_K$. There is not even an underlying set of the object $M^a$. It will still be useful to think of $M^a$ as an algebraic object, but the formal development of the theory will inevitably be abstract.

One then proceeds to develop ring theory within $\Oo_K^a\lmod $ in a hybrid fashion, half-algebraic and half-categorical. 
After some work, one arrives at the notion of an \'etale extension in the almost context. 
An important theme in almost mathematics is that properties over the generic fiber extend to the almost integral level (see \S 4 \cite{Scholze}). This is reflected in the almost purity theorem; \'etaleness at the generic level implies \'etaleness at the almost integral level.
\subsubsection{ }
To see how model theory enters the picture, let us return to the example of the quadratic extension $K'/K$ from \S \ref{examplealmost}. In that situation, it would have really been convenient to take the \quotes{limit} of $p^{1/p^n} $ as $n\to \infty$ and produce some kind of unit element. We cannot make sense of that limit topologically but we can model-theoretically. Write $K_U$ and $K_U'$ for the corresponding ultrapowers. We can now form the ultralimit $\pi=\ulim p^{1/p^n}$ and use \L o\'s to write $K'_U=K_U(\pi^{1/2})$.  Ramification still persists in $K_U'/K_U$ but is now detected only at the infinitesimal level. To make it disappear, we just need lower resolution lenses: Let $w$ (resp. $w'$) be the coarsest coarsenings with $wp >0$ (resp. $w'p>0$). This has the effect of collapsing infinitesimal valuations to zero; e.g., we have that $w\pi=0$. One easily checks that $\Oo_{w'}=\Oo_w[\pi^{1/2}]$ and hence $\Oo_{w'}/\Oo_w$ is an honest finite \'etale extension. 
We spell out the details in Example \ref{sqrootexmplefg}.
Next, we outline how this fits into a bigger picture. 

Let $S\subseteq \Oo_{v_U}$ be the set of elements of \textit{infinitesimal} valuation, i.e., smaller than any rational multiple of $v\varpi$. Given an $\Oo_K$-module $M$, it is not hard to check that 
$$\mathfrak{m}\cdot M=0\iff S^{-1}M_U=0$$ 
The module $S^{-1}M_U$ is a slightly generic fiber of the ultrapower $M_U$, now in a literal sense. 
This yields an exact faithful monoidal functor of tensor abelian categories
$$S^{-1}({-})_U:\Oo_K^a\lmod\longrightarrow \Oo_w\lmod:M^a\mapsto S^{-1}M_U $$
Crucially, it also preserves tensor products under some finite type assumption on the almost modules being tensored. 
After writing this paper, O. Gabber informed us that he recently used a similar construction to simplify a step in the proof of the direct summand conjecture (see Remark \ref{Gabberscons}).  

With the functor $S^{-1}({-})_U$, we achieve a systematic translation from almost ring theory to usual ring theory. However, we believe that this is not just a matter of phrasing things in a different language. Having actual algebraic objects to work with enables us to see some new phenomena. Such is the case of Part II, which we discuss next.
\subsection{Part II: Embedding the tilt in the residue field} \label{partII}
\subsubsection{ }
We first describe the embedding of $K^{\flat}$ in $k_w$ (see \S \ref{elementarysubfield}). Since $K^{\flat}=\varprojlim_{x\mapsto x^p}K$, we have a natural embedding of multiplicative monoids
$$\natural: K^{\flat} \to K_U: (x,x^{1/p},...)\mapsto \ulim x^{1/p^n}$$
It is also additive modulo $p$ and the image is contained in $\Oo_w^{\times}$. Composing it with the reduction map of $w$, gives us a valued field embedding $\iota:(K^{\flat},v^{\flat})\hookrightarrow (k_w,\overline{v})$. Moreover, the induced embedding $\Oo_{K^{\flat}}/(t) \to \Oo_{\overline{v}}/(\iota(t))$ is \textit{essentially} a diagonal embedding into an ultrapower (i.e., modulo some non-standard Frobenius twist) and is elementary by \L o\'s.
\subsubsection{ } \label{elembeddingintro}
In order to show that $\iota:(K^{\flat},v^{\flat})\hookrightarrow (k_w,\overline{v})$ is elementary, we prove an Ax-Kochen/Ershov principle:
\bt \label{perfprecversion}
Let $(K,v)\subseteq (K',v')$ be two henselian valued fields of residue characteristic $p>0$ such that $\Oo_v/(p)$ and $\Oo_{v'}/(p)$ are semi-perfect. Suppose there is $\varpi \in \mathfrak{m}_v\backslash \{0\}$ such that both $\Oo_v[\varpi^{-1}]$ and $\Oo_{v'}[\varpi^{-1}]$ are algebraically maximal. Then: 
$$ (K,v)\preceq (K',v') \iff \Oo_v/(\varpi )\preceq \Oo_{v'}/(\varpi) \mbox{ and }\Gamma_v \preceq \Gamma_{v'}$$
Moreover, if $\Gamma_v$ and $\Gamma_{v'}$ are regularly dense, then the value group condition on the right hand side can be omitted. 
\et 
We refer the reader to \S 5, which also contains analogous Ax-Kochen/Ershov principles for elementary equivalence (over a base) and existential closedness (see \S \ref{akesec}). All proofs go via a coarsening argument, namely by \quotes{taming} the valuations as outlined in \S \ref{generalcase} and thereby reducing to a setting where Kuhlmann's theory applies. These new Ax-Kochen/Ershov principles are also of independent interest, as they uncover some new model-theoretic phenomena, outlined below.
\subsubsection{Other applications} \label{otherappintro}
It is an open problem whether $\F_p(t)^h$ is an elementary substructure of $\F_p(\!(t)\!)$ and, alas, the theory of $\F_p(\!(t)\!)$ remains unknown. In \S \ref{newaxkochenphenomena}, we obtain the following \quotes{perfected} variant:
\bc \label{corolfp(t)}
The perfect hull of $\F_p(t)^h$ is an elementary substructure of the perfect hull of $\F_p(\!(t)\!)$.
\ec 
Unfortunately, the theory of the perfect hull of $\F_p(\!(t)\!)$ still appears to be intractable. By \quotes{taming} the valuation, the difficulties do not disappear but rather get absorbed in the truncated valuation ring $\Oo_v/(t)$. Still, for certain concrete situations---like Corollary \ref{corolfp(t)}---one may directly compare the two truncated valuation rings and see that they are equal on the nose. Other Ax-Kochen/Ershov phenomena also become accessible in this fashion and are collected in \S \ref{newaxkochenphenomena}.

As another application, we obtain a converse to the decidability transfer theorem of \cite{KK1}: 
\bc 
The $L_{\text{val}}$-theory of $K^{\flat}$ is decidable relative to the one of $K$.
\ec

Finally, we show that tilting respects elementary equivalence over a base:
\bc \label{subcomperf}
Let $K_1,K_2$ be two perfectoid fields extending a perfectoid field $K$. Then: 
$$(K_1,v_1) \equiv_{(K,v)} (K_2,v_2) \iff (K_1^{\flat},v_1^{\flat}) \equiv_{(K^{\flat},v^{\flat})} (K_2^{\flat},v_2^{\flat}) $$
\ec 
We stress that working over a base perfectoid field is essential, as there are numerous examples of perfectoid fields which all have the same tilt but which are not elementary equivalent. This is reminiscent of the fact that tilting yields an equivalence between the categories of perfectoid field extensions of $K$ and those of $K^{\flat}$, but not between the categories of perfectoid fields of characteristic zero and those of characteristic $p$.
%

\newpage
\addtocontents{toc}{\protect\setcounter{tocdepth}{0}}
\section*{Notation}

\begin{itemize}
\item If $(K,v)$ is a valued field, we denote by $\mathcal{O}_v$ the valuation ring and by $\mathfrak{m}_v$ the maximal ideal. If the valuation is clear from the context, we shall also write $\mathcal{O}_K$ for the valuation ring. When both the valuation and the field are clear from the context, we will write $\mathfrak{m}$ for the maximal ideal.
\item We write $\Gamma_v$ for the value group and $k_v$ for the residue field. If the valuation $v$ is clear from the context, we also denote them by $\Gamma$ and $k$ respectively. Given $x\in \Oo_v$, we write $\overline{x}$ for the image of $x$ in $k_v$ via the residue map of $v$. 
\item We write $L_{\text{rings}}=\{0,1,+,\cdot\}$ for the language of rings, $L_{\text{oag}}=\{0,+,<\}$ for the language of ordered abelian groups. 
\item We write $L_{\text{val}}$ for the language of valued fields, construed as a three-sorted language with a sort for the valued field in the language $L_{\text{rings}}$, a sort for the value group in the language $L_{\text{oag}}$, a sort for the residue field in the language $L_{\text{rings}}$ and function symbols for the valuation and residue maps. We also let $L_{\text{val}}(\varpi)=L_{\text{val}}\cup \{\varpi\}$, where $\varpi$ is a constant symbol whose intended interpretation will be a specified element in the valued field.

\item If $M$ is an $L$-structure and $A\subseteq M$ is an arbitrary subset, we write $L(A)$ for the language $L$ enriched with a constant symbol $c_a$ for each element $a \in A$. The $L$-structure $M$ can be updated into an $L(A)$-structure in the obvious way.

\item  Given $L$-structures $M$ and $N$ with a common substructure $A$, we use the notation $M\equiv_A N$ to mean that the structures $M$ and $N$ are elementary equivalent in $L(A)$.

\item Given $L$-structures $M$ and $N$ with $M\subseteq N$, we write $M\preceq N$ to mean that $M$ is an elementary substructure of $N$ and $M\preceq_{\exists} N$ to mean that $M$ is existentially closed in $N$.

\item Given a ring $R$ with nilradical $N=\text{Nil}(R)$, we write $R_{\text{red}}=R/N$ for the associated reduced ring. We write $R-$\text{f\'et} for the category of finite \'etale extensions of $R$.

\end{itemize} 
\addtocontents{toc}{\protect\setcounter{tocdepth}{1}}
\section{Preliminaries}

\subsection{Ultraproducts} \label{ultrasection}
Fix an ultrafilter $U$ on some index set $I$. Given a language $L$ and a sequence $(M_i)_{i\in I}$ of $L$-structures, we let $\prod_{i\in I} M_i/U$ be the associated \textit{ultraproduct}. The underlying set of $\prod_{i\in I} M_i/U$ is the quotient $\{(a_i)_{i\in I}:a_i\in M_i\}/\sim$, where $(a_i)_{i\in I} \sim (b_i)_{i\in I}$ if and only if $\{i\in I:a_i=b_i\}\in U$. We will write $\ulim a_i$ for the equivalence class of $(a_i)_{i\in I}$ in the ultraproduct. The ultraproduct $\prod_{i\in I} M_i/U$ readily becomes an $L$-structure. We will often use the more succinct notation $\ulim M_i$ for the ultraproduct. In case $M_i=M$, the structure $\prod_{i\in I} M/U$ is called the \textit{ultrapower} of $M$ along $U$ and is denoted by $M_U$. 
\subsubsection{\L o\'s' Theorem}
If the $M_i$'s are groups (resp. rings, fields, valued fields etc.), then so is the ultraproduct. More generally, \L o\'s' Theorem ensures that any elementary property which holds for \textit{almost all} of the $M_i$'s will also hold in the ultraproduct (and vice versa): 
\begin{fact} [\L o\'s' Theorem] \label{los}
Fix $n\in \N$ and choose $a_i\in M_i^n$ for each $i\in I$. For any $L$-formula $\phi(x)$ with $x=(x_1,...,x_n)$, one has that 
$$\{i\in I: M_i \models \phi(a_i)\}\in U\iff \prod_{i\in I} M_i/U\models \phi (\ulim a_i)$$
\end{fact}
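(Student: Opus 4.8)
The plan is to prove the equivalence by induction on the structure of the $L$-formula $\phi$ (allowing the arity of the free-variable tuple to vary as one descends under quantifiers), after first disposing of the analogous statement for terms. Throughout, write $M:=\prod_{i\in I}M_i/U$, and for a formula $\psi$ set $A_\psi:=\{i\in I: M_i\models\psi(a_i)\}$, so that the assertion to be established is $A_\phi\in U\iff M\models\phi(\ulim a_i)$.

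First I would check, by a routine induction on term complexity, that $t^M(\ulim a_i)=\ulim\big(t^{M_i}(a_i)\big)$ for every $L$-term $t$: the base cases (a variable, a constant symbol) are exactly how $M$ interprets coordinates and constants, and the step for $f(t_1,\dots,t_k)$ holds because $f^M$ is computed coordinatewise modulo $U$. Feeding this into the atomic case, for $\phi$ of the form $t_1=t_2$ the desired equivalence unwinds to the definition of the equivalence relation $\sim$ on $\prod_{i}M_i$, and for $\phi$ of the form $R(t_1,\dots,t_k)$ it unwinds to the definition of the relation $R^M$ (a tuple of classes satisfies $R^M$ precisely when a $U$-large set of representatives satisfies $R^{M_i}$). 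So the atomic case is immediate.

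For the inductive step, I would handle the Boolean connectives via the (ultra)filter axioms: the conjunction case follows from $A_{\psi_1\wedge\psi_2}=A_{\psi_1}\cap A_{\psi_2}$ together with closure of $U$ under finite intersections and under supersets, while the negation case follows from $A_{\neg\psi}=I\setminus A_{\psi}$ together with the defining ultrafilter property that exactly one of a set and its complement lies in $U$; the other connectives reduce to these two. For the existential quantifier $\phi(x)=\exists y\,\psi(x,y)$: if $A_\phi\in U$, pick for each $i\in A_\phi$ some $b_i\in M_i$ with $M_i\models\psi(a_i,b_i)$ and let $b_i\in M_i$ be arbitrary otherwise; then the inductive hypothesis applied to $\psi$ gives $M\models\psi(\ulim a_i,\ulim b_i)$, hence $M\models\phi(\ulim a_i)$. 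Conversely, a witness $\ulim b_i$ for $y$ in $M$ yields, again by the inductive hypothesis, $\{i: M_i\models\psi(a_i,b_i)\}\in U$, and since this set is contained in $A_\phi$ we conclude $A_\phi\in U$. The universal quantifier is dual, or one rewrites $\forall y$ as $\neg\exists y\,\neg$ and invokes the cases already proved.

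The genuine content is concentrated in exactly two of these steps, and those are where I expect the only real care to be needed. The negation step is where being an \emph{ultrafilter}, not merely a filter, is essential: for a general filter one would obtain only the implication $A_\phi\in U\Rightarrow M\models\phi(\ulim a_i)$, that is, the statement that $M$ is a reduced product rather than the two-sided equivalence. The existential step is where the Axiom of Choice enters, used to assemble a single coherent witnessing sequence $(b_i)_{i\in I}$ out of the witnesses found at each index. Everything else is bookkeeping with the definitions of the ultraproduct structure and the filter axioms.
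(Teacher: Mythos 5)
Your proof is the standard textbook argument (as in Chang--Keisler, which the paper cites) and is correct in all its steps: the term lemma, the atomic and Boolean cases, the use of the ultrafilter dichotomy for negation, and the Axiom of Choice for assembling witnesses in the existential case. The paper itself records \L o\'s' Theorem as a Fact without proof, so there is no in-paper argument to compare against; your writeup supplies exactly the argument the paper implicitly delegates to the standard reference.
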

We identify $M$ as a substructure of $M_U$ via the diagonal embedding 
$$\delta:M\to M_U: m\mapsto \ulim m$$
which is elementary by \L o\'s' Theorem.
\subsubsection{Saturation}
We refer the reader to \S 6.1 \cite{changkeisler} for a detailed discussion on ultraproducts and saturation. 
\begin{definition}
Fix an infinite cardinal $\kappa$. Given an $L$-structure $M$, we say that $M$ is $\kappa$\textit{-saturated} if for any subset $A\subseteq M$ with $|A| < \kappa$, every collection $(D_{\alpha})_{\alpha \in I}$ of $A$-definable sets having the finite intersection property has non-empty intersection. We say that $M$ is \textit{saturated} if it is $|M|$-saturated.
\end{definition}
\begin{fact} [Theorem 5.1.13 \cite{changkeisler}] \label{uniquenessofsat}
Let $M$ and $M'$ be two elementary equivalent saturated $L$-structures of the same size. Then $M\cong M'$.
\end{fact}
\begin{rem}
\begin{enumerate}[label=(\roman*)]
\item By Lemma 5.1.4 \cite{changkeisler}, it is always possible to achieve $\kappa$-saturation by passing to a suitable elementary extension. 

\item Contrary to (i), the existence of saturated models is not guaranteed in general without further background set-theoretic assumptions (GCH). It is nevertheless harmless to assume it for certain tasks at hand and later remove it by an absoluteness argument, cf. \cite{HaleviKaplan}.  
\end{enumerate}
\end{rem}
If $L$ is countable and $\kappa=\aleph_1$, then $M$ is $\aleph_1$-saturated precisely when every decreasing chain of non-empty definable subsets with parameters from $M$, say
$$D_1\supseteq D_2 \supseteq...$$ 
has non-empty intersection. Structures in everyday mathematical practice often fail to be $\aleph_1$-saturated: 
\begin{example}
The ordered set $(\N,<)$ is not $\aleph_1$-saturated because one can take $D_n=\{x\in \N: x>n\}$ and clearly $\bigcap_{n\in \N} D_n = \emptyset$. 
\end{example}
On the other hand, non-principal ultraproducts are always $\aleph_1$-saturated: 
\begin{fact} [Theorem 6.1.1 \cite{changkeisler}] 
Let $(M_n)_{n\in \N}$ be a sequence of $L$-structures and $U$ be a non-principal ultrafilter on $\N$. Then the ultraproduct $\ulim M_i$ is $\aleph_1$-saturated.
\end{fact}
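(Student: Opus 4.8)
The plan is to use the reformulation of $\aleph_1$-saturation recalled just above the statement — namely, for countable $L$, that it suffices to check that every decreasing chain $D_0\supseteq D_1\supseteq\cdots$ of non-empty definable subsets of $M_U$ (with parameters from $M_U$) has non-empty intersection — and then to build an element of the intersection coordinate by coordinate, with non-principality of $U$ entering only at the end. All languages considered in this paper are countable, so I work under that assumption.

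Accordingly, write $D_k=\{c\in M_U:M_U\models\varphi_k(c,\bar a_k)\}$ with $\varphi_k\in\text{Form}_L$ and $\bar a_k=\ulim\bar a_k^{(i)}$ a tuple from $M_U$, where $\bar a_k^{(i)}\in M_i$. After replacing $\varphi_k$ by $\varphi_0\wedge\dots\wedge\varphi_k$ we may keep the chain decreasing while arranging that each $D_k\ne\emptyset$, i.e. $M_U\models\exists x\,\varphi_k(x,\bar a_k)$ for all $k$. By \L o\'s' Theorem the sets $X_k:=\{i\in\N:M_i\models\exists x\,\varphi_k(x,\bar a_k^{(i)})\}$ all lie in $U$, and after intersecting finitely many at a time we may arrange $X_0\supseteq X_1\supseteq\cdots$.

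The heart of the argument is a diagonalization over the index set $\N$. For $i\in\N$ put $n(i):=\max\{k\le i:i\in X_k\}$, with the convention $n(i):=-1$ when $i\notin X_0$. When $n(i)\ge0$, choose $c_i\in M_i$ with $M_i\models\varphi_{n(i)}(c_i,\bar a_{n(i)}^{(i)})$; since the $\varphi_k$'s form a decreasing chain this forces $M_i\models\varphi_k(c_i,\bar a_k^{(i)})$ for every $k\le n(i)$. For the remaining indices $i$ let $c_i\in M_i$ be arbitrary, and set $c:=\ulim c_i\in M_U$.

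It remains to verify $c\in\bigcap_k D_k$, and this is where non-principality of $U$ is used. Fix $k$. Since $U$ is non-principal on $\N$ it contains the cofinite set $\{i\in\N:i\ge k\}$, so $X_k\cap\{i:i\ge k\}\in U$; for any $i$ in this set we have $i\in X_k$ and $k\le i$, hence $n(i)\ge k$, and therefore $M_i\models\varphi_k(c_i,\bar a_k^{(i)})$. Thus $\{i\in\N:M_i\models\varphi_k(c_i,\bar a_k^{(i)})\}\in U$, and \L o\'s' Theorem yields $M_U\models\varphi_k(c,\bar a_k)$, i.e. $c\in D_k$. As $k$ was arbitrary, $c\in\bigcap_k D_k$, which completes the argument. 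The one point that needs care is the design of the diagonal index $n(i)$: it must grow fast enough that $\{i:n(i)\ge k\}\in U$ for every fixed $k$ — and this is exactly what the fact that non-principal ultrafilters on $\N$ contain all cofinite sets delivers.
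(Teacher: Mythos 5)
Your proof is correct. The paper gives no argument for this Fact---it simply cites Theorem 6.1.1 of Chang--Keisler---and your diagonalization (replacing $\varphi_k$ by the conjunction $\varphi_0\wedge\dots\wedge\varphi_k$, choosing $c_i$ to satisfy the longest feasible conjunction, and using non-principality only to get the cofinite sets $\{i\in\N: i\ge k\}\in U$ so that $n(i)\ge k$ on a set in $U$) is precisely the standard proof from that source, specialized to non-principal ultrafilters on $\N$; your restriction to countable $L$ matches both the paper's framing of $\aleph_1$-saturation via decreasing chains and the hypothesis of the cited theorem.
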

In particular, it is possible to achieve $\aleph_1$-saturation by passing to a non-principal ultrapower, something which will be used extensively throughout. 
\subsubsection{Some ultracommutative algebra}
Let $I$ be an index set and $(R_i)_{i\in I}$ be a sequence of rings.  We work in the product category 
$$\prod_{i\in I} (R_i\lmod)$$ 
whose objects are sequences $(M_i)_{i\in I}$, where $M_i$ is  an $R_i$-module, and where morphisms are given as follows
$$\Hom((M_i)_{i\in I},(N_i)_{i\in I})=\prod_{i\in I} \Hom(M_i,N_i)$$
\begin{rem}
The following observations are clear:
\begin{enumerate}[label=(\roman*)]
\item The category $\prod (R_i\lmod)$ is an abelian tensor category, where kernels, cokernels and tensor products are defined in the unique way compatible with their definition in $R_i\lmod$. For instance, we have
$$(M_i)_{i\in I}\otimes (N_i)_{i\in I}= (M_i\otimes N_i)_{i\in I}$$
\item If $R_i=R$, we have a diagonal functor
$$\Delta: R\lmod \to (R\lmod)^I:M\mapsto (M,M,...)$$
which is an exact faithful functor which preserves tensor products.
\end{enumerate}
\end{rem}

\begin{definition}
Let $(R_i)_{i\in I}$ be a sequence of rings. For each $i\in I$, let $M_i$ be an $R_i$-module.
\begin{enumerate}[label=(\roman*)]
\item The sequence $(M_i)_{i\in I}$ is called \textit{uniformly finitely generated} if there exists $m\in \N$ such that each $M_i$ can be generated by $m$ elements as an $R_i$-module. 
\item The sequence $(M_i)_{i\in I}$ is called \textit{uniformly finitely presented} if there exist $m,n\in \N$ such that for each $M_i$ there exists a short exact sequence
$$0\longrightarrow R_i^m \stackrel{\phi_i} \longrightarrow R_i^n \stackrel{\psi_i} \longrightarrow M_i \longrightarrow 0 $$ 
for some maps $\phi_i$ and $\psi_i$.
\end{enumerate}

\bl \label{ultramodules}
Let $I$ be a set and $U$ be an ultrafilter on $I$. Let $(R_i)_{i\in I}$ be a sequence of rings and $R=\ulim R_i$. Consider the assignment 
$$\mathcal{U}: \prod_{i\in I} (R_i\lmod) \to R\lmod: (M_i)_{i\in I}\mapsto \ulim M_i$$
acting on morphisms in the obvious way. Then:
\begin{enumerate}[label=(\roman*)]
\item $\mathcal{U}$ is an exact functor. 

\item $\mathcal{U}$ is monoidal, i.e., for any $R_i$-modules $M_i,N_i$ we have a natural morphism 
$$F:\ulim M_i\otimes_{R} \ulim N_i \to \ulim (M_i\otimes_{R_i} N_i)$$ 
sending $\ulim a_i\otimes \ulim b_i\mapsto \ulim  (a_i\otimes b_i)$.
\item The map $F$ from (ii) is an isomorphism if $(M_i)_{i\in I}$ and $(N_i)_{i\in I}$ are uniformly finitely generated.
\end{enumerate}
\el 
\begin{proof}
(i) It is clear that $\mathcal{U}$ is a functor. For exactness, note that for a sequence of maps $f_i:M_i\to N_i$ of $R_i$-module we have $\text{Ker}(\ulim f_i)=\ulim \text{Ker}(f_i)$ (resp. $\text{Im}(\ulim f_i)=\ulim \text{Im}(f_i)$) by \L o\'s.  \\
(ii) This is Lemma 3.1 \cite{Dietz}; we briefly recall the argument. Consider the bilinear map
$$\ulim M_i\times \ulim N_i \to \ulim  (M_i\otimes_{R_i} N_i): (\ulim a_i, \ulim b_i)\mapsto \ulim (a_i\otimes b_i) $$
By the universal property for tensor products, this naturally induces a morphism $F$ of $R$-modules
$$F:\ulim M_i\otimes_{R} \ulim N_i\to  \ulim (M_i\otimes_{R_i} N_i) $$ 
mapping $\ulim a_i\otimes \ulim b_i\mapsto \ulim  (a_i\otimes b_i)$ and extending additively to all of $\ulim M_i\otimes_{R} \ulim N_i$. \\
(iii) Let $M_i=\langle f_{1,i},...,f_{s,i} \rangle $ and $N_i=\langle g_{1,i},...,g_{s,i}\rangle$. 
Set $f_j=\ulim f_{j,i}$ and $g_k=\ulim g_{k,i}$. Suppose that $a_1,...,a_s\in \ulim M_i$ are such that
$$0=F( \sum_{j=1}^s a_j\otimes g_j)=\ulim \sum_{j=1}^s a_{j,i}\otimes g_{j,i}$$ 
where $a_j=\ulim a_{j,i}$. For almost all $i$, we will then have that $\sum_{j=1}^s a_{j,i}\otimes g_{j,i}=0$. By Tag 04VX \cite{sp}, this gives $c_{kj,i} \in R$ such that
$$a_{j,i}= \sum_{k=1}^s c_{kj,i} f_{k,i} \mbox{ and } \sum_{j=1}^s c_{kj,i} g_{j,i}=0 $$
for almost all $i$. Let $c_{kj}=\ulim c_{kj,i}$ and note that
$$a_j= \sum_{k=1}^s c_{kj} f_k \mbox{ and } \sum_{j=1}^s c_{kj} g_j=0 $$
This implies that $\sum_{j=1}^s a_j\otimes g_j=0$, which means that $F$ is injective.  To see that $F$ is also surjective, observe that $\ulim (M_i\otimes_{R_i} N_i)=\langle \ulim f_{j,i}\otimes g_{k,i}: 1\leq j,k \leq s\rangle$ and that $F(f_j\otimes g_k)=\ulim f_{j,i}\otimes g_{k,i}$. 
\end{proof}

\subsection{Ordered abelian groups} \label{oagsection}
Throughout the paper, we view ordered abelian groups as $L_{\text{oag}}$-structures, where $L_{\text{oag}}=\{0,+,<\}$. We will need a special class of ordered abelian groups which was studied by Robinson-Zakon \cite{Rob}:
\begin{definition} 
An ordered abelian group $\Gamma$ is called \textit{regularly dense} if for any given $\gamma_1<\gamma_2$ and $n\in \N$, there is $\gamma\in \Gamma$ such that $n\cdot \gamma\in (\gamma_1,\gamma_2)$. 
\end{definition}
The typical example is a dense additive subgroup of $\mathbb{R}$. In fact, any regularly dense ordered abelian group is elementary equivalent to such a subgroup. Regularly dense ordered abelian groups are readily seen to be \textit{almost divisible}, i.e., if $\{0\}\subsetneq \Delta\subseteq \Gamma$ is any proper convex subgroup, then $\Gamma/\Delta$ is divisible.

For later use, we record the following:
\bl \label{valuegrouplemma}
Suppose $\Gamma \preceq_{\exists} \Gamma'$ for two ordered abelian groups and $\gamma \in \Gamma$. Suppose $\Gamma$ is $\aleph_1$-saturated and let $\Delta$ (resp. $\Delta'$) be the maximal convex subgroup of $\Gamma$ (resp. $\Gamma'$) not containing $\gamma$. Then $\Gamma/\Delta\preceq_{\exists} \Gamma'/\Delta'$.
\el 
\begin{proof}
It suffices to show that any system of linear equations and inequalities with parameters from $\Gamma/\Delta$, which has a solution in $\Gamma'/\Delta'$, also has a solution in $\Gamma/\Delta$.
Let $A_1,A_2 \in M_{n\times n}(\Z)$ and $\gamma_1,\gamma_2\in \Gamma^n$. Suppose $\gamma' \in \Gamma'^n$ is such that 
$$A_1 \cdot \gamma' +\gamma_1 \in \Delta'^n \mbox{ and }A_2\cdot \gamma' +\gamma_2 > \Delta'^n$$ 
The former condition means that $A_1\cdot \gamma' +\gamma_1 \in  (-\frac{1}{m}\gamma, \frac{1}{m}\gamma)^n$ for all $m\in \N$. The latter condition means that there exists $M\in \N$ such that $A_2\cdot \gamma' +\gamma_2  \in (\frac{1}{M} \gamma,\infty)^n$. Since $\Gamma \preceq_{\exists} \Gamma'$, for each $m\in \N$ we can find $\gamma_m \in \Gamma^n$ such that 
$$A_1\cdot \gamma_m +\gamma_1 \in  (-\frac{1}{m}\gamma, \frac{1}{m}\gamma)^n\mbox{ and }A_2\cdot \gamma_m +\gamma_2  \in (\frac{1}{M} \gamma,\infty)^n$$ 
Since $\Gamma$ is $\aleph_1$-saturated, there is $\gamma_0\in \Gamma^n$ satisfying the above conditions simultaneously for all $m\in \N$. In particular, this means that 
$$A_1\cdot \gamma_0 +\gamma_1 \in \Delta' \mbox{ and }A_2\cdot \gamma_0 +\gamma_2 > \Delta'$$
which is what we wanted to show.
\end{proof}
\subsection{Coarsenings of valuations}
Our exposition follows closely \S 7.4 \cite{vdd}. Let $(K,v)$ be a valued field with valuation ring $\Oo_v$, value group $\Gamma_v$ and residue field $k_v$. A \textit{coarsening} $w$ of $v$ is a valuation on $K$ such that $\Oo_v\subseteq \Oo_w$. 

\subsubsection{Coarsenings and convex subgroups}
Recall the usual 1-1 correspondence between coarsenings of $v$ and convex subgroups of $\Gamma_v$: 
\begin{itemize}
\item Given a convex subgroup $\Delta \subseteq \Gamma_v$, we define a new valuation 
$$w:K^{\times} \to \Gamma_v/\Delta:x\mapsto vx+\Delta$$ 
One readily verifies that $\Oo_w=\{x\in K: vx> \delta \mbox{ for some }\delta \in \Delta\}$ and $\mathfrak{m}_w=\{x\in K: vx> \Delta\}$.
\item Conversely, a coarsening $w$ of $v$ gives rise to the convex subgroup $\Delta=\{vx: x\in K \mbox{ and }wx=0\}$.
\end{itemize}

\subsubsection{Induced valuation}
Given a coarsening $w$ of $v$, we define the \textit{induced} valuation $\overline{v}$ of $v$ on $k_w$ as follows
$$\overline{v}:k_w^{\times} \to \Delta: x + \mathfrak{m}_w \mapsto vx$$ 
Note that $\Oo_{\overline{v}}=\Oo_v/\mathfrak{m}_w$ and $\mathfrak{m}_{\overline{v}}=\mathfrak{m}_v/\mathfrak{m}_w$. We thus get an isomorphism $\Oo_{\overline{v}}/\mathfrak{m}_{\overline{v}} \cong \Oo_v/\mathfrak{m}_v$ which allows us to identify $k_{\overline{v}}$ with $k_v$. Let $P_v:K\to k_v$ be the place associated to $v$ (similarly for the other valuations). From the above discussion, we have that $P_v=P_{\overline{v}} \circ P_w$, as in the diagram below 
\[ 
  \begin{tikzcd}[column sep=4.5em]
    K \arrow[r, "w"] \arrow[rr, bend left, "v"] & k_{w} \arrow[r, "\overline{v}"] & k_v  
     \end{tikzcd}
\]
By abuse of notation, we also say that $v$ is the \textit{composition} of $w$ and $\overline{v}$ and write $v=w\circ \overline{v}$. 

Given a ring $R$ with nilradical $N=\text{Nil}(R)$, we write $R_{\text{red}}=R/N$.
\bl \label{reducedring}
Let $(K,v)$ be a valued field and $I\subseteq \mathfrak{m}_v$. Let $w$ be the coarsest coarsening of $v$ such that $w \varpi >0$ for all $\varpi \in I$. We then have that $\Oo_{\overline{v}}=(\Oo_v/I)_{\text{red}}$.
\el 
\begin{proof}
By construction, we have that $\mathfrak{m}_w=\{x\in \Oo_v: x^n\in I \mbox{ for some }n\in \N\}=\sqrt{ I}$. It follows that $\Oo_{\overline{v}}=\Oo_v/\mathfrak{m}_w=\Oo_v/\sqrt{I}=(\Oo_v/I)_{\text{red}}$.
\end{proof}

\subsubsection{Standard decomposition}
Let $(K,v)$ be an $\aleph_1$-saturated valued field and $\varpi \in \mathfrak{m}_v$. Let $w$ be the coarsest coarsening of $v$ such that $w\varpi >0$ and $w^+$ be the finest coarsening of $v$ with $w^+\varpi=0$ (i.e., $\Oo_{w^+}=\Oo_v[\varpi^{-1}]$).  This gives rise to the \textit{standard decomposition with respect to }$\varpi$ below
\[
  \begin{tikzcd}
    K \arrow[r, "\Gamma_v/\Delta^+"] & k_{w^+} \arrow[r, "\Delta^+/\Delta"] & k_w \arrow[r, "\Delta"] & k_v  
     \end{tikzcd}
\]
where each arrow is labelled with the corresponding value group. Here, $\Delta^+$ is the minimal convex subgroup with $v\varpi \in \Delta^+$ and $\Delta$ is the maximal convex subgroup with $v\varpi \notin \Delta$. The quotient $\Delta^+/\Delta$ is an ordered abelian group of rank $1$. 
\begin{rem} \label{corefield}
Classically, one takes $(K,v)$ of mixed characteristic $(0,p)$ and $\varpi=p$. In that case, we have that $w^+=v_0$ is the finest coarsening of residue characteristic $0$ and $w$ is the coarsest coarsening of residue characteristic $p$. This leads to the standard decomposition (with respect to $p$)
\[
  \begin{tikzcd}
    K \arrow[r, "\Gamma_v/\Delta_0"] & K^{\circ}\arrow[r, "\Delta_0/\Delta"] & k_w \arrow[r, "\Delta"] & k_v  
     \end{tikzcd}
\]
where $K^{\circ}$ denotes the residue field of $v_0$ and is sometimes called the \textit{core field}. The induced valuation of $v$ on $K^{\circ}$ will be denoted by $v^{\circ}$.
\end{rem}
\subsubsection{Saturation and spherical completeness}
Recall that a \textit{spherically complete} field is a valued field of rank $1$ such that for any decreasing sequence of balls, say 
$$B_1\supseteq B_2 \supseteq...$$ 
their intersection is non-empty.
The following is well-known but we sketch a proof for completeness. 
\bl [cf. Theorem 1.13 \cite{KuhlAns}]\label{sphericalcompletelemma}
Let $(K,v)$ be an $\aleph_1$-saturated valued field and $\varpi \in \mathfrak{m}_v$. Let $w$ be the coarsest coarsening of $v$ such that $w\varpi >0$ and $w^+$ be the finest coarsening of $v$ with $w^+\varpi=0$. Then the valued field $(k_{w^+},\overline{w})$ is spherically complete with value group isomorphic to either $\Z$ or $\mathbb{R}$. 
\el 
\begin{proof}
Let $B_1\supseteq B_2 \supseteq ...$ be a decreasing chain of closed balls in $(k_{w^+},\overline{w})$. We need to show that $\bigcap_{n\in \N} B_n \neq \emptyset$. For each $i\in \N$, we lift the center of $B_i$ to obtain a ball $\tilde{B}_i$ in $(K,w)$ with center in $\Oo_w$ and the same radius as $B_i$. We may assume that $\tilde{B}_i\subseteq \Oo_w$ by replacing $\tilde{B}_i$ with $\tilde{B}_i\cap \Oo_w$ if necessary. We then lift the radius of $\tilde{B}_i$ (i.e., choose a representative of the corresponding element in $\Gamma_v/\Delta_0$)  to obtain a ball $B_i'$ in $(K,v)$ with the same center. This yields a countable decreasing chain of closed balls $B_1'\supseteq B_2' \supseteq ...$ in $(K,v)$. These balls are (nonempty) definable sets and  by $\aleph_1$-saturation we have $\bigcap_{n\in \N} B_i' \neq \emptyset$. But clearly $B_i'\subseteq \tilde{B}_i$ and therefore we can find some $a\in \bigcap_{n\in \N} \tilde{B}_i$. Since $\tilde{B}_i\subseteq \Oo_w$, we get $ a\in \Oo_w$ and thus $b \in \bigcap_{n\in \N} \tilde{B}_i \neq \emptyset$, where $b=a\mod \mathfrak{m}_{w^+}$.
For the value group assertion, we have that $\Gamma_{\overline{w}}$ is either discrete or dense, being an additive subgroup of $\mathbb{R}$. In the latter case, we also get that every Dedekind cut is realized in $\Gamma_{\overline{w}}$ from the $\aleph_1$-saturation of $\Gamma_v$. Thus,  $\Gamma_{\overline{w}}=\mathbb{R}$.
\end{proof}
%
\subsection{Fundamental inequality}
We restrict ourselves to \textit{henselian} fields, see \S 3.3 \cite{engprest} for a more general treatment. Given a henselian valued field $(K,v)$ and a finite extension $K'/K$ of degree $n$, let $v'$ be the unique extension of $v$ to $K'$. We write $\Gamma_v$ (resp. $\Gamma_{v'}$) and $k_v$ (resp. $k_{v'}$) for the value groups and residue fields. We then have the \textit{fundamental inequality}
$$n\geq  e(v'/v)\cdot f(v'/v) $$
where $e(v'/v)=[\Gamma_{v'}:\Gamma_v]$ is the \textit{ramification degree} and $f(v'/v)=[k_{v'}:k_v]$ is the \textit{inertia degree}. 
\subsubsection{Defect}
The multplicative error in the fundamental inequality, namely 
$$d(v'/v)=\frac{n}{e(v'/v)\cdot f(v'/v)}$$ 
is called the \textit{defect} of the extension. By the Lemma of Ostrowski, the defect is always a power of the characteristic exponent of $k_v$. In particular, it is equal to $1$ in case $\text{char}(k_v)=0$. We say that $(K',v')/(K,v)$ (or $\Oo_{v'}/\Oo_v$) is \textit{defectless} if $d(v'/v)=1$. Otherwise we say that $(K',v')/(K,v)$ (or $\Oo_{v'}/\Oo_v$) is a defect extension. The valued field $(K,v)$ is called \textit{defectless} if any finite extension as above is defectless. In that case, we also say that $\Oo_v$ (or just $v$) is defectless.
\begin{fact}  \label{sphericaldefect}
Spherically complete fields are henselian and defectless.
\end{fact}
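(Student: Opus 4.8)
The plan is to prove the two assertions in the stated order, deriving both from the correspondence between spherical completeness and the absence of ``gaps'' in the field. So fix a spherically complete valued field $(K,v)$ of rank one.

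I would obtain henselianity via Newton's method. Suppose $f\in\Oo_v[X]$ and $a\in\Oo_v$ satisfy $v(f(a))>2\gamma$ with $\gamma:=v(f'(a))$, and set $a_0=a$ and $a_{n+1}=a_n-f(a_n)/f'(a_n)$. Expanding $f$ and $f'$ around $a_n$ --- the Taylor coefficients lie in $\Oo_v$ --- yields the usual estimates $v(a_{n+1}-a_n)=v(f(a_n))-\gamma$, $v(f'(a_{n+1}))=\gamma$ and $v(f(a_{n+1}))\geq 2\bigl(v(f(a_n))-\gamma\bigr)$; since $\varepsilon:=v(f(a_0))-2\gamma>0$ and $\Gamma_v$ is archimedean, induction gives $v(f(a_n))\geq v(f(a_0))+n\varepsilon$, so $v(f(a_n))$ and $\rho_n:=v(a_{n+1}-a_n)$ are cofinal in $\Gamma_v$ and $\rho_n$ is strictly increasing. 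The closed balls $B_n:=\{x\in K:v(x-a_n)\geq\rho_n\}$ form a decreasing sequence (as $a_{n+1}\in B_n$ and the radii grow), so spherical completeness supplies $b\in\bigcap_nB_n$. Since $v(b-a_n)\geq\rho_n$ is cofinal, expanding $f(b)$ around $a_n$ shows $v(f(b))>g$ for every $g\in\Gamma_v$, hence $f(b)=0$; and $v(b-a)\geq\rho_0>\gamma$. This is exactly Hensel's Lemma, so $(K,v)$ is henselian. (Alternatively one may invoke the classical fact that maximally complete fields are henselian, but the Newton argument is self-contained.)

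For defectlessness I would first recall the classical fact that a spherically complete rank-one field is \emph{maximal}, i.e.\ has no proper immediate valued-field extension: a proper immediate extension $K\subsetneq L$ produces, by Kaplansky's construction, a pseudo-Cauchy sequence in $K$ whose pseudo-limits meet $L\setminus K$ but not $K$; since $\Gamma_v$ has countable cofinality, one passes to a cofinal $\omega$-subsequence and gets a strictly decreasing sequence of balls of $K$ with empty intersection, contradicting spherical completeness. Now suppose toward a contradiction that some finite extension $(K',v')/(K,v)$ has $d(v'/v)=p^{k}>1$, where $p=\text{char}(k_v)>0$. Using that $(K,v)$ is henselian (just proved) and that the defect is multiplicative in towers, the structure theory of defect extensions over henselian fields (see \cite{Kuhl}) provides an intermediate field $K\subseteq L\subseteq K'$ with $[L:K]=p$ and nontrivial defect; by the fundamental inequality this forces $e(L/K)=f(L/K)=1$, so $L/K$ is a proper immediate extension --- contradicting maximality. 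Hence $(K,v)$ is defectless.

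The only genuinely nonformal ingredient is the reduction used in the last step: extracting from an arbitrary finite defect extension of a henselian field a degree-$p$, hence immediate, defect subextension. This is a known structural result about the defect; a self-contained proof would pass to a normal closure, descend into the $p$-group $\mathrm{Gal}(N/N^{r})$ over the ramification field $N^{r}$, and use multiplicativity of the defect to isolate a degree-$p$ layer on which the defect survives. Everything else is routine --- Taylor expansions and the ultrametric inequality for henselianity, and the standard dictionary between pseudo-Cauchy sequences and nested balls for maximality.
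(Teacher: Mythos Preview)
The paper gives no proof of its own here; it simply defers to Theorem~11.27 of Kuhlmann's book. So the only question is whether your argument stands on its own.

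Your henselianity proof via Newton iteration is correct.

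The defectlessness step has a genuine gap. You correctly pass from spherical completeness to maximality, but the reduction you invoke --- an intermediate $K\subseteq L\subseteq K'$ with $[L:K]=p$ and $L/K$ defect --- is not a theorem in \cite{Kuhl} and is in fact false in general: Kuhlmann has exhibited henselian \emph{algebraically maximal} fields that are not defectless, so no argument that only consumes the nonexistence of immediate algebraic extensions of $K$ itself can succeed. Your normal-closure sketch illustrates the problem rather than solving it: descending through the $p$-group $\mathrm{Gal}(N/N^{r})$ isolates a degree-$p$ immediate layer $M'/M$, but with $M$ a finite extension of $K$, not $M=K$. Concluding would require $M$ to be maximal, which you have not established (and which, proved directly, is essentially the content of the result you want).

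The standard route uses the full maximality of $K$ on the finite-dimensional $K$-vector space $L$: one shows that the $ef$ products $b_i\pi_j$ (lifts of a residue basis times value-group coset representatives) span $L$, by approximating an arbitrary $x\in L$ and observing that the coefficients of the successive approximants form pseudo-Cauchy sequences \emph{in $K$}, which then pseudo-converge by maximality of $K$. This is the argument behind the reference the paper cites.
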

\begin{proof}
See Proposition 15, pg. 163 \cite{bosch}.
\end{proof}
\subsubsection{Immediate extensions}
An important special case of defect extensions are the immediate ones. We say that $(K',v')/(K,v)$ (or $\Oo_{v'}/\Oo_v$) as above is an \textit{immediate} extension if  $e(v'/v)= f(v'/v) =1$. Note that if $K'/K$ is a proper extension, then $(K',v')/(K,v)$ is indeed a defect extension. We say that $(K,v)$ (or $\Oo_v$, or $v$) is \textit{algebraically maximal} if $(K,v)$ does not admit any proper finite immediate extensions.
\subsubsection{Unramified extensions} \label{unramext}
There are several equivalent definitions of \quotes{unramified} in the literature, which we briefly review below.
\begin{definition}
A finite extension of henselian fields $(K',v')/(K,v)$ of degree $n$ is \textit{unramified} if $n=f(v'/v)$ and $k_{v'}/k_v$ is separable. 
\end{definition}
We also say that $K'/K$ is \textit{unramified with respect to} $\Oo_v$. We note that our definition of \quotes{unramified} is not universally accepted, namely some sources do not require the extension to be defectless (e.g., \cite{Endler}) while others do (e.g., \cite{kuhlmann2010defect}). 

Under our henselianity assumption, this is compatible with the notion of $\Oo_{v'}/\Oo_{v}$ being unramified in the sense of commutative algebra: \begin{definition}[Tag 00US \cite{sp})] 
A ring map $R\to S$ is unramified if $S$ is a finitely generated $R$-algebra and $\Omega_{S/R}=0$. We also say that $S/R$ is unramified.
\end{definition}
Given a field $K$ and $a\in \overline{K}$ with minimal polynomial $m_a(X)\in K[X]$, we write $\delta(a)=m_a'(a)$ for the \textit{different} of the element $a$. Given $f(X)=X^n+...+a_1X+a_0\in K[X]$ with roots $r_1,...,r_n \in \overline{K}$ (not necessarily distinct), we define the \textit{discriminant} of $f(X)$ as 
$$\text{disc}(f)=(-1)^{\frac{n(n-1)}{2}} \prod_{i<j} (r_i-r_j)^2$$  
The discriminant is a symmetric polynomial in the roots and can therefore be expressed as an integer polynomial in the $a_i$'s (see Theorem H.1 \cite{rotman}). 

\begin{fact} \label{unramfact}
Let $(K,v)$ be a henselian valued field and $(K',v')/(K,v)$ be a finite extension. Then the following are equivalent: 
\begin{enumerate}[label=(\roman*)]
\item $(K',v')/(K,v)$ is unramified.
\item There exists $a\in \Oo_{v'}$ such that $\delta(a) \in \Oo_{v'}^{\times}$ and $\Oo_{v'}=\Oo_v[a]$.
\item There exists $a\in \Oo_{v'}$ such that $\delta(a) \in \Oo_{v'}^{\times}$ and $K'=K(a)$.
\item There exists a monic irreducible polynomial $f(X)\in \Oo_v[X]$ such that $\text{disc}(f) \in \Oo_{v}^{\times}$ and $K'\cong K[X]/(f(X))$.

\item $\Oo_{v'}/\Oo_v$ is unramified. 

\item $\Oo_{v'}/\Oo_v$ is \'etale. 
\end{enumerate}
\end{fact}
\begin{proof}
We first prove that (i)$\iff$(ii)$\iff$(iii)$\iff$(iv).\\
(i)$\Rightarrow $(ii): Since $k_{v'}/k_v$ is finite separable, we can write $k_{v'}=k_v(\alpha)$ for some $\alpha \in k_{v'}$ with $\delta(\alpha)\neq 0$. Let $a\in \Oo_{v'}$ be any lift of $\alpha$. Note that 
$$[K':K]\geq [K(a):K]\geq [k_v(\alpha):k_v]$$ 
Since $[K':K]=[k_{v'}:k_v]$, equality holds everywhere and $K'=K(a)$. It follows that $\overline{m_a}(X)=m_{\alpha}(X)$ and hence $\overline{\delta(a)}=\delta(\alpha)$. Since $\delta(\alpha)\neq 0$, we have $\delta(a)\in \Oo_{v'}^{\times}$. We finally check that $\Oo_{v'}=\Oo_v[a]$. The inclusion $\Oo_v[a]\subseteq \Oo_{v'}$ is clear. For the other inclusion,  let $n=\text{deg}(m_a)$ and suppose that $c_0+c_1\cdot a+...+c_{n-1}\cdot a^{n-1}\in \Oo_{v'}$ for some $c_i\in K$ (not all zero). Let $i$ be such that $vc_i=\min_j vc_j$. Observe that 
$$\overline{c_0c_i^{-1}}+\overline{c_1c_i^{-1}}\cdot \alpha+...+\overline{c_{n-1}c_i^{-1}}\cdot \alpha^{n-1}  \neq 0$$
since the left hand side is a non-zero polynomial in $\alpha$ with coefficients in $k_v$ of degree $<n$. It follows that
$$c_0c_i^{-1}+c_1c_i^{-1}\cdot a+...+c_{n-1}c_i^{-1}\cdot a^{n-1} \in \Oo_{v'}^{\times}$$ 
and therefore 
$$v(c_i)=v'(c_0+c_1\cdot a+...+c_{n-1}\cdot a^{n-1})\geq 0$$ 
We conclude that $v(c_j)\geq 0$ for all $j$ and $\Oo_{v'}=\Oo_v[a]$. \\
(ii)$\Rightarrow$(iii): Clear.\\
(iii)$\iff$(iv):  For any field $K$ and $a\in K^s$ with minimal polynomial $m_a(X)\in K[X]$, one checks that $\text{disc}(m_a)=m_a'(a)^2=\delta(a)^2$. The rest is easy.\\
(iii)$\Rightarrow$(i): By a variant of Hensel's Lemma (see Theorem 4.1.3 \cite{engprest}) and  since $m_a(X) \in \Oo_v[X]$ is irreducible, we get $\overline{m_a}(X)=f(X)^s$ for some irreducible polynomial $f(X)\in k_v[X]$. Let $\alpha$ be the reduction of $a$ modulo $\mathfrak{m}_{v'}$, so $f(\alpha)=0$. Since $0\neq  \overline{m_a'(a)}= s\cdot f^{s-1}(\alpha)\cdot f'(\alpha)$, this forces $s=1$ and $f'(\alpha)\neq 0$. This implies that $\overline{m_a}(X)=m_{\alpha}(X)$ and $m_{\alpha}(X)$ is separable. It easily follows that $(K',v')/(K,v)$ is unramified.\\
We have established (i)$\iff$(ii)$\iff$(iii)$\iff$(iv). \\
(ii)$\Rightarrow$(v): It is clear that $\Oo_{v}[a]$ is a finitely generated $\Oo_v$-algebra. A direct computation shows that $\Omega_{\Oo_{v'}/\Oo_v}$ is killed by $\delta(a)$. Since $\delta(a)\in \Oo_{v'}^{\times}$, we get that $\Omega_{\Oo_{v'}/\Oo_v}=0$.\\
(v)$\Rightarrow$(vi): Since $\Oo_{v'}/\Oo_v$ is a finitely generated integral extension, it is of finite presentation. 
We also have that $\Oo_{v'}$ is a flat $\Oo_v$-module because it is torsion-free over a valuation ring (see Tag 0539 \cite{sp}).
By Tag 02GV \cite{sp}, we get that $\Oo_{v'}/\Oo_v$ is \'etale. \\
(vi)$\Rightarrow$(i): 	By Chapitre X, Th\'eor\`eme 1 \cite{ray}, $K'$ is contained in the inertia field of $K$. Since $(K,v)$ is henselian, this means that $(K',v')/(K,v)$ is unramified.
\end{proof}
\section{Perfectoid and tame fields}
\subsection{Perfectoid fields} \label{localapprox}
\begin{definition}
A perfectoid field is a complete valued field $(K,v)$ of residue characteristic $p>0$ such that $\Gamma_v$ is a dense subgroup of $\mathbb{R}$ and the Frobenius map $\Phi : \mathcal{O}_K/(p) \to \mathcal{O}_K/(p):x\mapsto x^p$ is surjective.
\end{definition}
\begin{rem} \label{remarksforperfectoid}
\begin{enumerate}[label=(\roman*)]
\item Modulo the other conditions, the last condition is equivalent to $\Phi:\Oo_K/(\varpi)\to \Oo_K/(\varpi):x\mapsto x^p$ being surjective for some (equivalently any) $\varpi\in \mathfrak{m}_v$ with $0<v\varpi \leq vp$.
\item  In characteristic $p$, a perfectoid field is simply a perfect, complete non-archimedean valued field of rank $1$. 
\end{enumerate}
\end{rem}
 
\begin{example}
\begin{enumerate}[label=(\roman*)]
\item The $p$-adic completions of $\Q_p(p^{1/p^{\infty}})$, $\Q_p(\zeta_{p^{\infty}})$ and $\Q_p^{ab}$ are mixed characteristic perfectoid fields.
\item  The $t$-adic completions of $\F_p(\!(t)\!)^{1/p^{\infty}}$ and $\overline{\F}_p(\!(t)\!)^{1/p^{\infty}}$ are perfectoid fields of characteristic $p$.
\end{enumerate}

\end{example}

\subsection{Tilting} \label{perfsec}
%
A construction, originally due to Fontaine, transforms any perfectoid field $K$ to a perfectoid field $K^{\flat}$ of characteristic $p$, called the \textit{tilt} of $K$. We do not insist that $\text{char}(K)=0$ but the tilting construction is only interesting in that case (we have $K^{\flat}\cong K$ if $\text{char}(K)=p$). We refer the reader to \S 3 \cite{ScholzeICM} and \S  3 \cite{Scholze} for details.

As a set, we have $K^{\flat}=\{ (x_n)_{n\in \omega} \in K^{\omega}: x_{n+1}^p=x_n\}$. We define  multiplication in $K^{\flat}$ coordinatewise 
$$(x_n)_{n\in \omega} \cdot (y_n)_{n\in \omega}=(x_n\cdot y_n)_{n\in \omega} $$ 
Addition in $K^{\flat}$ is a bit more involved and is described  by the following rule
$$ (x_n)_{n\in \omega} + (y_n)_{n\in \omega} = (z_n)_{n\in \omega} \mbox{ where }z_n=\lim_{m\to \infty} (x_{n+m}^{p^m} + y_{n+m}^{p^m})$$ 
We have the \textit{sharp} map from $K^{\flat}$ to $K$, which simply picks out $x_0$, namely
$$\sharp: K^{\flat}\to K:(x_n)_{n\in \omega} \mapsto x_0 $$
It is clearly a multiplicative morphism. We define a valuation $v^{\flat}$ on $K^{\flat}$ by 
$$v^{\flat}(x)=v(x^{\sharp})$$ 
We collect some basic facts about tilting for later use:
\bl  \label{lemscholz1}
We have that $(K^{\flat},v^{\flat})$ is a perfectoid field of characteristic $p$. Moreover:
\begin{enumerate}[label=(\roman*)]

\item  We have that 
$$\Oo_{K^{\flat}}=\{ (x_n)_{n\in \omega} \in \Oo_{K}^{\omega}: x_{n+1}^p=x_n\} \mbox{ and }\mathfrak{m}_{K^{\flat}}=\{ (x_n)_{n\in \omega} \in \mathfrak{m}_{K}^{\omega}: x_{n+1}^p=x_n\}$$
\item  We have canonical identifications 
$$\Gamma_{v^{\flat}}=\Gamma_v\mbox{ and }k_{v^{\flat}}=k_v$$
\item Let $\varpi \in \Oo_K$ be a \textit{pseudo-uniformizer}, i.e., a non-zero element  with $0<v \varpi \leq vp$. Then there exists $\varpi^{\flat}\in \Oo_{K^{\flat}}$ such that
$$\Oo_K/(\varpi)\cong \Oo_{K^{\flat}}/(\varpi^{\flat}) $$

\item The sharp map $\sharp: K^{\flat}\to K$ is a multiplicative morphism and is additive modulo $p$, i.e., $(x+y)^{\sharp}\equiv x^{\sharp}+y^{\sharp} \mod p\Oo_{K}$ for any $x,y \in \Oo_{K^{\flat}}$.

\item  If $\text{char}(K)=p$, then $(K^{\flat},v^{\flat})\cong (K,v)$.
\end{enumerate}
\el 
\begin{proof}
See Lemma 3.4 \cite{Scholze} and Lemma 3.2 \cite{ScholzeICM}.
\end{proof}

\begin{rem} [Remark 3.5 \cite{Scholze}] \label{scholzremark}
After replacing $\varpi$ with a unit multiple, we can assume that $\varpi$  comes equipped with a compatible system of $p$-power roots in $K$. We then let $\varpi^{\flat}=(\varpi,\varpi^{1/p},...)$. 
\end{rem}


For a detailed analysis of the examples below, see Corollary 4.4.3 \cite{KK1}:
\begin{example}  \label{perfex}
In the examples below, $\widehat{K}$ stands for the $p$-adic (resp. $t$-adic) completion of the field $K$ depending on whether its characteristic is $0$ or $p$.
\begin{enumerate}[label=(\roman*)]
\item $\widehat{\Q_p(p^{1/p^{\infty}})} ^{\flat}\cong \widehat{\F_p(\!(t)\!)^{1/p^{\infty}}}$ and $t^{\sharp}= p$.\\
\item $ \widehat{\Q_p(\zeta_{p^{\infty}})}^{\flat}\cong \widehat{\F_p(\!(t)\!)^{1/p^{\infty}}}$ and $(t+1)^{\sharp}= \zeta_p$.\\
\item $\widehat{\Q_p^{ab}}^{\flat} \cong \widehat{\overline{ \F}_p(\!(t)\!)^{1/p^{\infty}}}$ and $(t+1)^{\sharp}= \zeta_p$.
\end{enumerate}

\end{example}

\subsection{Tame fields} \label{tamefieldssec}
The algebra and model theory of tame fields was introduced and studied by Kuhlmann \cite{Kuhl}.  We provide an overview for the convenience of the reader, also in a slightly more general setting which will be useful later on.
\subsubsection{Basic algebra of (roughly) tame fields}
\begin{definition}
Let $(K,v)$ be a henselian valued field. A finite valued field extension $(K',v')/(K,v)$ is said to be tame if the following conditions hold:
\begin{enumerate}
\item  If $p=\text{char}(k)>0$, then $p\nmid [\Gamma':\Gamma]$. 
\item The residue field extension $k'/k$ is separable.
\item The extension $(K',v')/(K,v)$ is defectless.
\end{enumerate}
\end{definition}
\begin{definition}
A henselian valued field $(K,v)$ is said to be \textit{tame} if every finite valued field extension $(K',v')/(K,v)$ is tame. 
\end{definition}
A more intrinsic description of tame fields can be given:
\begin{fact} [Theorem 3.2 (5) \cite{Kuhl}] \label{tamechar}
Let $(K,v)$ be henselian valued field of residue characteristic exponent $p$. Then the following are equivalent:
\begin{enumerate}[label=(\roman*)]
\item $(K,v)$ is a tame field.

\item $(K,v)$ is algebraically maximal, $\Gamma$ is $p$-divisible and $k$ is perfect. 
\end{enumerate}
\end{fact}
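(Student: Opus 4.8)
The statement is Kuhlmann's \cite{Kuhl}, so the plan is to give the standard ramification-theoretic argument. First a reduction: the characteristic exponent satisfies $p=\mathrm{char}(k)$ or $p=1$, and if $\mathrm{char}(k)=0$ then---the defect being a power of $1$---every finite extension is defectless and there are no proper finite immediate extensions, so both (i) and (ii) hold for \emph{every} henselian field. Hence I may assume $\mathrm{char}(k)=p>0$.

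\emph{Proof that (i) implies (ii).} Defectlessness is built into the definition of a tame field, and a proper finite immediate extension has defect equal to its degree, so defectlessness forces algebraic maximality. If $\Gamma$ were not $p$-divisible, I would pick $a\in K^{\times}$ with $va\notin p\Gamma$: then $X^{p}-a$ is irreducible and $K(a^{1/p})/K$ has degree $p$ with ramification index $p$, contradicting tameness. If $k$ were not perfect, I would pick a unit $a\in\Oo_K^{\times}$ whose residue lies outside $k^{p}$: then $K(a^{1/p})/K$ has degree $p$, is unramified in value, and has purely inseparable residue extension $k(\bar a^{1/p})/k$, again contradicting tameness.

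\emph{Proof that (ii) implies (i).} Let $K'/K$ be a finite extension; I must check it is tame. First note (ii) forces $K$ to be perfect when $\mathrm{char}(K)=p$ (otherwise a proper purely inseparable, hence immediate, extension would violate algebraic maximality), so $K'/K$ is in all cases separable. The residue extension $k'/k$ is finite, hence separable since $k$ is perfect; and since $\Gamma$ is $p$-divisible while $\Gamma'$ is torsion-free, the finite quotient $\Gamma'/\Gamma$ has no $p$-torsion, so $p\nmid[\Gamma':\Gamma]$. It remains to prove $d(K'/K)=1$. Suppose not, and pass to the maximal tamely ramified subextension $K_{t}=K'\cap K^{r}$, where $K^{r}$ is the ramification field of $K$ (see \cite{engprest}). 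Since $k$ is perfect, $K^{r}$ has algebraically closed residue field; since $\Gamma$ is $p$-divisible, its prime-to-$p$ divisible hull is in fact divisible, so $K^{r}$ has divisible value group. Consequently $K_{t}$ already realizes the full residue extension ($k_{K_{t}}=k'$, as $k'/k$ is separable) and the full value-group extension ($\Gamma_{K_{t}}=\Gamma'$, as $\Gamma'/\Gamma$ is prime to $p$); hence $K'/K_{t}$ has trivial ramification index and inertia degree, i.e.\ is immediate, of degree $d(K'/K)>1$. Thus the finite tame extension $K_{t}$ of the algebraically maximal field $K$ admits a proper finite immediate extension. But a finite tame extension of an algebraically maximal valued field is again algebraically maximal, a contradiction; hence $K'/K$ is defectless and therefore tame.

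\emph{Main obstacle.} The one step that is not essentially formal is the stability statement ``a finite tame extension of an algebraically maximal valued field is again algebraically maximal''. This is the ramification-theoretic heart of the matter: it rests on multiplicativity of the defect in towers together with the structure of the ramification field ($K^{\mathrm{sep}}/K^{r}$ being purely wild), and it is exactly what permits transferring a hypothetical immediate extension of a tame extension back to the base. I would invoke it from Kuhlmann's development of tame fields \cite{Kuhl} (equivalently, from the ramification theory of henselian valued fields, e.g.\ \cite{engprest}) rather than redevelop the inertia and ramification subgroups and their fundamental exact sequences from scratch.
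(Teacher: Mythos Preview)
The paper does not prove this statement; it is recorded as a Fact and attributed directly to Kuhlmann \cite{Kuhl}, Theorem 3.2(5), with no argument given. Your write-up is a faithful reconstruction of the standard ramification-theoretic proof from that reference, and you have correctly isolated the one non-formal ingredient (stability of algebraic maximality under finite tame extensions) and flagged it as a black box from \cite{Kuhl}.
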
 
Kuhlmann-Rzepka \cite{KuhlBla} relax the value group condition, asking only that $\Gamma$ be \textit{roughly} $p$-divisible in the sense of Johnson \cite{Johnson}. This means that the interval $[-vp,vp]$ is $p$-divisible. 
\begin{definition} 
A valued field $(K,v)$ is called \textit{roughly tame} if it is algebraically maximal, $\Gamma$ is roughly $p$-divisible and $k$ is perfect. 
\end{definition}
\begin{rem} \label{roughtamerem}
\begin{enumerate}[label=(\roman*)]
\item  Note that $(-vp,vp)=\Gamma$ if $\text{char}(K)=p$ and $[-vp,vp]=\{0\}$ if $\text{char}(k)=0$, so roughly tame is the same as tame in equal characteristic. 
\item In residue characteristic zero, being (roughly) tame is the same as being henselian. In mixed characteristic, a roughly tame valued field is one whose core field $(K^{\circ},v^{\circ})$ is tame.
\item Roughly tame fields are defectless. Indeed, roughly tame fields of equal characteristic are just tame fields and in the mixed characteristic case the valuation is the composite of a valuation of equal characteristic zero and a tame valuation, both of which are defectless. 
\end{enumerate}
\end{rem}
\subsubsection{Model theory of roughly tame fields} \label{modeltheoryoftame}
We generalize some of Kuhlmann's results for tame fields to the broader context of roughly tame fields. We start with the henselian rationality property. Throughout the proof, we use the
notation $(L,u)^h$ for the henselization of any valued field $(L,u)$, or $u^h$ for the henselization of $u$ when the supporting field is implicit.
\begin{fact} [cf. Theorem 1.2 \cite{kuhlrat}] \label{henrat}
Let $(K,v)$ be a roughly tame field and $(F,w)/(K,v)$ be an immediate valued field extension such that $F/K$ is finitely generated of transcendence degree $1$. Then there exists $x\in F$ such that we have $(F,w)^h=(K(x),w')^h$, where $w'$ denotes the restriction of $w$ to $K(x)$.
\end{fact}
\begin{proof}
In equal characteristic, this follows directly from Theorem 1.10 \cite{Kuhl}, so we restrict ourselves to the mixed characteristic case. Write $v_0$ (resp. $w_0$) for the finest coarsening of $v$ (resp. $w$) of residue characteristic $0$ and $K^{\circ}$ (resp. $F^{\circ}$) for the residue field. We also denote by $v^{\circ}$ (resp. $w^{\circ}$) the induced valuation of $v$ (resp. $w$) on $K^{\circ}$ (resp. $F^{\circ}$). Note that $(K^{\circ},v^{\circ})$ is tame by Remark \ref{roughtamerem}(ii).

By Lemma 2.8(ii) \cite{AnscombeJahnkeNIP}, one can write $w^h=(w^h)_0\circ w^{\circ h}$, where $(w^h)_0$ is the finest coarsening of $w^h$ of residue characteristic $0$. We may thus identify the residue field of $(F^h,(w^h)_0)$ with the field obtained by taking the henselization of $F^{\circ}$ with respect to $w^\circ$. We summarize the above discussion in the diagram below
\[
  \begin{tikzcd} 
   F^h \arrow[r,"(w^h)_0"] & F^{\circ h} \arrow[r, "w^{\circ h}"] & k_{w^h} \\
 F \arrow[u, no head] \arrow[r,"w_0"] & F^{\circ} \arrow[u, no head] \arrow[r, "w^{\circ}"] & k_w \arrow[u, "=", no head]\\
  K\arrow[u, no head] \arrow[r,"v_0"] & K^\circ \arrow[u, no head] \arrow[r,"v^{\circ}"] & k_{v} \arrow[u, "=", no head] 
     \end{tikzcd}
\]
Since 
$$e(w_0/v_0)\cdot e(w^{\circ}/v^{\circ})=e(w/v)=1$$ 
we get $e(w_0/v_0)=e(w^{\circ}/v^{\circ})=1$. It follows that $(F^{ \circ},w^{\circ})/ (K^{\circ},v^{\circ})$ is immediate. Also, note that
$$\text{tr.deg}(F^{ \circ}/K^{\circ}) \leq \text{tr.deg}(F/K)\leq 1$$ 
Since $(K^{\circ},v^{\circ})$ is tame, it is algebraically maximal (see Fact \ref{tamechar}). Therefore, we must either have that $\text{tr.deg}(F^{ \circ}/K^{\circ})=1$ or $F^{ \circ}=K^{\circ}$. 

First, suppose that $\text{tr.deg}(F^{ \circ}/K^{\circ})=1$. Since $\text{tr.deg}(F^{ \circ}/K^{\circ})=\text{tr.deg}(F/K)$, $w_0$ is an Abhyankar extension of $v_0$. The residue field extension $F^{ \circ}/K^{\circ}$ is then finitely generated (see Corollary 2.3 \cite{Kuhl}).
Therefore $(F^{ \circ},w^{\circ})/ (K^{\circ},v^{\circ})$ is an immediate function field of transcendence degree $1$. By Theorem 1.10 \cite{Kuhl}, we can write $(F^{ \circ }, w^\circ)^h=(K^{\circ}(t), \nu)^h$ for some $t \in F^{\circ}$, where $\nu$ denotes the restriction of $w^\circ$ to $K^\circ(t)$. Let $x\in F$ be any lift of $t$ and let $u_0$ be the restriction of $(w^h)_0$ to $K(x)$. We have 
$$e((w^h)_0/u_0^h)\cdot e(u_0^h/v_0)=e((w^h)_0/v_0)=e(w_0/v_0)=1$$ 
This gives $e((w^h)_0/u_0^h)=1$ and hence $(F^h,(w^h)_0)/(K(x)^h,u_0^h)$ is an immediate algebraic extension. Since $(K(x)^h,u_0^h)$ is henselian of equal characteristic $0$, it is algebraically maximal (see Corollary 4.22 \cite{vdd}). This forces $(K(x),w')^h=(F,w)^h$, where $w'$ denotes the restriction of $w$ to $K(x)$, as required. 

Finally, suppose that $F^{ \circ}=K^{\circ}$. Let $x\in F$ be any element transcendental over $K$ and $u_0$ be the restriction of $w_0$ to $K(x)$. Then $(F^{h},(w^h)_0)/(K(x)^h,u_0^h)$ is an immediate algebraic extension. Arguing as before, this yields $(F,w)^h=(K(x),w')^h$.
\end{proof}

\bl \label{tamelemma}
Let $(K,v)\subseteq (K',v')$ be two tame fields of mixed characteristic $(0,p)$ with $k_v=k_{v'}$. Suppose $\zeta_p \in K$. Then the natural map 
$$K^{\times}/K^{\times p}\to K'^{\times}/K'^{\times p}:a\cdot K^{\times p} \mapsto a \cdot K'^{\times p}$$ 
is bijective.
\el 
\begin{proof} 
Let $a \in K^{\times } \backslash K^{\times p}$ and consider the Kummer extension $K(a^{1/p})/K$. Since $(K,v)$ is defectless, $\Gamma_v$ is $p$-divisible and $\zeta_p\in K$, the extension $K(a^{1/p})/K$ is an unramified Galois extension of degree $p$. It therefore comes from a Galois extension of $k_v$ of degree $p$. By Artin-Schreier theory, the latter is an Artin-Schreier extension. This yields a canonical bijection
$$K^{\times}/K^{\times p} \to k_v^+/\wp(k_v^+): a\cdot K^{\times p} \mapsto \alpha +\wp(k_v) $$  
where $\alpha$ is such that the residue field of $K(a^{1/p})$ is the splitting field of $\wp_\alpha(X)=X^p-X-\alpha$. The above analysis also applies to $K'$. We thus have a commutative diagram 
\[
  \begin{tikzcd} 
 K^{\times}/K^{\times p} \arrow[r] \arrow[d, "\cong"] &  K'^{\times}/K'^{\times p} \arrow[d] \arrow[d, "\cong"] \\
  k_v^+/\wp(k_v^+) \arrow[r] & k_{v'}^+/\wp(k_{v'}^+) 
     \end{tikzcd}
\]
where the vertical arrows are bijections. Since $k_v=k_{v'}$, the bottom horizontal arrow is also a bijection. It follows that the top horizontal arrow is a bijection. 
\end{proof}
\bl \label{ptorsionfree}
Let $(K,v)\subseteq (K',v')$ be two valued fields of mixed characteristic $(0,p)$. Set $K_1=K(\zeta_p)$ (resp. $K_1'=K'(\zeta_p)$) and let $v_1$ (resp. $v_1'$) be a valuation on $K_1$ (resp. $K_1'$) extending $v$ (resp. $v'$). Suppose that $\Gamma_{v_1'}/\Gamma_{v_1}$ is $p$-torsion-free. Then $\Gamma_{v'}/\Gamma_v$ is $p$-torsion-free. 
\el 
\begin{proof}
Let $\gamma \in \Gamma_{v'}$ be such that $p\cdot \gamma \in \Gamma_v$. We have $\Gamma_v\subseteq \Gamma_{v_1}$ and $\Gamma_{v'}\subseteq \Gamma_{v_1'}$. Since $\Gamma_{v_1'}/\Gamma_{v_1}$ is $p$-torsion-free and $p\cdot \gamma \in \Gamma_{v_1}$, this implies that $\gamma \in \Gamma_{v_1}$. Since $[\Gamma_{v_1}:\Gamma_v] \mid p-1$, we get that $(p-1) \cdot \gamma \in \Gamma_v$. Finally, note that $\gamma=p\cdot \gamma -(p-1)\cdot \gamma$ and hence $\gamma\in \Gamma_v$.
\end{proof}
\bl [cf. Lemma 3.7 \cite{Kuhl}] \label{RAC}
Let $(K,v)\subseteq (K',v')$ be two valued fields such that $(K',v')$ is roughly tame and $K$ is relatively algebraically closed in $K'$. Suppose that $k_{v'}/k_v$ is algebraic. Then $(K,v)$ is roughly tame and moreover $\Gamma_{v'}/\Gamma_v$ is torsion-free and $k_{v'}=k_v$.
\el 
\begin{proof}
In equal characteristic, this follows directly from Theorem 1.10 \cite{Kuhl}. 
We may thus restrict ourselves to the mixed characteristic case. With the notation of Remark \ref{corefield}, we consider the decompositions below
\[
  \begin{tikzcd} 
 K' \arrow[r,"v_0'"] & K'^\circ \arrow[r, "v'^{\circ}"] & k_{v'}  \\
  K\arrow[u, no head] \arrow[r,"v_0"] & K^\circ \arrow[u, no head] \arrow[r,"v^{\circ}"] & k_{v} \arrow[u, no head] 
     \end{tikzcd}
\]
Since $K$ is relatively algebraically closed in $K'$ and $(K',v')$ is henselian, we get that $(K,v)$ is henselian. By Corollary 4.1.4 \cite{engprest}, it follows that $(K,v_0)$ is henselian. Since $K$ is relatively algebraically in $K'$ and $K'^{\circ}/K^{\circ}$ is separable, we get that $K^{\circ}$ is relatively algebraically closed in $K'^{\circ}$. Since $(K',v')$ is roughly tame, we have that $(K'^{\circ},v'^{\circ})$ is tame. By Lemma 3.7 \cite{Kuhl}, we get that $(K^{\circ},v^{\circ})$ is tame and moreover that $k_{v'}=k_v$.  Equivalently, we have that $(K,v)$ is roughly tame by Remark \ref{roughtamerem}(ii). 

We claim that $\Gamma_{v'}/\Gamma_v$ is torsion-free. Showing that $\Gamma_{v'}/\Gamma_v$ is $\ell$-torsion-free for $\ell\neq p$ prime is an immediate consequence of henselianity and is left to the reader. We now prove that $\Gamma_{v'}/\Gamma_v$ is $p$-torsion-free. By Lemma \ref{ptorsionfree}, we may assume that $\zeta_p\in K$. Suppose $b^p=a\cdot u'$, where $b\in K'$, $a\in K$ and $u' \in \Oo_{v'}^{\times}$. Write $\overline{u'} \in K'^{\circ}$ for the reduction of $u'$ modulo $\mathfrak{m}_{v_0'}$. Note that $\overline{u'} \neq 0$ because $\Oo_{v'}^{\times}\subseteq \Oo_{v'_0}^{\times}$. Applying Lemma \ref{tamelemma} to $(K'^{\circ},v'^{\circ})/(K^{\circ},v^{\circ})$, we see that there is $u\in K^{\times}$ such that $\overline{u'/u} \in K'^{\circ \times p}$. By Hensel's Lemma for $(K',v_0')$, we get that $u'/u\in K'^{\times p}$. It follows that there is $c\in K'^{\times}$ such that $c^p=a\cdot u$. Since $K$ is relatively algebraically closed in $K'$, this forces $au \in K^{\times p}$. Since $v'b=v'c$, It follows that $v'b\in \Gamma_v$.
\end{proof}
We now prove the \textit{relative embedding property} for roughly tame fields:
\begin{fact} [cf. Theorem 7.1 \cite{Kuhl}] \label{relembprop}
Let $(K,v)$ be a defectless valued field. Let also $(K^*,v^*)$ and $(K',v')$ be two roughly tame fields extending $(K,v)$. Suppose that: 
\begin{itemize}
\item $(K^\ast,v^*)$ is $|K'|^+$-saturated.
\item $\Gamma_{v'}/\Gamma_v$ is torsion-free and $k_{v'}/k_{v}$ separable.
\item There exist embeddings 
$$\rho: \Gamma_{v'} \to\Gamma_{v^*}\mbox{ over }\Gamma_v\mbox{ and }\sigma:k_{v'}\to k_{v^*} \mbox{ over }k_v$$ 
\end{itemize}
Then there is also a valued field embedding $\Phi:(K',v')\to (K^*,v^*)$ over $(K,v)$, which respects $\rho$ and $\sigma$.
\end{fact}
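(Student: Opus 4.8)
The plan is to follow Kuhlmann's proof of the relative embedding property for tame fields, \cite[Theorem~7.1]{Kuhl}, and to check that every step only ever refers to values in the interval $[-vp,vp]$, so that rough $p$-divisibility suffices in place of $p$-divisibility. Concretely, I would construct $\Phi$ by transfinite recursion as the union of an increasing chain of valued field embeddings $\phi_\alpha\colon (L_\alpha,v)\hookrightarrow(K^*,v^*)$ over $(K,v)$ with $K\subseteq L_\alpha\subseteq K'$, such that each $L_\alpha$ is henselian and defectless --- hence algebraically maximal, since an immediate finite extension of degree $>1$ is a defect extension --- and such that each $\phi_\alpha$ is compatible with $\rho$ and $\sigma$, meaning $v^*\circ\phi_\alpha=\rho\circ v$ on $L_\alpha^{\times}$ and $\phi_\alpha$ induces $\sigma|_{k_{L_\alpha}}$ on residue fields. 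The recursion starts from the henselization of $K$ inside $K'$, using that $K^*$ is henselian to extend the inclusion $K\hookrightarrow K^*$; at limit stages one takes unions, for which one must check (as in \cite{Kuhl}) that the class of such pairs $(L_\alpha,\phi_\alpha)$ is closed under chains. Once $L_\alpha=K'$ we are done.

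The heart of the matter is the successor step: given $\phi\colon(L,v)\hookrightarrow(K^*,v^*)$ of the above kind with $L\subsetneq K'$, I must extend it to a strictly larger $L'\subseteq K'$ of the same kind. Since $L$ is algebraically maximal, a suitably chosen generator $x\in K'\setminus L$ falls into one of three cases. \emph{Residue extension:} $vx=0$ and $\bar x\notin k_L$; one lifts $\sigma(\bar x)$ to some $y\in K^*$ and extends $x\mapsto y$, obtaining an unramified (or residue-transcendental) extension by Hensel, using separability of $k_{v'}/k_v$ and perfectness of $k_{v'}$. \emph{Value-group extension:} $vx\notin\Gamma_L$; if $vx$ is value-transcendental over $\Gamma_L$ one takes any $y\in K^*$ with $v^*y=\rho(vx)$, and if $n\cdot vx\in\Gamma_L$ with $n$ minimal one reduces (via the residue case) to $x^n=z\in L$ and must extract an $n$-th root $y$ of $\phi(z)$ in $K^*$ with $v^*y=\rho(vx)$. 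This last point is where roughness enters: roots of prime-to-$p$ order come from Hensel and perfectness of $k_{v^*}$, while $p$-power roots are provided by the fact that $K^*$ is roughly tame --- rough $p$-divisibility of $\Gamma_{v^*}$ together with perfectness of $k_{v^*}$ makes $\Oo_{v^*}^{\times}$ $p$-divisible in the relevant range of values, which is exactly what one needs. The hypothesis $\Gamma_v\preceq_{\exists}\Gamma_{v'}$ and the embedding $\rho$ are what make these value-group steps possible over the base (cf.\ Lemma~\ref{valuegrouplemma}).

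The main obstacle is the third case --- $K'/L$ immediate --- together with the defectlessness bookkeeping, and this is where the saturation hypothesis is decisive. Because $L$ is defectless, hence algebraically maximal, it has \emph{no} immediate algebraic extension inside $K'$; so any $x\in K'\setminus L$ is transcendental over $L$ and is a pseudo-limit of a pseudo-Cauchy sequence $(a_\nu)_{\nu<\lambda}$ in $L$ of transcendental type, with $\lambda\le|K'|$ and no pseudo-limit in $L$. The image $(\phi(a_\nu))_{\nu<\lambda}$ is a pseudo-Cauchy sequence in $K^*$ with parameter set of size $<|K'|^{+}$, so by $|K'|^{+}$-saturation it has a pseudo-limit $y\in K^*$; then $x\mapsto y$ extends $\phi$ to $L(x)$, which is immediate over $L$ (so compatibility with $\rho,\sigma$ is automatic) and is again defectless after passing to its henselization. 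In each case maximality fails, so the recursion reaches $K'$. The genuinely delicate ingredient, exactly as in \cite{Kuhl}, is maintaining that the $L_\alpha$ stay defectless: it is this that rules out the algebraic-type immediate extensions, which carry nontrivial defect and cannot be embedded by this method.
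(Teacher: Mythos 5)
Your proposal takes a genuinely different route from the paper. The paper does not re-run Kuhlmann's transfinite recursion; instead it reduces the mixed-characteristic roughly tame case to two settings where known results apply via the standard decomposition $K \xrightarrow{v_0} K^{\circ} \xrightarrow{v^{\circ}} k_v$. The core field $K^{\circ}$ is genuinely tame (not just roughly tame), so Theorem~7.1 of \cite{Kuhl} applies verbatim to produce an embedding $\psi$ of core fields; the coarsening $v_0$ has residue characteristic~$0$, so the classical relative embedding property there extends $\psi$ to an embedding $\Psi$ of the original fields; one then checks directly that $\Psi$ respects the full valuations and uses the assumed saturation to land in $(K^*,v^*)$ rather than an auxiliary elementary extension. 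This is a reduction argument, not a re-proof, and it avoids re-opening Kuhlmann's machinery.

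There are two substantive gaps in your approach. First, the organizing heuristic---``every step only ever refers to values in $[-vp,vp]$, so that rough $p$-divisibility suffices''---is not correct as stated. Roughly tame fields abound with elements whose value lies outside $[-vp,vp]$, and the successor step of your recursion certainly needs to adjoin such elements. The reason the argument nevertheless works is precisely the coarsening decomposition: above $vp$ (more precisely, modulo the minimal convex subgroup containing $vp$) the valuation $v_0$ has residue characteristic~$0$ and Hensel handles $p$-th roots of units for free; inside that convex subgroup, the core field is tame and $\Gamma_{v^{\circ}}$ is genuinely $p$-divisible. So the ``relevant range of values'' is not an interval around $0$ but a two-regime split, and making that precise \emph{is} the paper's proof; your proposal gestures at it without performing it. Second, and more seriously, the claim that the intermediate fields $L_\alpha$ can be chosen henselian and defectless is the crux of Kuhlmann's entire argument, and you explicitly punt on it: ``it is this that rules out the algebraic-type immediate extensions.'' In \cite{Kuhl}, defectlessness of the relevant function fields is supplied by the stability theorem, whose proof is stated (and tuned) for tame fields. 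Since you are attempting to prove a statement strictly more general than \cite{Kuhl}'s Theorem~7.1, simply citing \cite{Kuhl} at this step is not available to you; you would need to verify that the stability theorem survives the passage to roughly tame fields. The paper sidesteps this entirely by applying \cite{Kuhl} only to the tame core, where the theorem is exactly as stated.
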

\begin{proof}
By Fact \ref{henrat} and Lemma \ref{RAC} and Lemma 6.4 \cite{Kuhl}.
\end{proof}
From the relative embedding property, many other model-theoretic properties automatically follow. For instance:
\begin{fact} [cf. Theorem 7.1 \cite{Kuhl}] \label{akerelsub}
Let $(K,v)$ be a defectless valued field. Let also $(K^*,v^*)$ and $(K',v')$ be two roughly tame fields extending $(K,v)$. Suppose  that $\Gamma_{v'}/\Gamma_v$ is torsion-free and $k_{v'}/k_{v}$ is separable. Then the following are equivalent: 
\begin{enumerate}[label=(\roman*)]
\item $(K^*,v^*)\equiv_{(K,v)} (K',v')$ in $L_{\text{val}}$. 

\item $\Gamma_{v^*}\equiv_{\Gamma_{v}} \Gamma_{v'}$ in $L_{\text{oag}}$ and $k_{v^*}\equiv_{k_{v}} k_{v'}$ in $L_{\text{rings}}$.
\end{enumerate}
Moreover, this equivalence is even resplendent: 
the result still holds true for expansions of
$L_\text{val}$ which arise by adding further structure to
$L_\text{oag}$ on the value group or
to $L_\textrm{rings}$ 
on the residue
field.
\end{fact}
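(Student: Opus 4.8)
The statement to prove is Fact \ref{akerelsub}: an Ax--Kochen/Ershov principle for roughly tame fields over a defectless base, deriving elementary equivalence from elementary equivalence of value groups and residue fields.

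\medskip

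The plan is to deduce this directly from the relative embedding property (Fact \ref{relembprop}) by the standard back-and-forth argument that converts embedding statements into elementary-equivalence statements. The implication (i) $\Rightarrow$ (ii) is immediate: the value group and residue field are interpretable (indeed, $\emptyset$-interpretable over the parameters) in $(K,v)$, so any $L_{\text{val}}(K)$-elementary equivalence passes to $L_{\text{oag}}(\Gamma_v)$-equivalence of the value groups and $L_{\text{rings}}(k_v)$-equivalence of the residue fields. The substance is (ii) $\Rightarrow$ (i).

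\medskip

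For (ii) $\Rightarrow$ (i), I would first pass to sufficiently saturated elementary extensions. Replace $(K^*,v^*)$ by a $\kappa^+$-saturated elementary extension with $\kappa \geq |K'| + |K| + \aleph_0$; this preserves roughly tameness (an elementary property in $L_{\text{val}}$, given the intrinsic characterization via algebraic maximality, rough $p$-divisibility, and perfectness of the residue field) and preserves (ii). By the hypothesis $\Gamma_{v^*} \equiv_{\Gamma_v} \Gamma_{v'}$ together with saturation of $\Gamma_{v^*}$, there is an embedding $\rho : \Gamma_{v'} \to \Gamma_{v^*}$ over $\Gamma_v$; note $\Gamma_v \preceq_\exists \Gamma_{v'}$ is among the hypotheses (the statement says $\Gamma_v \preceq_\exists \Gamma_v$, evidently a typo for $\Gamma_v \preceq_\exists \Gamma_{v'}$), so the hypotheses of Fact \ref{relembprop} regarding the value group chain are met. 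Similarly, from $k_{v^*} \equiv_{k_v} k_{v'}$, saturation, and the separability of $k_{v'}/k_v$, there is an embedding $\sigma : k_{v'} \to k_{v^*}$ over $k_v$. Now apply Fact \ref{relembprop} to get a valued field embedding $\Phi : (K',v') \to (K^*,v^*)$ over $(K,v)$ respecting $\rho$ and $\sigma$. This realizes the quantifier-free $L_{\text{val}}(K)$-type of every finite tuple from $K'$ inside $K^*$.

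\medskip

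To upgrade quantifier-free realization to full elementarity, run the symmetric construction and iterate: build a countable (or transfinite, up to the relevant saturation bound) chain of embeddings
\[
(K',v') \hookrightarrow (K^*,v^*) \hookrightarrow (K',v')^\sharp \hookrightarrow (K^*,v^*)^\sharp \hookrightarrow \cdots
\]
alternating between saturated elementary extensions of $(K',v')$ and of $(K^*,v^*)$, each step over $(K,v)$ and produced by Fact \ref{relembprop} (one checks at each stage that the new ambient structures remain roughly tame, sufficiently saturated, and that the value group/residue field embeddings extend, using that $\Gamma_v \preceq_\exists$ the relevant value groups and the residue extensions remain separable). The union of this chain is a single valued field into which both an elementary extension of $(K',v')$ and an elementary extension of $(K^*,v^*)$ embed cofinally and compatibly over $(K,v)$; a standard argument then shows the unions from the two sides are isomorphic over $(K,v)$, whence $(K',v') \equiv_{(K,v)} (K^*,v^*)$. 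Alternatively, and more cleanly, one invokes the general model-theoretic fact (cf. Lemma 6.1 \cite{Kuhl}) that a relative embedding property of this form automatically yields relative model completeness and the associated Ax--Kochen/Ershov transfer; the work has already been done in proving Fact \ref{relembprop}, and Fact \ref{akerelsub} is the formal consequence.

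\medskip

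The main obstacle is bookkeeping in the back-and-forth: one must ensure at every stage of the chain that roughly tameness is preserved under the elementary extensions (clear, since it is $L_{\text{val}}$-axiomatizable via Fact \ref{tamechar} suitably relaxed), that the requisite saturation is available on the side being mapped \emph{into}, and that the value-group and residue-field embeddings $\rho, \sigma$ can be chosen coherently so that they extend along the chain rather than being rebuilt arbitrarily at each step --- this coherence is exactly what makes the direct limit well-defined and the two sides isomorphic. Since Fact \ref{relembprop} is stated with the "respects $\rho$ and $\sigma$" clause precisely to enable this, the iteration goes through without new ideas; it is the routine engine-room argument behind every AKE principle of this shape.
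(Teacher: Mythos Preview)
Your proposal is correct and follows essentially the same approach as the paper: the paper's proof is the one-line ``As in Lemma 6.1 \cite{Kuhl}, using Fact \ref{relembprop}'', and what you have written is precisely the standard back-and-forth (or equivalently, an appeal to Lemma 6.1 \cite{Kuhl}) that converts the relative embedding property into the relative AKE statement.
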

\begin{proof}
The equivalence follows 
by Fact \ref{relembprop} and Lemma 6.1 \cite{Kuhl}.
The ``moreover'' part is also an immediate consequence
of Fact \ref{relembprop}. As this is not explicitely stated 
in \cite{Kuhl}, we sketch the proof. Assume that we have expansions
 $L_g$ of $L_\text{oag}$ and $L_r$ of $L_\text{rings}$ and
such that $\Gamma_{v^*}\equiv_{\Gamma_{v}} \Gamma_{v'}$ in $L_g$ and $k_{v^*}\equiv_{k_{v}} k_{v'}$ in $L_r$. Let $L$ be the corresponding
expansion of $L_\text{val}$. 

By Fact \ref{relembprop}, $(K',v')$ embeds as an 
$L$-structure into a $|K'|^+$-saturated elementary extension
$(K^{*}_1,v^{*}_1)$
of $(K^*,v^*)$ over $(K,v)$. Likewise, $(K^{*}_1,v^{*}_1)$ embeds as an 
$L$-structure into a $|K_1^*|^+$-saturated elementary extension
$(K'_1,v'_1)$ of $(K',v')$ over $(K',v')$. Continuing in this fashion, we construct an increasing chain of $L$-structures 
$$ (K^*_1,v^*_1)\subseteq (K_1',v_1') \subseteq (K^*_2,v^*_2) \subseteq (K_2',v_2')\subseteq ...   $$
such that $(K'_i,v'_i)\preceq (K'_{i+1},v'_{i+1})$ and $(K^*_i,v^*_i)\preceq (K^*_{i+1},v_{i+1}^*)$ as $L$-structures. We let 
$$(K_\infty,v_\infty)=\bigcup_{n=1}^{\infty} (K^*_n,v^*_n) = \bigcup_{n=1 }^{\infty} (K'_n,v'_n). $$  
By Tarski-Vaught, we get that $(K_\infty,v_\infty)$ is an elementary extension both of  $(K',v')$ and $(K^*,v^*)$ in $L$. It follows that $(K^*,v^*)\equiv_{(K,v)} (K',v')$ in $L$. 
\end{proof}
If the base valued field is $\F_p$ equipped with the trivial valuation, one gets:
\begin{fact} [Theorem 1.4 \cite{Kuhl}] \label{akekuhl}
Let $(K,v)$ and $(K',v')$ be two positive characteristic tame fields. Then $(K,v)\equiv (K',v')$  in $L_{\text{val}}$ if and only if $k_v\equiv k_{v'}$  in $L_{\text{rings}}$ and $\Gamma_v\equiv \Gamma_{v'}$  in $L_{\text{oag}}$.
\end{fact}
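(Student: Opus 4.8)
The plan is to deduce this from the relative embedding property already available in the excerpt, applied over the trivially valued prime field. First I would observe that the implication (tame fields elementarily equivalent) $\Rightarrow$ (residue fields and value groups elementarily equivalent) is immediate: the residue field and the value group are interpretable in the valued field (the residue field as $\Oo_v/\mathfrak{m}_v$, the value group as $K^\times/\Oo_v^\times$), so any $L_{\text{val}}$-sentence equivalence transfers to $L_{\text{rings}}$-sentences about $k_v$ and $L_{\text{oag}}$-sentences about $\Gamma_v$. For the converse, the standard move is to set up a back-and-forth between $\aleph_1$-saturated (or more generally highly saturated) elementary extensions, using Fact \ref{relembprop} as the amalgamation step.

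Concretely, I would argue as follows. Assume $k_v \equiv k_{v'}$ and $\Gamma_v \equiv \Gamma_{v'}$. Pass to a large saturated elementary extension $(K^*,v^*)\succeq (K,v)$; then $k_{v^*}$ and $\Gamma_{v^*}$ are correspondingly saturated. Since $k_v\equiv k_{v'}$ and $\Gamma_v \equiv \Gamma_{v'}$, by saturation there are elementary embeddings $k_{v'}\hookrightarrow k_{v^*}$ and $\Gamma_{v'}\hookrightarrow \Gamma_{v^*}$; in particular $k_{v'}/k_v$ is separable (both fields being perfect, as tame fields of positive characteristic) and $\Gamma_v \preceq_\exists \Gamma_{v'}$. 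The prime field $\F_p$ carries the trivial valuation, which is trivially defectless, so taking $(K,v)=(\F_p,\text{trivial})$ as the common base, Fact \ref{relembprop} yields a valued field embedding $(K',v')\hookrightarrow (K^*,v^*)$ compatible with the chosen maps on value group and residue field. This shows $(K',v')$ embeds into an elementary extension of $(K,v)$; by symmetry (swapping the roles and using saturation on the other side), one gets the reverse, and a standard back-and-forth of length $\omega$ between sufficiently saturated models of the two theories upgrades these embeddings to an elementary equivalence $(K,v)\equiv(K',v')$. Alternatively, one invokes the abstract principle (Lemma 6.1 \cite{Kuhl}, already cited in the proof of Fact \ref{akerelsub}) that the relative embedding property formally implies the Ax--Kochen/Ershov statement.

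I expect the one point requiring care to be the verification that the hypotheses of Fact \ref{relembprop} are genuinely met over the trivially valued base — in particular that "$\Gamma_v \preceq_\exists \Gamma_{v'}$ and $k_{v'}/k_v$ separable" can always be arranged after passing to suitable saturated models, and that the base field being trivially valued does not cause degeneracies in the standard decomposition used in that proof (the core field of a trivially valued field of characteristic $p$ is just the field itself with trivial valuation, which is fine). Everything else is the routine saturation-and-back-and-forth packaging; the real content has already been absorbed into Fact \ref{relembprop}, which is why the proof reduces to "as in Lemma 6.1 \cite{Kuhl}."
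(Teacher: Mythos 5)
The paper itself does not re-prove Fact~\ref{akekuhl}; it is recorded as a direct citation of Kuhlmann's Theorem~1.4. Your plan --- relative embedding property over the trivially valued prime field, then a back-and-forth between saturated models --- is indeed Kuhlmann's own route, so the approach is the right one. However, there is a genuine gap at exactly the point you flag and then wave away. The hypothesis ``$\Gamma_v \preceq_\exists \Gamma_{v'}$'' in Fact~\ref{relembprop}, read over the base $(\F_p,\text{trivial})$, becomes ``$\{0\} \preceq_\exists \Gamma_{v'}$''. This is simply false whenever $\Gamma_{v'}$ is non-trivial: the existential $L_{\text{oag}}$-sentence $\exists x\,(x>0)$ holds in $\Gamma_{v'}$ but not in the trivial group, so the trivial group is never existentially closed in a non-trivial ordered abelian group. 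Passing to more saturated models cannot repair this, since saturation enlarges $\Gamma_{v'}$ and does nothing to the base group. Your sentence ``by saturation there are elementary embeddings $\dots$; in particular $\dots$ $\Gamma_v\preceq_\exists\Gamma_{v'}$'' is therefore a non sequitur in the setting where $\Gamma_v=\{0\}$.

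The fix is to not apply the paper's restatement Fact~\ref{relembprop} as stated, but to invoke Kuhlmann's Theorem~7.1 directly (which is precisely what the proof of Fact~\ref{relembprop} does ``directly'' in equal characteristic). Kuhlmann's hypotheses there are that $\Gamma_{v'}/\Gamma_v$ be \emph{torsion-free} and $k_{v'}/k_v$ be separable; both hold automatically over the trivially valued $\F_p$, since any ordered abelian group is torsion-free and $\F_p$ is perfect. The paper's restatement replaces ``torsion-free'' by the strictly stronger ``$\preceq_\exists$'' because it is formulated for roughly tame fields in mixed characteristic, where the extra strength is genuinely needed in the standard-decomposition step (via Lemma~\ref{valuegrouplemma}(i)); this strengthening is harmless for the paper's own applications, where the common base always carries a non-trivial valuation, but it breaks exactly in your trivially-valued setup. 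Once you cite Kuhlmann's Theorem~7.1 with its original, weaker hypotheses, your back-and-forth packaging goes through and gives Fact~\ref{akekuhl}; the easy direction you dispose of via interpretability is fine.
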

The analogous statement in mixed characteristic is not true (cf. Remark \ref{tameakerem}).
\begin{fact} [cf. Theorem 1.4 \cite{Kuhl}] \label{precversion}
Let $(K,v)$ be a roughly tame field and $(K,v)\subseteq (K',v')$. Then the following are equivalent:
\begin{enumerate}[label=(\roman*)]
\item $(K,v)\preceq_{\exists} (K',v')$ in $L_{\text{val}}$.

\item $k_v \preceq_{\exists} k_{v'}$ in $L_{\text{rings}}$ and $\Gamma_v\preceq_{\exists} \Gamma_{v'}$  in $L_{\text{oag}}$.
\end{enumerate}
\end{fact}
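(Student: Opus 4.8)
The plan is to prove the two implications separately; the substance is entirely absorbed into the relative embedding property (Fact \ref{relembprop}), so the rest is assembly.

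For \emph{(i)$\Rightarrow$(ii)}, I would observe that this direction in fact holds for any valued field extension, with no tameness needed. Since $\Oo_v=\Oo_{v'}\cap K$, the residue field $k_v$ is a subfield of $k_{v'}$ and $\Gamma_v$ an ordered subgroup of $\Gamma_{v'}$; it remains to transfer existential sentences with parameters downward. Given an existential $L_{\text{rings}}$-formula $\phi(\bar x)$ and a tuple $\bar a$ from $k_v$, lift $\bar a$ to a tuple $\bar\alpha$ from $\Oo_v$ and rewrite ``$k_{v'}\models\exists\bar x\,\phi(\bar x,\bar a)$'' as an existential $L_{\text{val}}$-sentence over $K$: quantify over $\bar y$ subject to $\bigwedge\Oo(y_j)$ and replace each atomic condition $t(\bar x)=0$ by ``$t(\bar y)\in\mathfrak{m}_v$'' and each $t(\bar x)\ne 0$ by ``$t(\bar y)\in\Oo_v^{\times}$'', both of which are existential in $L_{\text{val}}$ (for instance $s\in\mathfrak{m}_v \iff s=0\vee\exists w(sw=1\wedge\neg\Oo(w))$ and $s\in\Oo_v^{\times}\iff\exists w(sw=1\wedge\Oo(w))$). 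Applying $(K,v)\preceq_{\exists}(K',v')$ to this sentence and reducing the resulting witnesses modulo $\mathfrak{m}_v$ yields $k_v\models\exists\bar x\,\phi(\bar x,\bar a)$. The value group statement is handled the same way, translating a $\Z$-linear equality (resp.\ strict inequality) among values into the condition that a suitable monomial in the lifted elements lies in $\Oo_v^{\times}$ (resp.\ in $\mathfrak{m}_v$).

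For \emph{(ii)$\Rightarrow$(i)}, I would fix a $|K'|^{+}$-saturated elementary extension $(K^{*},v^{*})\succeq(K,v)$. This is again roughly tame — the class of roughly tame fields being closed under elementary equivalence (cf.\ \S\ref{modeltheoryoftame}) — hence defectless, and it extends $(K,v)$. Its value group $\Gamma_{v^{*}}\succeq\Gamma_v$ and residue field $k_{v^{*}}\succeq k_v$ are again $|K'|^{+}$-saturated, being interpretable reducts, so $|\Gamma_{v'}|,|k_{v'}|<|K'|^{+}$ together with the hypotheses $\Gamma_v\preceq_{\exists}\Gamma_{v'}$ and $k_v\preceq_{\exists}k_{v'}$ from (ii) yield, by the usual saturated-embedding criterion, embeddings $\rho\colon\Gamma_{v'}\hookrightarrow\Gamma_{v^{*}}$ over $\Gamma_v$ and $\sigma\colon k_{v'}\hookrightarrow k_{v^{*}}$ over $k_v$. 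Since $k_v$ is perfect, $k_{v'}/k_v$ is separable. All hypotheses of Fact \ref{relembprop} are now in place, so there is a valued field embedding $\Phi\colon(K',v')\hookrightarrow(K^{*},v^{*})$ over $(K,v)$. Finally, for any existential $L_{\text{val}}$-formula $\phi$ and tuple $\bar a$ from $K$, if $(K',v')\models\exists\bar x\,\phi(\bar x,\bar a)$ then, existential formulas being preserved along the embedding $\Phi$ (which fixes $K$ pointwise), also $(K^{*},v^{*})\models\exists\bar x\,\phi(\bar x,\bar a)$, and hence $(K,v)\models\exists\bar x\,\phi(\bar x,\bar a)$ since $(K,v)\preceq(K^{*},v^{*})$. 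This gives $(K,v)\preceq_{\exists}(K',v')$.

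The only genuine obstacles are bookkeeping: in (i)$\Rightarrow$(ii) one must check that the routine translations stay existential, and in (ii)$\Rightarrow$(i) one must know that $(K^{*},v^{*})$ remains roughly tame and that its value group and residue field remain sufficiently saturated, so that Fact \ref{relembprop} genuinely applies. Everything hard is already inside Fact \ref{relembprop} itself.
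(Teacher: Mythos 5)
Your proof is correct and takes the same route the paper intends: the paper's one-line proof defers to Lemma 6.1 of Kuhlmann, which is precisely the standard deduction of relative existential closedness from the relative embedding property via a saturated elementary extension, and that is exactly what you have spelled out. The forward direction you give (lifting and reduction of existential conditions, valid for any valued field extension) and the backward direction (pass to a $|K'|^{+}$-saturated $(K^{*},v^{*})\succeq(K,v)$, use $\preceq_{\exists}$ of value group and residue field together with saturation to produce $\rho$ and $\sigma$, invoke Fact \ref{relembprop}, and pull existential sentences back along $\Phi$ and the elementary inclusion) are the same assembly the paper has in mind.
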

\begin{proof}
By Fact \ref{relembprop} and Lemma 6.1 \cite{Kuhl}.
\end{proof}

\section{Taming perfectoid fields} \label{tamingperfectoids}
We \quotes{tame} certain valued fields by means of first passing to an elementary extension and then finding a (roughly) tame coarsening of the valuation. This includes the case of perfectoid fields, for which the proof was outlined in \S \ref{PARTI}.

\subsection{Extensions with small differents}
Recall from \ref{unramext}, that given a field $K$ and $a\in \overline{K}$ with minimal polynomial $m_a(X)\in K[X]$, we write $\delta(a)=m_a'(a)$ for the \textit{different} of the element $a$. 
\bl \label{frobtrick}
Let $(K,v)$ be a perfect valued field of characteristic $p>0$. Suppose $t\in \mathfrak{m}_v$ is such that $\Z vt$ is cofinal in $\Gamma_v$. Let $(K',v')/(K,v)$ be a finite extension. Then there exists $\alpha \in \Oo_{v'}$ with $0\leq v'(\delta(\alpha))\leq  vt$ and $K'=K(\alpha)$.
\el
\begin{proof}
Since $K'/K$ is finite separable, we can write $K'=K(\beta)$ for some $\beta \in \Oo_{v'}$ such that $\delta(\beta)\neq 0$. Since $\Z vt$ is cofinal in $\Gamma_v$, we have $0\leq  v'(\delta(\beta))\leq N\cdot v t$ for some $N\in \N$. Since $K'$ is also perfect, we also have $K'=K(\beta^{1/p^M})$ for any $M \in \N$. The coefficients of $m_{\beta^{1/p^M}}(X)$ of $\beta^{1/p^M}$ are $p^M$-th roots of the coefficients of $m_{\beta}(X)$ of $\beta$. We then compute that
$$\delta(\beta^{1/p^M})=m_{\beta^{1/p^M}}'(\beta^{1/p^M} )=m_{\beta}'(\beta)^{1/p^M}=\delta(\beta)^{1/p^M}$$ 
For $M\geq \log_p (N)$, we will thus have that 
$$ 0\leq  v'(\delta(\beta^{1/p^M}))\leq v t $$ 
It follows that $\alpha=\beta^{1/p^M}$ has the desired properties.
\end{proof}
\bl \label{trickyelementaryext}
Let $(K,v)$ be a henselian valued field of residue characteristic $p>0$. Suppose $\varpi\in \mathfrak{m}_v$ is such that the residue field of the valuation ring $\Oo_{v}[\varpi^{-1}]$ is perfect of characteristic $p$. Let $(K',v')/(K,v)$ be a finite extension. Then the following are equivalent: 
\begin{enumerate}[label=(\roman*)]
    \item $K'/K$ is unramified with respect to $\Oo_v[\varpi^{-1}]$.
    \item There exists $a \in \Oo_{v'}$ with $0\leq v'(\delta(a))\leq  v\varpi$ and $K'=K(a)$.
\end{enumerate}
\el 
\begin{proof}
Write $\Oo_{v^+}=\Oo_v[\varpi^{-1}]$ (resp. $\Oo_{v'^+}=\Oo_{v'}[\varpi^{-1}]$).\\
(i)$\Rightarrow $(ii): Consider the decompositions below
\[
  \begin{tikzcd}
  K' \arrow[r, "v'^+"] & k_{v'^+} \arrow[r, "\overline{v'}"] & k_{v'}   \\
 K \arrow[r, "v^+"]\arrow[u, no head]   & k_{v^+}  \arrow[r, "\overline{v}"]  \arrow[u, no head]   & k_{v}  \arrow[u, no head]  
     \end{tikzcd}
\]
Set $t=\varpi \mod \mathfrak{m}_{v^+}$. 
By Lemma \ref{frobtrick}, we can write $k_{v'^+}=k_{v^+}(\alpha)$ for some $\alpha \in \Oo_{\overline{v'}}$ with $0\leq \overline{v'}(\delta(\alpha))\leq  \overline{v}t$. Let $a\in \Oo_{v'^+}$ be any lift of $\alpha$. Since $\alpha \in \Oo_{\overline{v'}}$, we get that $a\in \Oo_{v'}$. Since $[K':K]=[k_{v'^+}:k_{v^+}]$, we have that $K'=K(a)$. Note that $m_a(X)$ reduces to $m_{\alpha}(X)$  modulo $\mathfrak{m}_{v'^+}$. Therefore $\delta(\alpha)=\delta(a) \mod \mathfrak{m}_{v'^+}$ and hence $0\leq  v'(\delta(a)) \leq  v\varpi $. It follows that $a$ has the desired properties.\\
(ii)$\Rightarrow$(i): Observe that $\delta(a) \in \Oo_{v'^+}^{\times}$ and conclude from Fact \ref{unramfact}(iii)$\Rightarrow$(i).
\end{proof}
%
\bl \label{tamingmixed}
Let $(K,v)$ be a henselian valued field of residue characteristic $p>0$ such that $\Oo_v/(p)$ is semi-perfect. Let $(K',v')/(K,v)$ be a finite extension with $e(v'/v)=1$. Then for each $m\in \N^{>0}$, there exists $a \in \Oo_{v'}$ with $0\leq v'(\delta(a))\leq 1/m\cdot  vp$ and $K'=K(a)$.
\el 
\begin{proof}
We assume that $\text{char}(K)=0$; if $\text{char}(K)=p$ the conclusion is clear. For fixed $m$, the desired conclusion can be expressed as a first-order sentence in $L_{\text{val}}$ with parameters for the coefficients of a minimal polynomial for $K'/K$ (cf. the proof of Lemma 2.4 \cite{AnscombeJahnkeNIP}). We can therefore assume that $(K,v)$ is $\aleph_1$-saturated. 

Consider the standard decompositions for $K$ and $K'$ with respect to $p$ 
\[
  \begin{tikzcd} 
 K' \arrow[r,"v_0'"] & K'^\circ \arrow[r, "\overline{w'}"] & k_{w'}  \arrow[r, "\overline{v'}"] & k_{v'}  \\
  K\arrow[u, no head] \arrow[r,"v_0"] & K^\circ \arrow[u, no head] \arrow[r,"\overline{w}"] & k_{w}  \arrow[r,"\overline{v}"]  \arrow[u, no head]  & k_{v} \arrow[u, no head] 
     \end{tikzcd}
\]
By Lemma \ref{sphericalcompletelemma} and Fact \ref{sphericaldefect}, we get that $\overline{w}$ is henselian defectless. Then $w=v_0\circ \overline{w}$ is also henselian defectless, being a composite of such valuations. The semi-perfect ring $\Oo_v/(p)$ surjects onto $\Oo_{\overline{v}}$ and thus $k_w=\text{Frac}(\Oo_{\overline{v}})$ is perfect. Therefore $k_{w'}/k_w$ is finite separable. Since $e(w'/w)\cdot e(\overline{v'}/\overline{v})=e(v'/v)=1$, we also get that $e(w'/w)=1$.
We conclude that $(K',w')/(K,w)$ is unramified. By Fact \ref{unramfact}(ii), there exists $a\in \Oo_{w'}$ such that $\delta(a) \in \Oo_{w'}^{\times}$ and $K'=K(a)$. We can even assume $a\in \Oo_{v'}$ by running the proof of Fact \ref{unramfact}(i)$\Rightarrow$(ii) with $\alpha \in \Oo_{\overline{v}}$. Since $\delta(a)\in \Oo_{w'^{\times}}$, we have in particular that $0\leq v'(\delta(a))\leq 1/m\cdot  vp$. Therefore the element $a$ has the desired properties.
\end{proof}
The next result is crucial in what follows. We stress that it is valid (and useful) even in the degenerate where $\Oo_v[\varpi^{-1}]=K$.
\bp \label{equivdefectless}
Let $(K,v)$ be a henselian valued field of residue characteristic $p>0$ such that $\Oo_v/(p)$ is semi-perfect. Given $\varpi \in \mathfrak{m}_v$, the following are equivalent: 
\begin{enumerate}[label=(\roman*)]
\item $\Oo_v[\varpi^{-1}]$ is defectless.  

\item For any finite extension $(K',v')/(K,v)$ with $e(v'/v)=1$, there exists $a \in \Oo_{v'}$ with $0\leq v(\delta(a))\leq  v\varpi$ and $K'=K(a)$.
\item $\Oo_v[\varpi^{-1}]$ is algebraically maximal.
\end{enumerate}
\ep  
\begin{proof}
If $vp<m\cdot v \varpi$ for some $m\in \N$, then all clauses hold automatically. Indeed, (i) and (iii) hold because $\Oo_{v}[\varpi^{-1}]$ is of residue characteristic zero and (ii) holds by Lemma \ref{tamingmixed}. We therefore assume that $vp>\Z v \varpi$ for the rest of the proof. 

Write $\Oo_{v^+}=\Oo_v[\varpi^{-1}]$ and consider the decomposition
$$K \stackrel{v_0}\longrightarrow K^\circ \stackrel{\overline{v^+}} \longrightarrow k_{v^+} \stackrel{\overline{v}} \longrightarrow k_{v} $$ 
where $v_0$ is the finest coarsening of residue characteristic $0$ in case $\text{char}(K)=0$ and is the trivial valuation in case $\text{char}(K)=p$. Since $vp>\Z v\varpi$, we have that $p\Oo_v \subseteq \mathfrak{m}_{v^+}$ and hence the semi-perfect ring $\Oo_v/(p)$ surjects onto $\Oo_{\overline{v}}=\Oo_v/\mathfrak{m}_{v^+}$. It follows that $k_{v^+}$ is a perfect field of characteristic $p$. We also write $v^{\circ}$ for the valuation induced by $v$ on $K^{\circ}$. Since $vp$ is not minimal positive in $\Gamma_v$ and $\Oo_v/(p)$ is semi-perfect, we get that $\Gamma_{v^{\circ}}$ is $p$-divisible.  \\
(i)$\Rightarrow$(ii): Let $(K',v')/(K,v)$ be such that $e(v'/v)=1$. Consider an analogous decomposition for $K'$ to the one above for $K$. Since 
$$e(v'^+/v^+)\cdot e(\overline{v'}/\overline{v})=e(v'/v)=1$$ 
we get that $e(v'^+/v^+)=1$. The extension $\Oo_{v'^+}/\Oo_{v^+}$ is defectless by assumption. Since $k_{v^+}$ is perfect, the extension $k_{v'^+}/k_{v^+}$ is finite separable. This means that $K'/K$ is unramified with respect to $\Oo_{v^+}$. We conclude from Lemma \ref{trickyelementaryext}(i)$\Rightarrow$(ii). \\
(ii)$\Rightarrow$(iii): Let $(K',v'^+)/(K,v^+)$ be an immediate extension and consider the diagram of places
\[
  \begin{tikzcd} 
 K' \arrow[r,"v_0'"] & K'^\circ \arrow[r, "\overline{v'^+}"] & k_{v'^+}  \arrow[r, "\overline{v'}"] & k_{v'}  \\
  K\arrow[u, no head] \arrow[r,"v_0"] & K^\circ \arrow[u, no head] \arrow[r,"\overline{v^+}"] & k_{v^+}  \arrow[r,"\overline{v}"]  \arrow[u, no head]  & k_{v} \arrow[u, no head] 
     \end{tikzcd}
\]
Since $e(v_0'/v_0)\cdot e(\overline{v'^+}/\overline{v^+})=e(v'^+/v^+)=1$, we get $e(v_0'/v_0)=1$. Since  $[K':K]$ is a power of $p$ and $\Gamma_{v^{\circ}}$ is $p$-divisible, we get that $e(v'^{\circ}/v^{\circ})=1$. This implies that 
$$e(v'/v)=e(v_0'/v_0)\cdot e(v'^{\circ}/v^{\circ})=1$$ 
By (ii), there exists $a \in \Oo_{v'}$ such that $0\leq v(\delta(a))\leq  v\varpi$ and $K'=K(a)$. By Lemma \ref{trickyelementaryext}(ii)$\Rightarrow$(i), we get that $(K',v'^+)/(K,v^+)$ is unramified. This forces $K=K'$.\\
(iii)$\Rightarrow$(i): 
By assumption $(K,v^+)$ is algebraically maximal. Note that $\Gamma_{v^+}$ is roughly $p$-divisible, being a quotient of the roughly $p$-divisible group $\Gamma_{v}$. Recall that $k_{v^+}$ is perfect. We conclude that $(K,v^+)$ is roughly tame. By Remark \ref{roughtamerem}(iii), roughly tame fields are defectless.
\end{proof}
\subsection{An elementary class of \quotes{almost tame} fields} \label{nicedeeplyram}
\begin{definition} \label{classc}
Fix a prime $p$. Let $\mathcal{C}$ be the class of valued fields $(K,v)$ together with a distinguished $\varpi \in \mathfrak{m}_v \backslash \{0\}$, such that: 
\begin{enumerate}
\item $(K,v)$ is a henselian valued field of residue characteristic $p$.

\item The ring $\Oo_v/(p)$ is semi-perfect.

\item The valuation ring $\Oo_v[\varpi^{-1}]$ is algebraically maximal.
\end{enumerate}
Structures in $\mathcal{C}$ will naturally be construed as $L_{\text{val}}(\varpi)$-structures. 
\end{definition} 

Given a language $L$ and a class $\mathcal{D}$ of $L$-structures, we say that $\mathcal{D}$ is an \textit{elementary class} in $L$ if $\mathcal{D}$ is the class of models of some $L$-theory $T$. It may not be immediately obvious that $\mathcal{C}$ is an elementary class. This is where our work from the previous section will pay off. 
\bp \label{defectelementary}
The class $\mathcal{C}$ is an elementary class in $L_{\text{val}}(\varpi)$. In particular, it is closed under ultraproducts and elementary equivalence.
\ep 
\begin{proof}
For condition (3) of $\mathcal{C}$ use Proposition \ref{equivdefectless}(ii). The rest is standard. 
\end{proof}
Beware that condition (3) is not elementary, even in conjunction with (1). Let us give an example of an elementary extension of henselian fields $(K_0,v_0)\preceq (K,v)$ and $t\in \mathfrak{m}_{v_0}$ such that $\Oo_{v_0}[t^{-1}]$ is algebraically maximal but $\Oo_v[t^{-1}]$ is not. 
\begin{example}
We start with a construction due to F. K. Schmidt (cf. Example 3.1 \cite{kuhlmann2010defect}). Let $s\in \F_p(\!(t^p)\!)$ be transcendental over $\F_p(t)$ and $K_0$ be the henselization of $\F_p(t,s)$ inside $\F_p(\!(t)\!)$. Then $K_0(s^{1/p})/K_0$ is an inseparable immediate extension, so $(K_0,v_0)$ is not algebraically maximal. On the other hand, $\Oo_{v_0}[t^{-1}]=K_0$ is trivial hence algebraically maximal.  

Now let $(K,v)$ be an $\aleph_1$-saturated elementary extension of $(K_0,v_0)$. Write $\Oo_{v^+}=\Oo_v[t^{-1}]$ and  $\overline{v}$ for the induced valuation of $v$ on $k_{v^+}$. Note that $(k_{v^+},\overline{v})$ is discrete. By Fact \ref{sphericalcompletelemma}, we get that $(k_{v^+},\overline{v})$ is (spherically) complete, hence isomorphic to $(\F_p(\!(z)\!),v_z)$. We thus have a decomposition as follows
\[
  \begin{tikzcd} 
 K \arrow[r, "v^+"] &\F_p(\!(z)\!) \arrow[r, "v_z"] & \F_p  
     \end{tikzcd}
\]
Note that $v$ is not algebraically maximal because $K(s^{1/p})/K$ is an immediate extension. Since $v=v^+\circ v_z$ and $v_z$ is algebraically maximal, this implies that $v^+$ is not algebraically maximal. 
\end{example}
The class $\mathcal{C}$ may be viewed as an interpolation between perfectoid fields and henselian (roughly) deeply ramified fields (cf.  \S 6 \cite{GR}, \cite{KuhlBla}):
\begin{rem} \label{comparisonwithotherclasses}

\begin{enumerate}[label=(\roman*)]
\item Any perfectoid field $(K,v)$ equipped with any $\varpi \in \mathfrak{m}_v \backslash \{0\}$ is in $\mathcal{C}$. Note that the trivial valuation ring $\Oo_v[\varpi^{-1}]=K$ is automatically algebraically maximal.

  \item  Let $(K,v)$ be a mixed characteristic valued field with $vp$ not being minimal positive in $\Gamma_v$. Then $(K,v)$ together with $\varpi=p$ is in $\mathcal{C}$ if and only if $(K,v)$ is henselian and roughly deeply ramified. Deeply ramified fields are a bit more restrictive in that discrete archimedean components are not allowed in $\Gamma_v$. 
   
\item Condition (2) forces valued fields in $\mathcal{C}$ to be perfect. Positive characteristic henselian deeply ramified fields need not be perfect, only their completions are. 

\item The existence of $\varpi \in \mathfrak{m}_v \backslash \{0\}$ such that $\Oo_v[\varpi^{-1}]$ is algebraically maximal is not guaranteed for henselian deeply ramified fields of positive characteristic.  
\end{enumerate}
These deviations from the class of deeply ramified fields are actually crucial for having Ax-Kochen/Ershov principles down to $\Oo_v/(\varpi)$ and value group; see Observation \ref{wrongake} and Example \ref{positiverem}.
\end{rem}
\bl \label{trickremark}
Suppose that $(K,v)$ together with $\varpi$ is in $\mathcal{C}$. Let $w$ be any coarsening of $v$ with $w\varpi=0$. Write $\overline{v}$ for the induced valuation of $v$ on $k_w$ and $t=\varpi \mod \mathfrak{m}_w$. Then $(k_w,\overline{v})$ together with $t$ is in $\mathcal{C}$. 
\el 
\begin{proof}
Since $(K,v)$ is henselian, the same is true for $(k_w,\overline{v})$. Moreover, the residue ring $\Oo_{\overline{v}}/(t)$ is semi-perfect because $\Oo_v/(\varpi)$ maps isomorphically to it via the residue map of $w$.
The only non-trivial thing to check is that $\Oo_{\overline{v}}[t^{-1}]$ is algebraically maximal. Write $\Oo_{v^+}=\Oo_v[\varpi^{-1}]$ and $\Oo_{\overline{v}^+}=\Oo_{\overline{v}}[t^{-1}]$. 
Consider the diagram of places below
\[
\begin{tikzcd}[column sep=3.5em]
K \arrow[rr, bend left, "v^+"] \arrow[r, "w"] & k_w \arrow[r, "\overline{v}^+"] & k_{v^+}
\end{tikzcd}
\]
Since $\Oo_{v^+}$ is algebraically maximal and $v^+=w\circ \overline{v}^+$, it follows easily that $\Oo_{\overline{v}^+}$ is also algebraically maximal. 
\end{proof}
\bc \label{trickremark1}
\begin{enumerate}[label=(\roman*)]
\item For each $i\in \omega$, let $(K_i,v_i)$ be a perfectoid field of a fixed residue characteristic $p>0$ and $\varpi_i\in \mathfrak{m}_{v_i}$. Let $(K_U,v_U)$ be an ultraproduct and $\pi=\ulim \varpi_i$. Then $(K_U,v_U)$ equipped with $\pi$ is in $\mathcal{C}$. 
\item Let $(K,v)$ be a perfectoid field and $(K_U,v_U)$ be a non-principal ultrapower. Then $(K_U,v_U)$ equipped with any $\pi \in \mathfrak{m}_{v_U}$ is in $\mathcal{C}$. 
\item In the setting of (ii), let $w$ be any coarsening of $v_U$ with $w\pi=0$. Write $\overline{v}$ for the induced valuation of $v_U$ on $k_w$ and $t=\pi \mod \mathfrak{m}_w$ . Then $(k_w,\overline{v})$ equipped with $t$ is also in $\mathcal{C}$. 
\end{enumerate}
\ec 
\begin{proof}
For (i) use Proposition \ref{defectelementary} and Remark \ref{comparisonwithotherclasses}(i). Part (ii) is a special case of (i) for $K_i=K$. Part (iii) follows from (ii) and Lemma \ref{trickremark}.
\end{proof}
\subsection{Non-standard Tate/Gabber-Ramero}
Recall the definition of roughly tame fields from \S \ref{modeltheoryoftame} and of regularly dense ordered abelian groups from \S \ref{oagsection}.
\bt \label{nonstdtategr}
Let $(K_0,v_0)$ be a henselian valued field of residue characteristic $p>0$ such that $\Oo_{v_0}/(p)$ is semi-perfect and $\varpi\in \mathfrak{m}_{v_0}$ be such that $\Oo_{v_0}[\varpi^{-1}]$ is algebraically maximal. Let $(K,v)$ be an $\aleph_1$-saturated elementary extension of $(K_0,v_0)$ and $w$ be the coarsest coarsening of $v$ with $w\varpi>0$. Then: 
\begin{enumerate}[label=(\roman*)]
\item If $v_0p$ is not minimal positive in $\Gamma_{v_0}$, the valued field $(K,w)$ is roughly tame.

\item If $\Gamma_{v_0}$ is regularly dense, every finite extension $(K',w')/(K,w)$ is unramified. Equivalently, the extension $\Oo_{w'}/\Oo_w$ is finite \'etale. 
\end{enumerate}
 
\et  
\begin{proof}
By Proposition \ref{defectelementary}, we know that $(K,v)$ is also a henselian valued field of residue characteristic $p$, with $\Oo_{v}/(p)$ semi-perfect and that $\Oo_{v}[\varpi^{-1}]$ is algebraically maximal. \\
(i) Write $\Oo_{w^+}=\Oo_w[\varpi^{-1}]$ and consider the standard decomposition with respect to $\varpi$ below
$$K\stackrel{w^+} \longrightarrow k_{w^+} \stackrel{\overline{w}} \longrightarrow k_{w} \stackrel{\overline{v}}\longrightarrow k_{v} $$ 
By assumption, we have that $w^+$ is algebraically maximal.
By Lemmas \ref{sphericalcompletelemma} and \ref{sphericaldefect}, we have that $\overline{w}$ is henselian defectless and hence algebraically maximal. Since $w=w^+\circ \overline{w}$, we get that $w$ is algebraically maximal. The semi-perfect ring $\Oo_v/(p)$ surjects onto $\Oo_{\overline{v}}$ and therefore $k_w$ is perfect. Since $\Oo_{v}/(p)$ semi-perfect and $vp$ is not minimal in $\Gamma_v$, we get that $\Gamma_v$ is roughly $p$-divisible. The same is true for $\Gamma_w$, being a quotient of $\Gamma_v$. We conclude that $(K,w)$ is roughly tame.\\
(ii) Since $\Gamma_v$ is an elementary extension of $\Gamma_{v_0}$, it is regularly dense. Therefore $\Gamma_w$ is divisible, being a proper quotient of $\Gamma_v$. Using (i), this gives that every finite extension of $(K,w)$ is unramified. The moreover part follows from Fact \ref{unramfact}.
\end{proof}
We show how a defect extension transforms into an unramified one:
\begin{example}
Let $K=\F_p(\!(t)\!)^{1/p^{\infty}}$ and $\alpha$ be a root of the Artin-Schreier polynomial $X^p-X-1/t$. The extension $K(\alpha)/K$ is a classical example of a defect extension (cf. Example 3.12 \cite{kuhlmann2010defect}). 
Write $K_U$ for the corresponding ultrapower and $w$ for the coarsest coarsening of $v_U$ with $wt>0$. By \L o\'s, we have that $K_U(\alpha)=K_U(\beta)$, where $\beta$ is a root of the polynomial $X^p-X-1/\tau$ with $\tau=\ulim t^{1/p^n}$. Now observe that $X^p-X-1/\tau$ reduces to an irreducible Artin-Schreier polynomial in $k_{w}$. It follows that $K_U(\alpha)/K_U$ is unramified with respect to $w$. 
\end{example}

\section{Ax-Kochen/Ershov principles} \label{akesec}
We prove Ax-Kochen/Ershov principles for the class $\mathcal{C}$ of \S \ref{nicedeeplyram}. This will later be used to show that $K^{\flat}$ embeds elementarily in $k_w$. We also deduce some model-theoretic applications in \S \ref{newaxkochenphenomena} and \S \ref{proppresbytilt} which are of independent interest. 
\subsection{Ax-Kochen/Ershov for the class $\mathcal{C}$}

\subsubsection{Relative subcompleteness}
\bt \label{relsubperf}
Let $(K,v)\subseteq (K',v'),(K^*,v^*)$ be henselian valued fields of residue characteristic $p>0$ such that:
\begin{itemize}
\item The rings $\Oo_v/(p), \Oo_{v'}/(p)$ and $\Oo_{v^*}/(p)$ are semi-perfect.
\item  There is $\varpi \in \mathfrak{m}_v$ such that $\Oo_v[\varpi^{-1}]$, $\Oo_{v'}[\varpi^{-1}]$ and $\Oo_{v^*}[\varpi^{-1}]$ are algebraically maximal. 
\item $\Gamma_{v}\preceq_{\exists} \Gamma_{v'}$ in $L_{\text{oag}}$. 
\end{itemize}
Then the following are equivalent: 
\begin{enumerate}[label=(\roman*)]
\item $(K',v')\equiv_{(K,v)} (K^*,v^*)$ in $L_{\text{val}}$. 

\item \label{condition2} $\Oo_{v'}/(\varpi )\equiv_{\Oo_v/(\varpi)} \Oo_{v^*}/(\varpi)$ in $L_{\text{rings}}$ and $\Gamma_{v'} \equiv_{\Gamma_v} \Gamma_{v^*}$ in $L_{\text{oag}}$.
\end{enumerate}
Moreover, if $\Gamma_v$ is regularly dense, the condition $\Gamma_{v}\preceq_{\exists} \Gamma_{v'}$ can be omitted. If $\Gamma_{v'}$ and $\Gamma_{v^*}$ are regularly dense, then the value group condition in \ref{condition2} can be omitted. 
\et 
\begin{proof}
We may assume that $vp$ is not minimal positive in $\Gamma_v$ because otherwise this follows from Ax-Kochen/Ershov for unramified mixed characteristic henselian fields with perfect residue field (cf. Theorem 7.18 \cite{vdd}).\\
$(i)\Rightarrow (ii)$: Clear since both $\Oo_v/(\varpi)$ and $\Gamma_v$ are interpretable structures in $(K,v)$ (and likewise for $K'$).\\
$(ii)\Rightarrow (i)$: 
By Proposition \ref{defectelementary}
and the Keisler-Shelah Theorem (Theorem 6.1.15 \cite{changkeisler}), we may
take non-principal ultrapowers of $(K,v)$, $(K',v')$ and $(K^*,v^*)$ with respect to the same ultrafilter and thus assume that $(K,v)$ is $\aleph_1$-saturated
and that we have an isomorphism of rings 
$\Oo_{v'}/(\varpi)  \cong_{\Oo_v/(\varpi)} \Oo_{v^*}/(\varpi)$
and an isomorphism of ordered abelian groups $ \Gamma_{v'} \cong_{\Gamma_v} \Gamma_{v^*}$.

Let $w$ (resp. $w^*$ and $w'$) be the coarsest coarsening of $v$ (resp. $v^*$ and $v'$) such that $w\varpi >0$ (resp. $w^* \varpi >0$ and $w'\varpi>0$). By Lemma \ref{reducedring}, we have that $\Oo_{\overline{v}}=(\Oo_v/(\varpi ))_{\text{red}}$ (resp. $\Oo_{\overline{v'}}=(\Oo_{v'}/(\varpi))_{\text{red}}$ and $\Oo_{\overline{v^*}}=(\Oo_v/(\varpi ))_{\text{red}}$). By our assumption above, we have that 
$\Oo_{v'}/(\varpi)  \cong_{\Oo_v/(\varpi)} \Oo_{v^*}/(\varpi)$.
Modding out by the nilradicals, we get a ring isomorphism 
$$\Oo_{\overline{v'}}  \cong_{\Oo_{\overline{v}}} \Oo_{\overline{v^*}}$$
By passing to the fraction fields, this induces a valued field isomorphism 
$$\phi:  (k_{w'},\overline{v'}) \cong_{(k_{w}, \overline{v})} (k_{w^*}, \overline{v^*})$$
Note that the extension $k_{w'}/k_w$ is automatically separable since $k_w$ is perfect. Similarly, we have an isomorphism of value groups $ \Gamma_{v'} \cong_{\Gamma_v} \Gamma_{v^*}$. Modding out by the maximal convex subgroups not containing $v\varpi$, this descends to an isomorphism of ordered abelian groups $\Gamma_{w'} \cong_{\Gamma_w} \Gamma_{w^*}$. Moreover, by Lemma \ref{valuegrouplemma}, we have that $\Gamma_w\preceq_{\exists} \Gamma_{w'}$ and hence $\Gamma_{w'}/\Gamma_w$ is torsion-free.  

By Theorem \ref{nonstdtategr}(i), the valued fields $(K,w)$, $(K',w')$ and $(K^*,w^*)$ are roughly tame. We now use the resplendent version of Fact \ref{akerelsub}, where the additional structure on the residue fields is given by the induced valuations $\overline{v},\overline{v'}$ and $\overline{v^*}$. We obtain that $(K',w',\overline{v'})\equiv_{(K,w,\overline{v})} (K^*,w^*,\overline{v^*})$. Applying the Keisler-Shelah theorem once again, 
we may replace $K'$ and $K^*$ with suitable $|K|^+$-saturated elementary extensions 
to obtain an isomorphism
$$ \Phi:(K',w',\overline{v'})  \cong_{(K,w)}(K^*,w^*,\overline{v^*}) $$
\textbf{Claim: }$\Phi:(K',v')\to (K^*,v^*)$ is a valued field isomorphism over $(K,v)$. 
\begin{proof}
Since $\Phi$ is a field isomorphism over $K$, it suffices to show that it is valuation preserving. 
Given $x\in \Oo_{w'}$, we write $\overline{x}$ for its residue with respect to $w'$ (and likewise for $w^*$). For any $x\in \Oo_{v'}$, we have 
$$v'(x)\geq 0 \iff w'(x)\geq 0 \mbox{ and } \overline{v'}(x) \geq 0 \iff $$
$$\iff w^*(\Phi(x)) \geq 0 \mbox{ and } \overline{v^*}(\phi(x))\geq 0 \iff $$ 
$$\iff w^*(\Phi(x)) \geq 0 \mbox{ and } \overline{v^*}(\overline{\Phi(x)})\geq 0\iff  v^*(\Phi(x))\geq 0$$
where the penultimate equivalence uses that $\Phi$ induces $\phi$.
\qedhere $_{\textit{Claim}}$ \end{proof}

The moreover part follows from an inspection of the above proof and some basic facts about divisible ordered abelian groups. If $\Gamma_v$ is regularly dense, then $\Gamma_w$ is divisible and therefore existentially closed. Then $\Gamma_w\preceq_{\exists} \Gamma_{w'}$ follows, so the assumption $\Gamma_v\preceq_{\exists} \Gamma_{v'}$ is not needed. If $\Gamma_{v'}$ and $\Gamma_{v^*}$ are regularly dense, then $\Gamma_{w'}\equiv_{\Gamma_w}\Gamma_{w^*}$ automatically follows by quantifier elimination for the theory of divisible ordered abelian groups, so the assumption $\Gamma_{v'}\equiv_{\Gamma_v}\Gamma_{v^*}$ is not needed. 
\end{proof}

\subsubsection{Elementary substructures}
\bt \label{perfprecversion}
Let $(K,v)\subseteq (K',v')$ be two henselian valued fields of residue characteristic $p>0$ such that $\Oo_v/(p)$ and $\Oo_{v'}/(p)$ are semi-perfect. Suppose there is $\varpi \in \mathfrak{m}_v$ such that both $\Oo_v[\varpi^{-1}]$ and $\Oo_{v'}[\varpi^{-1}]$ are algebraically maximal. Then the following are equivalent: 
\begin{enumerate}[label=(\roman*)]
\item $(K,v)\preceq (K',v')$ in $L_{\text{val}}$.

\item \label{condition2} $\Oo_v/(\varpi )\preceq \Oo_{v'}/(\varpi)$ in $L_{\text{rings}}$ and $\Gamma_v\preceq \Gamma_{v'}$ in $L_{\text{oag}}$.
\end{enumerate}
Moreover, if $\Gamma_v$ and $\Gamma_{v'}$ are regularly dense, then the value group condition in \ref{condition2} can be omitted. 
\et 
\begin{proof}
This is a special case of Theorem \ref{relsubperf} for $K=K'$.
\end{proof}
We now stress the importance of $\Oo_v[\varpi^{-1}]$ and $\Oo_{v'}[\varpi^{-1}]$ being algebraically maximal. Given a field $k$ of characteristic $p>0$ and $a\in k$, write $\wp_a(X)=X^p-X-a$ for the associated Artin-Schreier polynomial.
\bob \label{wrongake}
Let $U$ be a non-principal ultrafilter on $\N$. Equip  $K=\F_p(\!(t)\!)^{1/p^{\infty}}$ and $K'=(\F_p(\!(t)\!)_U)^{1/p^{\infty}}$ with the $t$-adic (resp. non-standard $t$-adic) valuation $v$ (resp. $v'$) and view $(K,v)$ as a valued subfield of $(K',v')$. Then $\Gamma_v$ and $\Gamma_{v'}$ are regularly dense, we have $\Oo_K/(t)\preceq \Oo_{K'}/(t)$ but 
$(K,v)\not \preceq  (K',v')$.
\eob
\begin{proof}
A standard computation shows that $\Oo_{K}/(t)\cong \F_p[t^{1/p^{\infty}}]/(t)$. Viewing $t$ as an element in $\Oo_{K'}$ via the diagonal embedding, we also compute 
$$\Oo_{K'}/(t) \cong \varinjlim_{n\in \N} (\F_p[\![t]\!]_U[t^{1/p^n}]/(t))\cong \varinjlim_{n\in \N} \F_p[t^{1/p^n}]/(t) \cong \F_p[t^{1/p^{\infty}}]/(t)$$
Regarding the value groups, we have that $\Gamma_v=\frac{1}{p^{\infty}}\Z$ and $\Gamma_{v'}=\frac{1}{p^{\infty}}\Z_U$. It is not hard to see that $\Gamma_v$ and $\Gamma_{v'}$ are both regularly dense ordered abelian groups. 
Note that $(K,v)$ together with $t$ is in the class $\mathcal{C}$ introduced in \S \ref{nicedeeplyram}. 

We claim that $(K',v')$ together with $t$ is not in $\mathcal{C}$. 
Let $\ell\neq p$ be prime, set $c= \ulim 1/t^{\ell^n} $ and consider the Artin-Schreier extension $L=K'[X]/(\wp_{c}(X))$. \\
\textbf{Claim: }$\Oo_L[t^{-1}]/\Oo_{K'}[t^{-1}]$ is an immediate extension. 
\begin{proof}
Suppose otherwise. The value group of $\Oo_{K'}[t^{-1}]$ is a quotient of $\Gamma_v$ and is therefore divisible. Moreover, the residue field of $\Oo_{K'}[t^{-1}]$ is perfect. We must then have that $\Oo_L[t^{-1}]/\Oo_{K'}[t^{-1}]$ is an unramified Galois extension of degree $p$. We can therefore write $L\cong K'[X]/(\wp_{a}(X))$, for some $a\in K'$ such that $va \geq -N\cdot vt$. We may even assume that $N=1$, by successively extracting $p$-th roots of $a$, if necessary (cf. \S \ref{poschar}). Since  $\wp_{c}(X)$ and $\wp_{a}(X)$ generate the same extension, we get that
$$c-a \in \wp(K') \iff c^{p^n}-a^{p^n}\in \wp(\F_p(\!(t)\!)_U)\mbox{ for some }n\in \N$$
$$ \iff c-a^{p^n}\in \wp(\F_p(\!(t)\!)_U)\mbox{ for some }n\in \N$$
The last equivalence uses that $c^{p^n}-c\in \wp(\F_p(\!(t)\!)_U)$, which can be seen from 
$$c^{p^n}-c=(c^{p^n}-c^{p^{n-1}})+(c^{p^{n-1}}-c^{p^{n-2}})+...+(c^p-c) $$
and by observing that each term lies in $\wp(\F_p(\!(t)\!)_U)$.

Write $a^{p^n}=\ulim b_m$, where $b_m \in \F_p(\!(t)\!)$ with $vb_m\geq -p^n\cdot vt$. For $m>\log_{\ell} (-p^n vt)$, we see that $v(1/t^{\ell^m}-b_m)=-\ell^m$. Since $p\nmid -\ell^m$, we get that $1/t^{\ell^m}-b_m \notin \wp(\F_p(\!(t)\!))$. It follows that  
$$\{m \in \N:1/t^{\ell^m}-b_m \in \wp(\F_p(\!(t)\!)) \} \not \in U$$
which is contrary to the fact that $c-a^{p^n}\in \wp(\F_p(\!(t)\!)_U)$. We conclude that no such $a$ exists and that $\Oo_L[t^{-1}]/\Oo_{K'}[t^{-1}]$ is an immediate extension. 
\qedhere $_{\textit{Claim}}$ \end{proof}
We thus have that $(K',v')$ together with $t$ is not in $\mathcal{C}$. By Proposition \ref{defectelementary}, $\mathcal{C}$ is an elementary class in $L_{\text{val}}(\varpi)$. We conclude that $(K,v)\not \equiv_{(\F_p(t),v_t)} (K',v')$ and in particular $(K,v)\not \preceq (K',v')$.
\end{proof}
\subsubsection{Elementary equivalence (positive characteristic)}

\bt \label{perfakep}
Let $(K,v), (K',v')$ be two perfect henselian valued fields over $(\F_p(t),v_t)$ such that both $\Oo_v[t^{-1}]$ and $\Oo_{v'}[t^{-1}]$ are algebraically maximal. Suppose that
$\Oo_v/(t)\equiv \Oo_{v'}/(t)$ in $L_{\text{rings}}$ and $(\Gamma_v,vt)\equiv  (\Gamma_{v'},v't)$ in $L_{\text{oag}}(vt)$. Then $(K,v)\equiv (K',v')$  in $L_{\text{val}}$.
\et 
\begin{proof}
Similar to Theorem \ref{perfprecversion}, ultimately using Fact \ref{akekuhl}. 
\end{proof}
Recall that deeply ramified fields of positive characteristic need not be perfect, only their completions are. However, the above result does not hold in the deeply ramified setting as the following example shows:
\begin{example} \label{positiverem}
Let $K$ be the completed perfect hull of $ \F_p(\!(t)\!)$, which can also be obtained as the $t$-adic completion of a non-perfect henselian field $K'$. For example, let $z\in \F_p(\!(t)\!)$ be transcendental over $\F_p(t)$ and $K'$ be the henselization of $\F_p(t^{1/p^{\infty}})(z)$ inside the completed perfect hull of $ \F_p(\!(t)\!)$.
Then the side conditions of Theorem \ref{perfakep} are met but $K\not \equiv K'$ since $K$ is perfect while $K'$ is not.
\end{example}
\subsubsection{Existential closedness}

\bt \label{existentialclosedness}
Let $(K,v)$ be a henselian valued field of residue characteristic $p>0$ such that $\Oo_v/(p)$ is semi-perfect and suppose that there is $\varpi \in \mathfrak{m}_v$ such that $\Oo_v[\varpi^{-1}]$ is algebraically maximal. Given $(K,v)\subseteq (K',v')$, the following are equivalent: 
\begin{enumerate}[label=(\roman*)]
\item $(K,v)\preceq_{\exists} (K',v')$ in $L_{\text{val}}$.

\item \label{condition2} $\Oo_v/(\varpi )\preceq_{\exists} \Oo_{v'}/(\varpi)$ in $L_{\text{rings}}$ and $\Gamma_v\preceq_{\exists} \Gamma_{v'}$ in $L_{\text{oag}}$.
\end{enumerate}
Moreover, if $\Gamma_v$ is regularly dense, the value group condition in \ref{condition2} can be omitted.
\et 
\begin{proof}
As in Theorem \ref{perfprecversion}, ultimately using Fact \ref{precversion}.
\end{proof}

\subsection{New Ax-Kochen/Ershov phenomena} \label{newaxkochenphenomena}
Given a field $k$ and an ordered abelian group $G$, we write $k(\!(t^G)\!)$ for the corresponding Hahn series field. 

To motivate our discussion, consider the following three open problems: 
\begin{problem} 
\begin{enumerate}[label=(\roman*)]
\item  Is $(\F_p(t)^h,v_t)\preceq (\F_p(\!(t)\!),v_t)$? 
\item  Is $(\F_p(\!(t)\!),v_t) \preceq (\F_p(\!(t)\!)(\!(z^{\Q})\!), v)$, where $v=v_z\circ v_t$?  
\item Is $(\varinjlim_{n\in \N} \F_{p^n}(\!(t)\!),v_t) \preceq (\overline{\F}_p(\!(t)\!),v_t)$?
\end{enumerate}

\end{problem}
We now prove their \quotes{perfected} variants (over a general perfect base field):
\bc \label{newakephenomena}
Let $k$ be a perfect field with $\text{char}(k)=p>0$. Then:
\begin{enumerate}[label=(\roman*)]
\item $(k(t^{1/p^{\infty}})^h,v_t) \preceq (k(\!(t)\!)^{1/p^{\infty}},v_t) \preceq (\widehat{ k(\!(t)\!)^{1/p^{\infty}} },v_t)$.
\item $(k(\!(t)\!)^{1/p^{\infty}}, v_t) \preceq (k(\!(t)\!)^{1/p^{\infty}}(\!(z^\Q)\!),v)$, where $v=v_z\circ v_t$.
\item $ (\varinjlim k_i(\!(t)\!)^{1/p^{\infty}},v_t) \preceq (\overline{k}(\!(t)\!)^{1/p^{\infty}},v_t)$, where $k_i$ runs over finite extensions of $k$.
\end{enumerate}
\ec 
\begin{proof}
Note that $\Oo_v[t^{-1}]$ is algebraically maximal and that the value group is regularly dense for each of the above fields. We now use Theorem \ref{perfprecversion}: \\
$(i),(ii)$: For all five fields, the residue ring modulo $t$ is equal to $k[t^{1/p^{\infty}}]/(t)$.\\
$(iii)$ In both cases, the residue ring modulo $t$ is equal to $\overline{k}[t^{1/p^{\infty}}]/(t)$.
\end{proof}

\subsection{Properties preserved via tilting} \label{proppresbytilt}
We show that tilting and untilting preserve several model-theoretic relations:
\bc \label{tiltingpreserves}
Suppose $(K,v)$ and $(K',v')$ are two perfectoid fields. Then:
\begin{enumerate}[label=(\roman*)]

\item If $(K_0,v_0)$ is a common perfectoid subfield of $(K,v)$ and $(K',v')$, then 
$$(K,v) \equiv_{(K_0,v_0)} (K',v') \mbox{ in }L_{\text{val}}\iff  (K^{\flat},v^{\flat}) \equiv_{(K_0^{\flat},v_0^{\flat})} (K'^{\flat},v'^{\flat}) \mbox{ in }L_{\text{val}} $$

\item $(K,v)\preceq (K',v')\mbox{  in }L_{\text{val}}\iff (K^{\flat},v^{\flat})\preceq (K'^{\flat},v'^{\flat})\mbox{ in }L_{\text{val}}$.

\item  $(K,v)\equiv (K',v') \mbox{ in }L_{\text{val}} \implies (K^{\flat},v^{\flat})\equiv (K'^{\flat},v'^{\flat}) \mbox{ in }L_{\text{val}}$.

\item  $(K,v)\preceq_{\exists} (K',v')\mbox{  in }L_{\text{val}}\iff (K^{\flat},v^{\flat})\preceq_{\exists} (K'^{\flat},v'^{\flat})\mbox{ in }L_{\text{val}}$.

\end{enumerate}

\ec
\begin{proof}
$(i)$ 
Let $\varpi \in \Oo_{K_0}$ be a pseudo-uniformizer admitting a compatible system of $p$-power roots and set $\varpi^{\flat}=(\varpi,\varpi^{1/p},...) \in \Oo_{K_0^{\flat}}$ (see Remark \ref{scholzremark}). By Lemma \ref{lemscholz1}(iii), we have $\Oo_{K_0}/(\varpi)\cong \Oo_{K_0^{\flat}}/(\varpi^{\flat})$ and similarly for $K$ and $K'$. By Theorem \ref{relsubperf}, we have
$$(K,v)\equiv_{(K_0,v_0)} (K',v') \stackrel{\ref{relsubperf}} \iff \Oo_K/(\varpi )\equiv_{\Oo_{K_0}/(\varpi)} \Oo_{K'}/(\varpi) \iff $$
$$\iff\Oo_{K^{\flat}}/(\varpi^{\flat})\equiv_{\Oo_{K_0^{\flat}}/(\varpi^{\flat})} \Oo_{K'^{\flat}}/(\varpi^{\flat}) \stackrel{\ref{relsubperf}}  \iff (K^{\flat},v^{\flat})\equiv_{(K_0^{\flat},v_0^{\flat})} (K'^{\flat},v'^{\flat})$$
$(ii)$ This is a special case of (i) for $K_0=K$.\\
$(iii)$ Choose $\varpi\in \Oo_K$ (resp. $\varpi'\in \Oo_{K'}$) such that $v\varpi=vp$ (resp. $v'\varpi'=v'p$). Then
$$\Oo_{K^{\flat}}/(\varpi^{\flat})\cong \Oo_K/(p)\equiv \Oo_{K'}/(p) \cong \Oo_{K'^{\flat}}/(\varpi'^{\flat})$$
where the equivalence in the middle follows from our assumption. We conclude from Theorem \ref{perfakep}.  \\
$(iv)$ Similar to $(iii)$, using Theorem \ref{existentialclosedness}.
\end{proof}
The converse of Corollary \ref{tiltingpreserves}(iii) fails in general because of the ambiguity of untilting:
\begin{example} \label{manyuntiltsrem}
 Take $K=\widehat{\Q_p(p^{1/p^{\infty}})}$ and $K'=\widehat{\Q_p(\zeta_{p^{\infty}})}$ and recall that $K^{\flat}\cong K'^{\flat}$ from Example \ref{perfex}. It is easy to check that $K\not \equiv K'$, e.g., by observing that $p^{1/p}\notin K'$.
\end{example}
Recall that untilting becomes unambiguous by specifying some $\xi \in W(\Oo_{K^{\flat}})$ (cf. \S \ref{methodperf}). We will use this to obtain a converse to Corollary \ref{tiltingpreserves}(iii). First, recall the following result by van den Dries:
\bt [van den Dries]{\label{thO/p^nO}} 
Let $(K,v),(K',v')$ be two henselian valued fields of mixed characteristic. Then $(K,v)\equiv (K',v')$ in $L_{\text{val}}$ if and only if $\mathcal{O}_v/p^n \Oo_v\equiv \Oo_{v'}/p^n\Oo_{v'}$ in $L_{\text{rings}}$ for all $n\in \N$ and $(\Gamma_v,vp)\equiv (\Gamma_{v'},v'p)$ in $L_{oag}$ together with a constant for $vp$.
\et 
\begin{proof}
See the proof of Theorem 2.1.5 \cite{KK1}.
\end{proof}
For the correspondence between distinguished elements (up to units) $\xi \in W(\Oo_{K^{\flat}})$ and untilts of $K^{\flat}$, see \S 3.5.5 \cite{KK1}.
\bp \label{fixtypewitt}
Say $(K,v)$ and $(K',v')$ be two perfectoid fields. Let $\xi_K=(\xi_0,\xi_1,...)$ (resp. $\xi_{K'}=(\xi'_0,\xi'_1,...)$) be a distinguished element in $W(\Oo_{K^{\flat}})$ (resp. $W(\Oo_{K'^{\flat}})$) associated to $K$ (resp. $K'$) such that 
$$\text{tp}_{(K^{\flat},v^{\flat})}(\xi_0,\xi_1,...)=\text{tp}_{(K'^{\flat},v'^{\flat})}(\xi_0',\xi_1',...)$$ 
Then $(K,v)\equiv (K',v')$ in $L_{\text{val}}$. 
\ep 
\begin{proof}
Note that our hypothesis on the types in particular implies that $(K^{\flat},v^{\flat})\equiv (K'^{\flat},v'^{\flat})$ in $L_{\text{val}}$. We start with the following:\\
\textbf{Claim}: We have $\Oo_K/(p^n)\equiv \Oo_{K'}/(p^n)$,  for all $n\in \N$.
\begin{proof}
Fix $n\in \N$. We have $\Oo_K/(p^n)\cong W_n(\Oo_{K^{\flat}})/(\overline{\xi_K})$ and similarly $\Oo_{K'}/(p^n)\cong W_n(\Oo_{K'^{\flat}})/(\overline{\xi_{K'}})$, where $\overline{\xi}_K$ is the truncated Witt vector $(\xi_0,\xi_1,...,\xi_{n-1})$  and $\overline{\xi}_{K'}$ is the truncated Witt vector $(\xi_0',\xi_1',...,\xi'_{n-1})$. By Lemma 3.4.5 \cite{KK1}, the ring $W_n(\Oo_{K^{\flat}})/(\overline{\xi_K})$ (resp. $W_n(\Oo_{K'^{\flat}})/(\overline{\xi_{K'}})$) is interpretable in $K^{\flat}$ (resp. $K'$) with parameter set $\{\xi_0,\xi_1,...,\xi_{n-1}\}$ (resp. $\{\xi_0',\xi_1',...,\xi_{n-1}'\}$). Moreover, the interpretation is uniform, i.e., it does not depend on $K$ or $\overline{\xi}_K$. Since $(K^{\flat},v^{\flat})\equiv (K'^{\flat},v'^{\flat})$ and $\text{tp}_{(K^{\flat},v^{\flat})}(\xi_0,\xi_1,...,\xi_{n-1})=\text{tp}_{(K'^{\flat},v'^{\flat})}(\xi_0',\xi_1',...,\xi_{n-1}')$, we get that  
$$W_n(\Oo_{K^{\flat}})/(\overline{\xi_K})\equiv W_n(\Oo_{K'^{\flat}})/(\overline{\xi_{K'}})$$
We conclude that $\Oo_K/(p^n)\equiv \Oo_{K'}/(p^n)$.
\qedhere $_{\textit{Claim}}$ \end{proof}
Observe that 
$$(\Gamma_v,vp)\cong (\Gamma_{v^{\flat}},v^{\flat}\xi_0)\equiv (\Gamma_{v^{\flat}},v^{\flat}\xi_0')\cong (\Gamma_{v'},v'p)$$ 
where the elementary equivalence in the middle uses that $\text{tp}_{(K^{\flat},v^{\flat})}(\xi_0)=\text{tp}_{(K'^{\flat},v'^{\flat})}(\xi_0')$. We conclude that $(K,v)\equiv (K',v')$ in $L_{\text{val}}$ from van den Dries' Theorem \ref{thO/p^nO}.
\end{proof}
We also obtain a converse to the decidability transfer of \cite{KK1}:
\bc \label{from0top}
Let $(K,v)$ be a perfectoid field with tilt $(K^{\flat},v^{\flat})$. Then the $L_{\text{val}}$-theory of $(K^{\flat},v^{\flat})$ is decidable relative to the $L_{\text{val}}$-theory of $(K,v)$.
\ec
\begin{proof}
Let $t\in \Oo_{K^{\flat}}$ be such that $\Oo_K/(p)\cong \Oo_{K^{\flat}}/(t)$ by Lemma \ref{lemscholz1}(iii). By Proposition \ref{defectelementary}, the class of perfect henselian valued fields $(F,u)$ of characteristic $p>0$, such that $\Oo_F[t^{-1}]$ is algebraically maximal is elementary in $L_{\text{val}}(t)$. Moreover, it is recursively axiomatized. We further add a recursive set of axioms to ensure that $\Oo_F/(t)\equiv \Oo_{K^{\flat}}/(t)$. This is possible because $\Oo_{K^{\flat}}/(t)\cong \Oo_K/(p)$ and the latter is interpretable in $(K,v)$. We call $T$ the resulting theory. By Theorem \ref{perfakep}, for any sentence $\phi$ in $L_{\text{val}}$, we have that 
$$T\models \phi \iff (K^{\flat},v^{\flat})\models \phi$$ 
Finally, a brute force enumeration of all proofs from $T$ gives an algorithm for deciding whether $T \models \phi$ or $T \models \lnot \phi$ for any sentence $\phi$ in $L_{\text{val}}$. 
\end{proof}

\section{Proof of the main theorem} \label{modeltheorfontwintsec}
In \S \ref{elementarysubfield}, we exhibit $K^{\flat}$ as an elementary valued subfield of $k_w$. In \S \ref{moreoverpart}, we 
verify that $ K_U-\text{f\'et} \simeq k_w-\text{f\'et}$ restricts to $K-\text{f\'et} \simeq K^{\flat}-\text{f\'et}$ as expected. We then conclude the proof of our main theorem and make a few remarks.
\subsection{$K^{\flat}$ as an elementary subfield of $k_w$} \label{elementarysubfield}
Let $K$ be a perfectoid field. Recall from \S \ref{perfsec} that the underlying set of the tilt is
$$K^{\flat}=\{ (x_n)_{n\in \omega} \in K^{\omega}: x_{n+1}^p=x_n\}$$
\begin{definition}
Let $U$ be an ultrafilter on $\N$ and $ K_U$ be the corresponding ultrapower. We define the \textit{natural} map
 $$\natural: K^{\flat} \to K_U: (x_n)_{n\in \omega} \mapsto \ulim x_n$$
Write $x^{\natural}$ for the image of $x \in K^{\flat}$ under the natural map $\natural$.
\end{definition}

\bl \label{canonicalift}
Let $(K,v)$ be a perfectoid field of residue characteristic $p$ and $\varpi \in \mathfrak{m}\backslash \{0\}$. Let $U$ be a non-principal ultrafilter on $\N$ and $( K_U,v_U)$ be the corresponding ultrapower. Let also $w$ be the coarsest coarsening of $v_U$ such that $w \varpi>0$. Then: 
\begin{enumerate}[label=(\roman*)]
\item We have $\natural (K^{\flat \times}) \subseteq \Oo_w^{\times}$, $\natural (\Oo_{K^{\flat}})\subseteq \Oo_{ v_U}$ and $\natural (\mathfrak{m}_{K^{\flat}})= \natural (\Oo_{K^{\flat}}) \cap \mathfrak{m}_{ v_U}$.
\item The map $\natural$ is a multiplicative morphism which is additive modulo $p$, i.e., $(x+y)^{\natural} \equiv x^{\natural}+y^{\natural} \mod p\Oo_{ v_U}$ for any $x,y \in \Oo_{K^{\flat}}$.

\item The map 
$$\iota: (K^{\flat},v^{\flat})\to (k_w,\overline{v}): x\mapsto x^{\natural}  \mod \mathfrak{m}_w $$ 
is a well-defined valued field embedding.
\end{enumerate}
\el 
\begin{proof}
(i) For any $(x,x^{1/p},...)\in K^{\flat \times }$ and $m\geq 1$, we have that
$$v(x^{1/p^n}) \in (-1/m\cdot v\varpi, 1/m\cdot v\varpi)$$  
for all sufficiently large $n$. Since $U$ is non-principal, it contains every cofinite subset of $\N$. It follows that $x^{\natural}\in \Oo_{w}^{\times}$. By definition of $v^{\flat}$, we also have that  $(x,x^{1/p},...) \in \Oo_{K^{\flat}}$ (resp. $x\in \mathfrak{m}_{K^{\flat}}$) if and only if $x^{1/p^n} \in \Oo_{K}$ (resp. $x^{1/p^n} \in \mathfrak{m}_{K}$) for each $n\in \N$. This implies that $\natural (\Oo_{K^{\flat}})\subseteq \Oo_{ v_U}$ and $\natural (\mathfrak{m}_{K^{\flat}})= \natural (\Oo_{K^{\flat}}) \cap \mathfrak{m}_{ v_U}$. \\  
(ii) By Lemma \ref{lemscholz1}(iv), the map
$$\sharp: K^{\flat}\to K:(x_n)_{n\in \omega}\mapsto x_0$$
is a multiplicative morphism whose restriction $\sharp:\Oo_{K^{\flat}}\to \Oo_K$ is additive modulo $p$. For each $n$, the same is also true for  
$$\sharp_n: K^{\flat}\to K: (x_n)_{n\in \omega} \mapsto x_n$$
It follows that $\natural$ has the desired properties by \L o\'s' theorem.\\
(iii) Since $\natural (K^{\flat \times}) \subseteq \Oo_w^{\times}$, we have a well-defined map 
$$\iota: K^{\flat}\to k_w:x\mapsto x^{\natural}  \mod \mathfrak{m}_w$$ 
By (ii), the map $\natural$ is a multiplicative morphism which is additive modulo $p$. In particular, it is additive modulo $\mathfrak{m}_w$ and therefore $\iota$ is a field embedding. Since $\natural (\Oo_{K^{\flat}})\subseteq \Oo_{ v_U}$ and $\Oo_{\overline{v}}=\Oo_{v_U}\mod \mathfrak{m}_w$, we get that $\iota(\Oo_{K^{\flat}})\subseteq \Oo_{\overline{v}}$. Similarly, since $\natural (\mathfrak{m}_{K^{\flat}})= \natural (\Oo_{K^{\flat}}) \cap \mathfrak{m}_{ v_U}$, we get that $\iota(\mathfrak{m}_{K^{\flat}})=\iota(\Oo_{K^{\flat}}) \cap  \mathfrak{m}_{\overline{v}}$. 
We conclude that $\iota: (K^{\flat},v^{\flat})\to (k_w,\overline{v})$ is a valued field embedding.
\end{proof}
%
It is now convenient to work with a pseudo-uniformizer $\varpi$ which admits a compatible system of $p$-power roots (Remark 3.5 \cite{Scholze}). We write $\varpi^{\flat}=(\varpi,\varpi^{1/p},...)\in K^{\flat}$ and $ \pi=\ulim \varpi^{1/p^n} \in K_U$. 
\bt  \label{keylemmainfthick}
Let $(K,v)$ be a perfectoid field of residue characteristic $p$ and $\varpi $ be as above. Let $U$ be a non-principal ultrafilter on $\N$ and $( K_U,v_U)$ be the corresponding ultrapower. Let $w$ be the coarsest coarsening of $v_U$ such that $w \varpi>0$.
Then: 
\begin{enumerate}[label=(\roman*)]
\item  The map $\natural$ induces an elementary embedding of rings 
$$\Oo_{K^{\flat}}/(\varpi^{\flat})\to \Oo_{ v_U}/(\pi):x+(\varpi^{\flat}) \mapsto x^{\natural} + (\pi)$$

\item The map $\iota$ induces an elementary embedding of rings
$$  \Oo_{K^{\flat}}/(\varpi^{\flat})\to \Oo_{\overline{v}}/(\overline{\pi}): x+(\varpi^{\flat})\mapsto \iota(x)+(\overline{\pi})$$ 
where $\overline{\pi}$ is equal to the image of $\pi$ modulo $\mathfrak{m}_w$. 
\item The map $\iota:(K^{\flat},v^{\flat})\to (k_w,\overline{v})$ is an elementary embedding of valued fields.
\end{enumerate}
\et  
\begin{proof}
(i) For each $i\in \N$, we have the iterated Frobenius map 
$$\Phi_{i}:\Oo_K/(\varpi)\to \Oo_K/(\varpi):x +(\varpi)\mapsto x^{p^i}+(\varpi) $$
Note that $\Phi_i$ is surjective and $\text{Ker}(\Phi_i)=(\varpi^{1/p^i})$. Passing to ultralimits, we get a non-standard Frobenius map
$$\Phi_{\infty}:\Oo_{v_U}/(\varpi)\to \Oo_{v_U}/(\varpi):\ulim x_i +(\varpi)\mapsto \ulim x_i^{p^i}+(\varpi)$$ 
Note that $\Phi_{\infty}$ is surjective and $\text{Ker}(\Phi_{\infty})=(\pi)$. Thus, $\Phi_{\infty}$ induces an isomorphism 
$\overline{\Phi}_{\infty}:\Oo_{v_U}/(\pi) \stackrel{\cong}\rightarrow \Oo_{v_U}/(\varpi)$. We now exhibit the map $\Oo_{K^{\flat}}/(\varpi^{\flat})\to \Oo_{ v_U}/(\pi)$ as a composition of the maps below
$$\Oo_{K^{\flat}}/(\varpi^{\flat})\stackrel{\cong}\longrightarrow \Oo_K/(\varpi)\stackrel{\delta} \longrightarrow \Oo_{v_U}/(\varpi)
\xrightarrow{\cong}  \Oo_{ v_U}/(\pi)$$
Here $\delta$ is the diagonal embedding 
$$\delta:\Oo_K/(\varpi) \to  \Oo_{v_U}/(\varpi): x +(\varpi) \mapsto \ulim x + (\varpi)$$  
which is elementary by \L o\'s. The rightmost isomorphism is the inverse of $\overline{\Phi}_{\infty}$. The composite map is equal to the desired map and is elementary.\\
(ii) By (i), we know that the map 
$$\Oo_{K^{\flat}}/(\varpi^{\flat})\to \Oo_{ v_U}/(\pi):x+(\varpi^{\flat}) \mapsto x^{\natural} + (\pi)$$ 
is an elementary embedding of rings.  We also have a natural isomorphism 
$$\Oo_{ v_U}/(\pi) \to \Oo_{\overline{v}}/(\overline{\pi}):x+(\pi)\mapsto \overline{x} + (\overline{\pi})$$ 
induced from the reduction map of $w$.
Note that $\overline{\pi}= \iota(\varpi^{\flat})$, so that the map in question is the composite map 
$$\Oo_{K^{\flat}}/(\varpi^{\flat})\stackrel{\preceq } \longhookrightarrow \Oo_{ v_U}/(\pi)\stackrel{\cong} \longrightarrow \Oo_{\overline{v}}/(\overline{\pi})$$ 
which is elementary, being the composition of elementary maps.\\
(iii) We choose the elements $\varpi^{\flat}\in K^{\flat}$ and $\overline{\pi}\in k_w$ as distinguished pseudo-uniformizers. We identify $(K^{\flat},v^{\flat})$ with a valued subfield of $(k_w,\overline{v})$ via $\iota$, thereby identifying $\varpi^{\flat}$ with $\overline{\pi}$. Next, we verify the hypotheses of Theorem \ref{perfprecversion} for $K^{\flat}$ and $k_w$. For $K^{\flat}$ use Remark \ref{comparisonwithotherclasses}(i) and for $k_w$ use Lemma \ref{trickremark1}(iii). The value groups $\Gamma_{v^{\flat}}$ and $\Gamma_{\overline{v}}$ are both regularly dense. For the latter, note that $\Gamma_{\overline{v}}$ is a convex subgroup of the regularly dense group $\Gamma_{v_U}$. By part (i), we also have that  $\Oo_{K^{\flat}}/(\varpi^{\flat})\preceq \Oo_{\overline{v}}/(\overline{\pi})$. By Theorem \ref{perfprecversion}, we conclude that $(K^{\flat},v^{\flat})\preceq (k_w,\overline{v})$.
\end{proof}
\subsection{Specializing to Fontaine-Wintenberger} \label{moreoverpart}
Recall the definition of the discriminant from \S \ref{unramext}.
\bl \label{p-throots}
Let $(K,v)$ be a henselian valued field of residue characteristic $p>0$ and $\varpi \in \Oo_K$ be such that $0<v\varpi\leq vp$. Let $f(X)=X^n+a_{n-1}X^{n-1}+...+a_0\in \Oo_v[X]$ be an irreducible polynomial with $0\leq v(\text{disc}(f) )<v\varpi$ and $g(X)=X^n+a_{n-1}'X^{n-1}+...+a_0'\in \Oo_v[X]$ be such that $a_i'^p\equiv a_i \mod (\varpi)$. Then $v(\text{disc}(g))=\frac{1}{p}v(\text{disc}(f))$ and $K[X]/(f(X))\cong K[X]/(g(X))$.
\el 
\begin{proof}
Write $L=K(\beta)$, with $\beta$ a root of $g(X)$. Recall that the discriminant can be expressed as a polynomial in the coefficients. Since $0< v\varpi \leq vp$ and $a_i'^p\equiv a_i \mod (\varpi)$, we deduce that $\text{disc}(g)^p \equiv \text{disc}(f) \mod (\varpi)$. Since $0\leq v(\text{disc}(f))<v\varpi$, it follows that $v(\text{disc}(g))=\frac{1}{p}v(\text{disc}(f))$. Since $a_i'^p\equiv a_i \mod (\varpi)$ and $0<v\varpi\leq vp$, we compute that
$$0=g(\beta)^p\equiv f(\beta^p) \mod (\varpi)\Rightarrow f(\beta^p)\equiv 0 \mod (\varpi)$$
Since $v(f(\beta^p))>v(\text{disc}(f))$, we deduce the existence of $\alpha \in L$ with $f(\alpha)=0$ by the Hensel-Rychlik Lemma (cf. Lemma 12 \cite{AK12}). The conclusion follows.
\end{proof}
We henceforth identify $K^{\flat}$ with its image in $k_w$ via the embedding $\iota$ of Lemma \ref{canonicalift}(iii). Given $f(X)\in K^{\flat}[X]$, write $f^{\natural}(X)\in K_U[X]$ for the polynomial whose coefficients are obtained by applying $\natural$ to the coefficients of $f(X)$.
\bc \label{idenitfyingGK}
Let $(K,v)$ be a perfectoid field and $U$ be a non-principal ultrafilter on $\N$. Then:
\begin{enumerate}[label=(\roman*)]
\item Let $f(X)\in K^{\flat}[X]$ be an irreducible monic polynomial. Then $f^{\natural}(X)$ generates a finite extension of $K_U$ coming from $K$.
\item Conversely, let $L/K$ be a finite extension. Then $L_U/K_U$ induces a finite extension of $k_w$ coming from $K^{\flat}$.
\end{enumerate}
\ec  
\begin{proof}
(i) Once again, recall that the discriminant of a polynomial is itself an integer polynomial in the coefficients, in a way that does not depend on the field. By Lemma \ref{canonicalift}(ii), we get that $\text{disc}(f^{\natural})\equiv \text{disc}(f)^{\natural} \mod (p)$. By Lemma \ref{canonicalift}(i), we have $\text{disc}(f)^{\natural}\in \Oo_w^{\times}$ and thus $\text{disc}(f^{\natural})\in \Oo_w^{\times}$. Write $f^{\natural}(X)=\ulim f_i(X)$, where $f_i(X)\in K[X]$. By \L o\'s, we get that $0\leq \text{disc}(f_i)<vp$ for almost all $i$. By Lemma \ref{p-throots}, we get that the extensions of $K$ generated by the $f_i$'s eventually stabilize to a fixed extension $L$ of $K$. Therefore, the extension of $K_U$ generated by $f^{\natural}(X)$ comes from $L$.
\\
(ii) By Theorem \ref{nonstdtategr}(ii), we have that $L_U/K_U$ is unramified with respect to $\Oo_w$. By Fact \ref{unramfact}(iv), we get that $L_U$ is generated by some monic irreducible polynomial $F(X)\in \Oo_{v_U}[X]$ with $\text{disc}(F) \in \Oo_{w}^{\times}$. In particular, we have that $0\leq v_U(\text{disc}(F))<v_U\varpi$. By \L o\'s, the extension $L$ can be generated by some monic irreducible polynomial $f(X)\in \Oo_K[X]$ with $0\leq v(\text{disc}(f))<v\varpi$. Since $\sharp:\Oo_{K^{\flat}}\to \Oo_K$ is surjective modulo $p$, we can find a monic $g(X)\in \Oo_{K^{\flat}}[X]$ with $g^{\sharp}(X)\equiv f(X) \mod p \Oo_K$. Note that $L$ is also generated by $g^{\sharp}(X)$ by the Hensel-Rychlik Lemma. By Lemma \ref{p-throots}, it is also generated by $g_n^{\sharp}(X)$, where the coefficients of $g_n(X)\in K^{\flat}[X]$ are $p^n$-th roots of the coefficients of $g(X)$. By \L o\'s, the extension $L_U$ can be generated by $g^{\natural}(X)$ and the conclusion follows.
\end{proof}
This recovers the natural equivalence between the categories of finite extensions of $K$ and $K^{\flat}$, as prescribed by Scholze on pg. 3 \cite{ScholzeSurvey}. We finally put all the pieces together: 
\bt \label{modeltheoreticFont}
Let $(K,v)$ be a perfectoid field and $\varpi \in \mathfrak{m}\backslash \{0\}$. Let $U$ be a non-principal ultrafilter on $\N$ and $(K_U,v_U)$ be the corresponding ultrapower. Let $w$ be the coarsest coarsening of $v_U$ such that $w\varpi>0$. Then: 
\begin{enumerate}[label=(\Roman*)]
\item Every finite extension of $(K_U,w)$ is unramified.

\item The tilt $(K^{\flat},v^{\flat})$ embeds elementarily into $(k_w,\overline{v})$. 

\end{enumerate}
Moreover, the equivalence $ K_U-\text{f\'et} \simeq k_w-\text{f\'et}$ restricts to $K-\text{f\'et} \simeq K^{\flat}-\text{f\'et}$.
\et 
\begin{proof}
Part I is a special case of Theorem \ref{nonstdtategr}(ii). Part II is Theorem \ref{keylemmainfthick}(iii).  For the moreover part, recall that \'etale algebras over a field are finite products of finite separable extensions. It then suffices to check the analogous statement for finite (separable) field extensions, which follows from Corollary \ref{idenitfyingGK}.
\end{proof}
We now make a few comments on Question \ref{questionformal}. Regarding the extent to which the theory of $K^{\flat}$ determines the theory of $K$, we note the following:
\begin{rem}\label{tameakerem} 
\begin{enumerate}[label=(\roman*)] 
\item By Corollary \ref{tiltingpreserves}, given any perfectoid base field $K_0\subseteq K$, the theory of $K^{\flat}$ with parameters from $K_0^{\flat}$ fully determines the theory of $K$ with parameters from $K_0$. As an alternative, we saw in Proposition \ref{fixtypewitt} that the theory of $K^{\flat}$ determines the theory of $K$ once we also specify the type of the Witt vector $\xi_K \in W(\Oo_{K^{\flat}})$. 
\item Without the above stipulations, tilting will typically involve some loss of first-order information (cf. Example \ref{manyuntiltsrem}). This is related to the failure of an Ax-Kochen/Ershov principle for tame fields of mixed characteristic down to residue field and value group, something that was first observed by Anscombe-Kuhlmann \cite{KuhlAns}. To provide a counterexample in our setting, just take two non-elementary equivalent perfectoid fields with the same tilt. Their ultrapowers are still non-elementary equivalent, yet they are both equipped with tame valuations such that the value groups are divisible and the residue fields are both elementary equivalent to the tilt.
\end{enumerate}
\end{rem}
With the aid of the diagram below
\[
  \begin{tikzcd}[column sep=4.5em, row sep=2.5em]
    K_U \arrow[r, "w"]  & k_w \arrow[r, "\overline{v}"]  & k_{v_U} \\
    K \arrow[r, dashed, bend left=10, "\flat"] \arrow[u, "\preceq", no head]   & K^{\flat}\arrow[r, "v^{\flat}"] \arrow[u, "\preceq", no head]   & k_{v} \arrow[u, "\preceq", no head] 
     \end{tikzcd}
\]
 several elementary properties can be readily transferred between $K$ and $K^{\flat}$:
\begin{rem} \label{transferproperties}

Properties (i)-(iii) are well-known, but (iv) appears to be new: 
\begin{enumerate}[label=(\roman*)]
    \item Let ACF denote the theory of algebraically closed fields. We have that 
     $$K\models \text{ACF}\iff K_U\models \text{ACF} \iff k_w\models \text{ACF} \iff K^{\flat}\models \text{ACF}$$ 
     All equivalences are formal except the one in the middle which uses that every finite extension of $(K_U,w)$ is unramified. 
     
     \item More generally, one readily sees from (I) and (II) that $G_K\equiv G_{K^{\flat}}$ in the language of profinite groups of Cherlin-van den Dries-Macintyre \cite{CDM}. Indeed, since in general $k\equiv l$ implies that $G_k\equiv G_l$, we get
     $$G_K\equiv G_{K_U}\cong G_{k_w}\equiv G_{K^{\flat}}$$ 
In fact, this elementary version of Fontaine-Wintenberger admits a very short proof (cf. \S 4.2 \cite{KartasThesis}). However, we really need the moreover part to get the isomorphism $G_K\cong G_{K^{\flat}}$.
     \item We have that 
       $$(K,v)\mbox{ is defectless} \iff (K_U,v_U)\mbox{ is defectless} \iff $$ 
       $$(k_w,\overline{v}) \mbox{ is defectless} 
       \iff (K^{\flat},v^{\flat})\mbox{ is defectless}$$
      All equivalences are formal except the middle one which uses that the defect behaves multiplicatively with respect to composition of places. 
    \item Given $i\in \N$, a field $k$ is called $C_i$ if every $k$-hypersurface of degree $d$ in $d^i$-dimensional projective space has a $k$-rational point. We have that
    $$K \mbox{ is }C_i \iff K_U\mbox{ is }C_i \implies k_w \mbox{ is }C_i
       \iff K^{\flat} \mbox{ is }C_i$$
Again, all implications are formal except the middle one. It is not hard to check that the residue field of any $C_i$ valued field is itself $C_i$. 
\end{enumerate}
\end{rem}
We do not know whether the converse to (iv) holds (even for $i=1$):
\begin{problem}
Suppose $K^{\flat}$ is $C_i$. Is $K$ also $C_i$?
\end{problem}

\subsection{Rank 1 version} \label{rank1version}
Towards transferring information between $K$ and $K^{\flat}$, it is potentially useful to have a version of Theorem \ref{modeltheoreticFont} where $w$ is of rank $1$.  This can be achieved by replacing the ultrapower $K_U$ with its core field $K_U^{\circ}$ (see Remark \ref{corefield}). 
\begin{rem}
\begin{enumerate}[label=(\roman*)]
\item Since $\Z vp$ is cofinal in the value group, we get a well-defined diagonal embedding $(K,v)\hookrightarrow (K_U^{\circ},v_U^{\circ})$. This embedding is also elementary, for instance by Theorem \ref{perfprecversion}. Thus, 
$K_U^\circ$ plays effectively the same role as the ultrapower $K_U$. 

\item The equivalence $K_U-\text{f\'et}\simeq k_w-\text{f\'et}$ factors naturally through $K_U^{\circ}-\text{f\'et}$ and we have that $ K_U^{\circ}-\text{f\'et} \simeq k_w-\text{f\'et}$ restricts to $K-\text{f\'et} \simeq K^{\flat}-\text{f\'et}$. 
\end{enumerate}
\end{rem}
\bt \label{modeltheoreticFont'}
Let $(K,v)$ be a perfectoid field of characteristic $0$. Let $U$ be a non-principal ultrafilter on $\N$ and $(K_U^\circ,v_U^\circ)$ be as above. Let $w$ be the coarsest coarsening of $v^{\circ}$ such that $wp>0$. Then: 
\begin{enumerate}[label=(\Roman*)]
\item The valued field $(K_U^\circ,w)$ is spherically complete with value group $\mathbb{R}$.

\item The tilt $(K^{\flat},v^{\flat})$ embeds elementarily into $(k_w,\overline{v})$. 

\end{enumerate}
Moreover, the equivalence $ K_U^{\circ}-\text{f\'et} \simeq k_w-\text{f\'et}$ restricts to $K-\text{f\'et} \simeq K^{\flat}-\text{f\'et}$.
\et 
The problem of transferring first-order information between $K$ and $K^{\flat}$ is therefore equivalent to the problem of transferring said information between $K_U^{\circ}$ and its residue field $k_w$. 

\section{Almost mathematics revisited}
In this final section, we explain why Theorem \ref{modeltheoreticFont}(I) is a non-standard version of the almost purity theorem over perfectoid valuation rings. More generally, we translate notions from almost ring theory to notions in ring theory. This translation is achieved by means of a functor which associates an object in the almost category to a localization of its ultrapower. 

\subsection{Some background}
We first recall all necessary background from almost mathematics. Our main sources are \cite{GR} and \S 4 \cite{Scholze}. 
\subsubsection{Almost modules}
Let $(K,v)$ be a valued field of rank $1$ with valuation ring $\Oo_K$ and maximal ideal $\mathfrak{m}$ such that $\mathfrak{m}^2=\mathfrak{m}$. The latter condition equivalently means that the valued field $(K,v)$ is non-discrete  (e.g., a perfectoid field).
\begin{definition}
Let $M$ be an $\Oo_K$-module. We say that $x\in M$ is almost zero if $\mathfrak{m}x=0$. The module $M$ is almost zero if all of its elements are almost zero, i.e., $\mathfrak{m}M=0$.
\end{definition}
Recall that, given an abelian category $\mathcal{A}$, a \textit{Serre subcategory} of $\mathcal{A}$ is a nonempty full subcategory $\mathcal{C}$ of $\mathcal{A}$ which is closed under extensions, i.e., given a short exact sequence
$$0\longrightarrow A\longrightarrow B \longrightarrow C \longrightarrow 0 $$
with $A,C\in \mathcal{C}$, we also have $B \in \mathcal{C}$. Write $\Oo_K\lmod$ for the category of $\Oo_K$-modules. The following is easy to show using that $\mathfrak{m}^2=\mathfrak{m}$:
\begin{fact} [Lemma 4.2 \cite{Scholze}] \label{thick}
The full subcategory $\Sigma$ of almost zero $\Oo_K$-modules is a Serre subcategory of $\Oo_K\lmod$.
\end{fact}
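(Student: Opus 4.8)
The statement to prove is Fact \ref{thick}: the full subcategory $\Sigma \subseteq \Oo_K\lmod$ of almost zero modules (those $M$ with $\mathfrak{m}M = 0$) is a Serre subcategory, i.e.\ closed under subobjects, quotients, and extensions. My approach is to verify these three closure properties directly, with the condition $\mathfrak{m}^2 = \mathfrak{m}$ as the only nontrivial input.

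First, closure under subobjects and quotients is immediate: if $\mathfrak{m}M = 0$ and $N \subseteq M$ is a submodule, then $\mathfrak{m}N \subseteq \mathfrak{m}M = 0$; and for a quotient $M/N$ we have $\mathfrak{m}(M/N) = (\mathfrak{m}M + N)/N = 0$. So $\Sigma$ is automatically closed under these operations regardless of the $\mathfrak{m}^2 = \mathfrak{m}$ hypothesis. The content is in the extension property. Suppose
$$0 \longrightarrow A \xrightarrow{\ f\ } B \xrightarrow{\ g\ } C \longrightarrow 0$$
is exact with $A, C \in \Sigma$, so $\mathfrak{m}A = 0$ and $\mathfrak{m}C = 0$. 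I want $\mathfrak{m}B = 0$. Take $b \in B$. Since $g(b) \in C$ and $\mathfrak{m}C = 0$, we get $\mathfrak{m}\cdot g(b) = 0$, i.e.\ $g(\mathfrak{m}b) = 0$, so $\mathfrak{m}b \subseteq \ker g = \operatorname{im} f \cong A$. Hence $\mathfrak{m}(\mathfrak{m}b) \subseteq \mathfrak{m}A = 0$, that is $\mathfrak{m}^2 b = 0$. Now invoke $\mathfrak{m}^2 = \mathfrak{m}$: this gives $\mathfrak{m}b = \mathfrak{m}^2 b = 0$. Since $b$ was arbitrary, $\mathfrak{m}B = 0$, so $B \in \Sigma$. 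One should also note $\Sigma$ is nonempty (it contains the zero module) and is a full subcategory by definition, completing the verification that it is a Serre subcategory.

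The only ``obstacle'' — really just the one place where a hypothesis is used — is the passage from $\mathfrak{m}^2 b = 0$ to $\mathfrak{m}b = 0$, which is exactly where the non-discreteness of $(K,v)$ (equivalently $\mathfrak{m}^2 = \mathfrak{m}$) enters; without it the extension property genuinely fails (e.g.\ over a discrete valuation ring with uniformizer $\pi$, the sequence $0 \to \Oo/\pi \to \Oo/\pi^2 \to \Oo/\pi \to 0$ has almost-zero ends but a non-almost-zero middle term). I would remark briefly on why $\mathfrak{m}^2 = \mathfrak{m}$ holds for rank-$1$ non-discrete valued fields: given $x \in \mathfrak{m}$, density of $\Gamma_v$ in $\mathbb{R}$ yields $y \in \mathfrak{m}$ with $0 < vy < vx$, whence $x/y \in \mathfrak{m}$ and $x = y \cdot (x/y) \in \mathfrak{m}^2$. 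This makes the proof self-contained and clarifies the role of the perfectoid hypothesis.
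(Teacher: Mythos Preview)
Your proof is correct and is precisely the standard argument the paper alludes to when it says the result ``is easy to show using that $\mathfrak{m}^2=\mathfrak{m}$'' before citing Scholze; the paper itself gives no further details, and your direct verification of closure under subobjects, quotients, and extensions (with $\mathfrak{m}^2=\mathfrak{m}$ used exactly at the extension step) is the intended proof.
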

Fact \ref{thick} allows us to form a new category, where almost zero information is systematically neglected. Namely, we define the Serre quotient (or Gabriel quotient) below
 $$\Oo_K^{a}\lmod=\Oo_K\lmod/\Sigma$$
called the category of \textit{almost} $\Oo_K$-modules. We recall what this means:
\begin{itemize}
\item The objects of $\Oo_K^{a}\lmod$ are the objects of $\Oo_K\lmod$. For clarity, we shall write $M^a$ for the object of $\Oo_K^{a}\lmod$ corresponding to $M$.
\item $\Hom(M^a,N^a)=\varinjlim \Hom(M',N/N')$, where the limit ranges over submodules $M'\subseteq M$ and $N' \subseteq N$ such that $M/M'\in \Sigma$ and $N'\in \Sigma$. 
\end{itemize}
We have a canonical \textit{almostification functor}
$$\Oo_K\lmod\to \Oo_K^{a}\lmod:M \mapsto M^a$$
which sends a morphism $f:M\to N$ to the corresponding element in the direct limit with $M'=M$ and $N'=0$. The almostification functor is exact, essentially surjective and its kernel is $\Sigma$.

The set $\Hom(M^a,N^a)$ is clearly an honest $\Oo_K$-module from the above definition. It also admits a simpler description: 
\begin{fact} [\S 2.2.11 \cite{GR}] \label{abeliantensoralmost}
Given two $\Oo_K$-modules $M,N$, we have 
$$\Hom(M^a,N^a)=\Hom(\mathfrak{m}\otimes M,N)$$
The functor $M^a\mapsto (M^a)_!=\mathfrak{m}\otimes M$ is a left adjoint to the almostification functor. Moreover, if $M$ is an almost $\Oo_K$-module, then $(M_!)^a=M$.
\end{fact}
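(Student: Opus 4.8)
The plan is to reduce the whole statement to the single computation that the multiplication map $\mu\colon\mathfrak{m}\otimes_{\Oo_K}\mathfrak{m}\to\mathfrak{m}$ is an isomorphism, and then read off both the $\Hom$-formula and the adjunction from it.

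\emph{Step 1 (the key computation).} Since $\Oo_K$ is a valuation ring, hence a domain, the submodule $\mathfrak{m}\subseteq\Oo_K$ is torsion-free, hence flat. Tensoring $0\to\mathfrak{m}\to\Oo_K\to k\to 0$ (with $k=\Oo_K/\mathfrak{m}$) with an $\Oo_K$-module $M$ yields, since $\Tor_1^{\Oo_K}(\Oo_K,M)=0$, a natural identification of $\Tor_1^{\Oo_K}(k,M)$ with $\ker(\epsilon_M)$, where $\epsilon_M\colon\mathfrak{m}\otimes_{\Oo_K}M\to M$ sends $a\otimes m$ to $am$. As multiplication by any $a\in\mathfrak{m}=\operatorname{Ann}_{\Oo_K}(k)$ is the zero endomorphism of $k$, it is the zero endomorphism of $\Tor_1^{\Oo_K}(k,M)$ by functoriality; thus $\ker(\epsilon_M)$ is killed by $\mathfrak{m}$, and since $\operatorname{coker}(\epsilon_M)=M/\mathfrak{m}M$ is killed by $\mathfrak{m}$ as well, $\epsilon_M$ has almost zero kernel and cokernel, so $\epsilon_M^a\colon(\mathfrak{m}\otimes_{\Oo_K}M)^a\to M^a$ is an isomorphism. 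Taking $M=\mathfrak{m}$: $\mathfrak{m}^2=\mathfrak{m}$ makes $\mu=\epsilon_{\mathfrak{m}}$ surjective, and $\ker(\mu)=\Tor_1^{\Oo_K}(k,\mathfrak{m})=0$ by flatness, so $\mu$ is an isomorphism. Two further observations: first, $\mathfrak{m}\otimes_{\Oo_K}N=0$ whenever $N$ is almost zero, since writing $a=bc$ with $b,c\in\mathfrak{m}$ (possible as $\mathfrak{m}^2=\mathfrak{m}$) gives $a\otimes n=b\otimes cn=b\otimes 0=0$ on simple tensors; second, flatness of $\mathfrak{m}$ together with the previous point shows, by the usual chase of short exact sequences, that $\mathfrak{m}\otimes_{\Oo_K}f$ is an isomorphism whenever $f$ has almost zero kernel and cokernel. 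Hence $\mathfrak{m}\otimes_{\Oo_K}M$ depends up to canonical isomorphism only on $M^a$, so $M^a\mapsto(M^a)_!:=\mathfrak{m}\otimes_{\Oo_K}M$ is well defined on objects.

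\emph{Step 2 (the $\Hom$-formula).} First rewrite the right-hand side of the quotient-category $\Hom$. In $\varinjlim\Hom_{\Oo_K}(M',N/N')$ the indexing poset of pairs $(M',N')$ with $\mathfrak{m}M\subseteq M'\subseteq M$ and $N'\subseteq N$ almost zero is filtered and has the terminal object $(\mathfrak{m}M,N_\Sigma)$, where $N_\Sigma=\{n\in N:\mathfrak{m}n=0\}$ is the largest almost zero submodule; hence $\Hom_{\Oo_K^a}(M^a,N^a)=\Hom_{\Oo_K}(\mathfrak{m}M,N/N_\Sigma)$. Now define $\beta\colon\Hom_{\Oo_K}(\mathfrak{m}\otimes_{\Oo_K}M,N)\to\Hom_{\Oo_K}(\mathfrak{m}M,N/N_\Sigma)$: a map $f$ sends the almost zero submodule $\ker(\mathfrak{m}\otimes M\twoheadrightarrow\mathfrak{m}M)=\ker(\epsilon_M)$ into the almost zero submodule $N_\Sigma$, hence descends to $\beta(f)\colon\mathfrak{m}M\to N/N_\Sigma$. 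Injectivity: $\beta(f)=0$ means $\operatorname{im}(f)$ is almost zero, and then $f(a\otimes m)=f(c(b\otimes m))=c\cdot f(b\otimes m)\in\mathfrak{m}\cdot\operatorname{im}(f)=0$ for $a=bc$, $b,c\in\mathfrak{m}$, so $f=0$. Surjectivity: given $\bar g\colon\mathfrak{m}M\to N/N_\Sigma$, put $h=\bar g\circ\pi$ with $\pi\colon\mathfrak{m}\otimes M\twoheadrightarrow\mathfrak{m}M$, and set $f:=\epsilon_N\circ(\mathfrak{m}\otimes q_N)^{-1}\circ(\mathfrak{m}\otimes h)\circ(\mu\otimes M)^{-1}\colon\mathfrak{m}\otimes_{\Oo_K}M\to N$, where $q_N\colon N\to N/N_\Sigma$ and $\mathfrak{m}\otimes q_N$ is invertible because $\mathfrak{m}\otimes N_\Sigma=0$, and $\mu\otimes M$ is invertible by Step 1. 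Using naturality of $\epsilon$ (so $q_N\epsilon_N=\epsilon_{N/N_\Sigma}(\mathfrak{m}\otimes q_N)$ and $\epsilon_{N/N_\Sigma}(\mathfrak{m}\otimes h)=h\,\epsilon_{\mathfrak{m}\otimes M}$) together with the identity $\epsilon_{\mathfrak{m}\otimes M}=\mu\otimes M$ (both send $a\otimes b\otimes m$ to $ab\otimes m$), one computes $q_N\circ f=h=\bar g\circ\pi$, i.e.\ $\beta(f)=\bar g$. Thus $\beta$ is a bijection, routinely natural in $M$ and $N$; this is the asserted equality $\Hom_{\Oo_K^a}(M^a,N^a)=\Hom_{\Oo_K}(\mathfrak{m}\otimes_{\Oo_K}M,N)$.

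\emph{Step 3 (adjunction and the ``moreover'').} The natural bijection of Step 2, rewritten via $N^a=(\mathfrak{m}\otimes_{\Oo_K}N)^a$ as $\Hom_{\Oo_K^a}(M^a,N^a)\cong\Hom_{\Oo_K}(\mathfrak{m}\otimes M,\mathfrak{m}\otimes N)$, simultaneously upgrades $M^a\mapsto M_!=\mathfrak{m}\otimes_{\Oo_K}M$ to a functor and exhibits $(-)_!$ as left adjoint to the almostification functor $(-)^a$. Finally, for $M$ an almost module write $M=P^a$; then $(M_!)^a=(\mathfrak{m}\otimes_{\Oo_K}P)^a\cong P^a=M$ canonically via $\epsilon_P^a$ by Step 1 (this isomorphism is the unit of the adjunction). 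I expect the only real obstacle to be organisational rather than conceptual: the surjectivity of $\beta$ in Step 2 leans on the isomorphism $\mu$ and on carefully matching the naturality squares for $\epsilon$, whereas the mathematical heart — flatness of $\mathfrak{m}$ forcing $\mathfrak{m}\otimes_{\Oo_K}\mathfrak{m}\xrightarrow{\ \sim\ }\mathfrak{m}$ — is very short.
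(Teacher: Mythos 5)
The paper states this as a cited Fact (referencing \S 2.2.11 of Gabber--Ramero) and supplies no proof of its own, so there is nothing in the text to compare against line by line. Your blind proof is, as far as I can tell, correct, and it reconstructs what is essentially the Gabber--Ramero argument. The engine is exactly right: $\mathfrak{m}$ is torsion-free, hence flat over the valuation ring $\Oo_K$, which gives both $\ker(\epsilon_M)\cong\Tor_1^{\Oo_K}(k,M)$ killed by $\mathfrak{m}$ and, for $M=\mathfrak{m}$, that $\mu\colon\mathfrak{m}\otimes\mathfrak{m}\to\mathfrak{m}$ is an isomorphism. From there the Gabriel-quotient $\Hom$ collapses correctly to $\Hom(\mathfrak{m}M,N/N_\Sigma)$ because the index poset has a maximum at $(\mathfrak{m}M,N_\Sigma)$ (the two conditions ``$M/M'$ almost zero'' and ``$N'$ almost zero'' pin down the extremal pair exactly as you say), and your map $\beta$ and its inverse go through; the naturality computation $q_N\circ f=h$ using $\epsilon_{\mathfrak{m}\otimes M}=\mu\otimes\mathrm{id}_M$ is correct.

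One point worth flagging explicitly, since it is the one place your argument genuinely uses the hypotheses of this paper rather than the more general Gabber--Ramero ``basic setup'': the flatness of $\mathfrak{m}$ is derived from $\Oo_K$ being a valuation ring (torsion-free implies flat), not assumed. In Gabber--Ramero flatness of $\widetilde{\mathfrak{m}}=\mathfrak{m}\otimes\mathfrak{m}$ is part of the axioms; in the rank-one-valuation-ring setting the paper works in, your derivation is the appropriate verification that those axioms hold. This is a feature, not a gap.

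Two tiny cosmetic remarks. First, ``terminal object'' in Step 2 should really be ``maximum of the directed poset'' (so the colimit is evaluation at that index); the meaning is clear but the wording could mislead. Second, in Step 3 the adjunction bijection you need is $\Hom_{\Oo_K^a}(M^a,N^a)\cong\Hom_{\Oo_K}(\mathfrak{m}\otimes M,N)$, which you already have directly from Step 2; the rewriting via $N^a\cong(\mathfrak{m}\otimes N)^a$ is only needed to describe $(-)_!$ on morphisms, and you should be explicit that well-definedness of $(-)_!$ on objects (from Step 1) plus this $\Hom$-identification is what produces the functor. Neither remark affects correctness.
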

We also define $\text{alHom}(M^a,N^a)=\Hom(M^a,N^a)^a$. 
\begin{fact} [\S 2.2.6 \cite{GR}]
The category $\Oo_K^{a}$ is an abelian tensor category, where kernels, cokernels and tensor products are defined in the unique way compatible with their definition in $\Oo_K\lmod$, e.g., 
$$ M^a\otimes N^a=(M\otimes N)^a$$
for any $\Oo_K$-modules $M$ and $N$.
\end{fact}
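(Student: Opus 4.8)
The plan is to reduce the statement to standard facts about Serre (Gabriel) quotients of abelian categories, feeding in only two ingredients specific to the present situation. The first is that $\Sigma$ is a Serre subcategory, which is Fact~\ref{thick}; the second is that $\Sigma$ is moreover a \emph{tensor ideal}, i.e.\ $M\otimes_{\Oo_K}N\in\Sigma$ whenever $M\in\Sigma$ and $N$ is an arbitrary $\Oo_K$-module, together with the analogous closure under $\operatorname{Tor}_i(-,N)$. Both follow from one observation: if $\mathfrak{m}M=0$ then for each $a\in\mathfrak{m}$ multiplication by $a$ is the zero endomorphism of $M$, hence induces the zero map on $M\otimes_{\Oo_K}N$ and on each $\operatorname{Tor}_i(M,N)$, so these are again killed by $\mathfrak{m}$.

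With this in hand, I would first invoke the general fact that for an abelian category $\mathcal{A}$ and a Serre subcategory $\mathcal{C}$, the quotient $\mathcal{A}/\mathcal{C}$ is abelian, the localization functor $Q\colon\mathcal{A}\to\mathcal{A}/\mathcal{C}$ is exact, and kernels and cokernels in $\mathcal{A}/\mathcal{C}$ are obtained by applying $Q$ to those computed in $\mathcal{A}$. Applied to $\mathcal{A}=\Oo_K\lmod$ and $\mathcal{C}=\Sigma$, this yields the abelian structure and the clause about kernels and cokernels being inherited from $\Oo_K\lmod$, so only the symmetric monoidal structure and its compatibility with the abelian structure remain.

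For the monoidal structure I would use that, up to natural isomorphism, $Q$ is the universal additive functor out of $\Oo_K\lmod$ inverting every morphism whose kernel and cokernel lie in $\Sigma$. Using right exactness of $\otimes_{\Oo_K}$ and the closure properties of $\Sigma$ recorded above, one checks that tensoring such a morphism with any fixed module again produces such a morphism; hence $Q\circ\otimes_{\Oo_K}$ inverts these morphisms in each variable and therefore factors uniquely through $Q\times Q$, giving a biadditive bifunctor $\otimes$ on $\Oo_K^a\lmod$ with $M^a\otimes N^a=(M\otimes_{\Oo_K}N)^a$ and making $Q$ symmetric monoidal. Equivalently, one can set $M^a\otimes N^a:=\bigl((M^a)_!\otimes_{\Oo_K}(N^a)_!\bigr)^a$ via the left adjoint $(-)_!=\mathfrak{m}\otimes_{\Oo_K}-$ of Fact~\ref{abeliantensoralmost}, which is exact since $\mathfrak{m}$ is flat over $\Oo_K$, and then verify this agrees with $(M\otimes_{\Oo_K}N)^a$ using that each of $\mathfrak{m}\otimes_{\Oo_K}\mathfrak{m}\to\mathfrak{m}$ and $\mathfrak{m}\otimes_{\Oo_K}M\to M$ becomes an isomorphism after applying $Q$. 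The associativity, commutativity and unit constraints, and their pentagon and triangle coherences, then transport from $\Oo_K\lmod$ to $\Oo_K^a\lmod$: since $Q$ is full, essentially surjective and symmetric monoidal, the constraints on the quotient are the images under $Q$ of those upstairs, and the coherence diagrams commute because they already do before applying $Q$. Right exactness of $\otimes$ in each variable is then inherited from that of $\otimes_{\Oo_K}$ via exactness of $Q$.

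The step I expect to be the main obstacle is not any formula but the genuine functoriality of $\otimes$ on \emph{morphisms} of $\Oo_K^a\lmod$, i.e.\ making the factorization through $Q\times Q$ rigorous given that morphisms in the quotient are represented by fractions $\Hom(M^a,N^a)=\varinjlim\Hom(M',N/N')$. This is precisely where one must either run the calculus of fractions carefully, or, more cleanly, route everything through the exact fully faithful functor $(-)_!$; everything else amounts to routine symmetric-monoidal bookkeeping.
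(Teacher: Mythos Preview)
The paper does not give its own proof of this statement: it is recorded as a \emph{Fact} with a bare citation to \S 2.2.6 of Gabber--Ramero, so there is nothing in the paper to compare your argument against. Your outline is the standard one and is essentially what one finds in \cite{GR}: the abelian structure comes from the general theory of Serre quotients, and the symmetric monoidal structure descends because $\Sigma$ is a $\otimes$-ideal (which, as you say, is immediate from $\mathfrak{m}(M\otimes N)=(\mathfrak{m}M)\otimes N$). Your two alternative routes to functoriality of $\otimes$ on morphisms---via the calculus of fractions or via the exact left adjoint $(-)_!=\mathfrak{m}\otimes_{\Oo_K}(-)$---are both valid; the second is indeed cleaner and is closer to how Gabber--Ramero organize things. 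One small remark: your parenthetical that $\mathfrak{m}\otimes_{\Oo_K}\mathfrak{m}\to\mathfrak{m}$ becomes an isomorphism after localization is in fact already an isomorphism on the nose, since $\mathfrak{m}$ is flat (torsion-free over a valuation ring) and $\mathfrak{m}^2=\mathfrak{m}$; this is exactly the hypothesis $\widetilde{\mathfrak{m}}:=\mathfrak{m}\otimes_{\Oo_K}\mathfrak{m}\cong\mathfrak{m}$ that Gabber--Ramero impose in their general setup.
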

\begin{notation} \label{almostnotation}
We will sometimes use the following terminology, occasionally used in \cite{GR}, although not explicitly introduced: 
\begin{enumerate}[label=(\roman*)]
\item Given a morphism $M\to N$ of $\Oo_K$-modules, we will say that it is \textit{almost injective} (resp. almost surjective) if the kernel (resp. cokernel) is an almost zero module.

\item  In the above situation, we also say that $M^a\to N^a$ is \textit{injective} (resp. surjective).

\item We say that $M\to N$ is an almost isomorphism if it is both almost injective and almost surjective.
\end{enumerate}

\end{notation}

\subsubsection{Almost algebras and almost elements}
The notion of an almost $\Oo_K$-algebra can now be defined abstractly. We refer the reader to \S 2.2.6 \cite{GR} for the details of the following definition:
\begin{definition} [\S 2.2.6 \cite{GR}, pg. 19 \cite{Scholze}]
\begin{enumerate}[label=(\roman*)]
\item An $\Oo_K^a$-algebra is a commutative unitary monoid $A$ in the abelian tensor category $\Oo_K^a$-mod. This means that there is a multplication map $\mu_A:A\otimes A\to A$ and a unit map $\eta: \Oo_K^a \to A$, satisfying the usual categorical versions of the axioms for a unitary commutative monoid. 
\end{enumerate}
Let $A$ be an $\Oo_K^a$-algebra. 
\begin{enumerate}[label=(\roman*)]
  \setcounter{enumi}{1}
\item An $A$-module is an $\Oo_K^a$-module $M$ together with a scalar multiplication map $A\otimes M \to M$, again defined in a categorical fashion. 
\item An $A$-algebra is a commutative unitary monoid $B$ in the abelian tensor category $A\lmod$, or equivalently an $\Oo_K^a$-algebra together with an algebra morphism $A\to B$.
\end{enumerate}
\end{definition}
We will shortly see that almost algebras come from almostifications of honest algebras. First, we need the functor of almost elements:  
\begin{fact} [Proposition 2.2.14 \cite{GR}]
There is a right adjoint to the almostification functor
$$ \Oo_K^a\lmod \to \Oo_K\lmod:M\mapsto M_* $$
given by the functor of almost elements 
$$M_*=\Hom(\Oo_K^a,M^a)$$
Moreover, we have that $(M_*)^a=M$. If $M=N^a$, then $M_*=\Hom(\mathfrak{m},N)$.
\end{fact}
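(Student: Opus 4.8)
Everything reduces to Fact~\ref{abeliantensoralmost} (the existence and properties of the left adjoint $(-)_!$, together with the identity $\Hom(M^a,N^a)=\Hom_{\Oo_K}(\mathfrak{m}\otimes M,N)$) and ordinary tensor--hom adjunction; there is no need to invoke the general theory of Serre quotients. The functor $(-)_*$ is defined by $M_*=\Hom(\Oo_K^a,M)$ with $\Hom$ taken in the almost category, so it is manifestly functorial in the almost module $M$, and its underlying set is an honest $\Oo_K$-module by the discussion preceding Fact~\ref{abeliantensoralmost}. Choosing any representative $N$ with $M=N^a$ and applying Fact~\ref{abeliantensoralmost} with first argument $\Oo_K$ gives $M_*=\Hom(\Oo_K^a,N^a)=\Hom_{\Oo_K}(\mathfrak{m}\otimes\Oo_K,N)=\Hom_{\Oo_K}(\mathfrak{m},N)$, which is the last displayed identity; independence of the choice of $N$ is automatic from the intrinsic definition.

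\textbf{The adjunction.} For an honest $\Oo_K$-module $P$ and $M=N^a$ I would exhibit the chain of natural isomorphisms
\[
\Hom_{\Oo_K}(P,M_*)=\Hom_{\Oo_K}\bigl(P,\Hom_{\Oo_K}(\mathfrak{m},N)\bigr)\cong\Hom_{\Oo_K}(P\otimes_{\Oo_K}\mathfrak{m},N)\cong\Hom(P^a,N^a)=\Hom(P^a,M),
\]
where the middle step is tensor--hom adjunction and the third step is Fact~\ref{abeliantensoralmost} again. Naturality of the composite in both $P$ and $M$ is routine, and this exhibits $(-)_*$ as a right adjoint to the almostification functor (uniqueness of the right adjoint is then automatic).

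\textbf{The counit identity $(M_*)^a=M$.} Write $M=N^a$ and introduce the natural multiplication map $\lambda_N\colon N\to\Hom_{\Oo_K}(\mathfrak{m},N)$, $n\mapsto(m\mapsto mn)$. Its kernel is $\{n\in N:\mathfrak{m}n=0\}$, an almost zero module, so $\lambda_N$ is almost injective. For almost surjectivity one uses $\mathfrak{m}^2=\mathfrak{m}$: given $m_0\in\mathfrak{m}$ and $\phi\in\Hom_{\Oo_K}(\mathfrak{m},N)$, writing $m_0=\sum a_ib_i$ with $a_i,b_i\in\mathfrak{m}$ and pushing the scalars through $\phi$ yields $m_0\phi=\lambda_N(\phi(m_0))$, so $\mathfrak{m}\cdot\Hom_{\Oo_K}(\mathfrak{m},N)\subseteq\operatorname{im}\lambda_N$ and the cokernel of $\lambda_N$ is almost zero. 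Hence $\lambda_N^a$ is an isomorphism $N^a\to\Hom_{\Oo_K}(\mathfrak{m},N)^a=(M_*)^a$ in $\Oo_K^a\lmod$, and one checks it is inverse to the counit, giving $(M_*)^a=M$. The only point that is not purely formal is this last almost-surjectivity computation, and it is exactly there that idempotence of $\mathfrak{m}$ — equivalently, non-discreteness of $(K,v)$, which is our standing hypothesis — is used; this is the (mild) main obstacle, everything else being general adjunction nonsense.
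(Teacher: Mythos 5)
The paper itself gives no proof here; it simply cites Proposition 2.2.14 of Gabber--Ramero, so there is nothing to compare your argument against other than the standard argument. Your reconstruction is correct and efficiently reduces everything to Fact~\ref{abeliantensoralmost} plus tensor--hom adjunction, which is essentially how one proves this in \cite{GR}.

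One small but worth-fixing misattribution: the almost-surjectivity of $\lambda_N$ does \emph{not} use $\mathfrak{m}^2=\mathfrak{m}$, and the decomposition $m_0=\sum a_i b_i$ is a detour. For any $m_0\in\mathfrak{m}$ and any $\Oo_K$-linear $\phi\colon\mathfrak{m}\to N$ one has directly, for all $m\in\mathfrak{m}$,
\[
(m_0\phi)(m)=m_0\,\phi(m)=\phi(m_0 m)=m\,\phi(m_0)=\lambda_N\bigl(\phi(m_0)\bigr)(m),
\]
so $m_0\phi=\lambda_N(\phi(m_0))$ by $\Oo_K$-linearity and commutativity alone, giving $\mathfrak{m}\cdot\Hom_{\Oo_K}(\mathfrak{m},N)\subseteq\operatorname{im}\lambda_N$ without any appeal to idempotence. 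The hypothesis $\mathfrak{m}^2=\mathfrak{m}$ is of course indispensable to the whole setup --- it is what makes $\Sigma$ a Serre subcategory (Fact~\ref{thick}), hence what makes $\Oo_K^a\lmod$ exist, and it is already baked into Fact~\ref{abeliantensoralmost} which your chain of isomorphisms leans on --- but it is not doing any work at the particular step you flag as ``the (mild) main obstacle.'' Apart from that, the adjunction chain and the identification $M_*=\Hom_{\Oo_K}(\mathfrak{m},N)$ are exactly right, and the remaining naturality and well-definedness checks really are the routine ones you label them as.
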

\begin{rem} \label{almostelrem}
\begin{enumerate}[label=(\roman*)] 

\item The adjoints $({-})_*$ and $({-})_!$ both yield full embeddings of $\Oo_K^a\lmod$ into $\Oo_K\lmod$ since they are right inverses to the almostification functor $({-})^a$. 

\item The functor $({-})_*$ is monoidal, i.e., it preserves unit maps and there is a canonical map 
$$F: M_*\otimes_{\Oo_K} N_* \to (M\otimes_{\Oo_K^a}  N)_*$$ 
In particular, if $A$ is an $\Oo_K^a$-algebra, then $A_*$ is naturally an $\Oo_K$-algebra and $(A_*)^a=A$. 
We stress that $F$ is usually not an isomorphism, i.e., $({-})_*$ \textit{does not} preserve tensor products in general. For instance, if $K=\Q_p(p^{1/p^{\infty}})$ and $K'=K(p^{1/2})$, then $(\Oo_{K'}^a)_*=\Oo_K$ and one can check that $p^{1/2}\otimes p^{-1/2}\in (\Oo_{K'}^a \otimes_{\Oo_K^a} \Oo_{K'}^a)_*$ but $p^{1/2}\otimes p^{-1/2}\notin \Oo_{K'}\otimes_{\Oo_K} \Oo_{K'}$. 

\item On the other hand, $({-})_!$ does preserve tensor products but it does not preserve unit maps since $(\Oo_K^a)_!=\mathfrak{m}\neq \Oo_K$. In particular, $({-})_!$ does not preserve monoidal structures. 
\end{enumerate}
\end{rem}
In \S \ref{embeddingsection}, we define yet another embedding of $\Oo_K^a\lmod$ into a concrete category of modules. 
In contrast with $({-})_*$, this also preserves tensor products, at least under some finite generation assumption on the almost modules being tensored.
\subsubsection{Almost commutative algebra}
We are now ready to define almost analogues of classical ring-theoretic notions.
\begin{definition} \label{definitionalmostca}
Let $A$ be an $\Oo_K^a$-algebra.
\begin{enumerate}[label=(\roman*)]

\item An $A$-module is flat if the functor $X\mapsto M\otimes_A X$ on $A$-modules is exact.

\item  An $A$-module is almost projective if the functor $X\mapsto \text{al}\Hom_A(M,X)$ on $A$-modules is exact. 

\item If $R$ is an $\Oo_K$-algebra and $N$ is an $R$-module, then $M=N^a$ is said to be almost finitely generated (resp. almost finitely presented) $R^a$-module if and only if for all $\varepsilon\in \mathfrak{m}$, there is some finitely generated (resp. finitely presented) $R$-module $N_{\varepsilon}$ with a map $f_{\varepsilon}:N_{\varepsilon}\to N$ such that the kernel and cokernel of $f_{\varepsilon}$ are killed by $\varepsilon$. We further say that $M$ is uniformly almost finitely generated if there is some integer $n$ such that $N_{\varepsilon}$ above can be chosen to be generated by $n$ elements.
\end{enumerate}
\end{definition}

\begin{rem} \label{almostringrem}
Let $A$ be an $\Oo_K^a$-algebra and $M$ be an $A$-module.
\begin{enumerate}[label=(\roman*)]
\item The functor $X\mapsto M\otimes_A X$ is always right exact and $X\mapsto \text{al}\Hom_A(M,X)$ is always left exact, in accordance with the analogous facts in usual commutative algebra.

\item If $M$ is finitely generated, then $M^a$ is clearly uniformly almost finitely generated. On the other hand, the module $\mathfrak{m}^a$ is uniformly almost finitely generated but $\mathfrak{m}$ is \textit{not} almost isomorphic to a finitely generated module. 

\item The usual categorical notion of projectivity in $\Oo_K^a$-mod is ill-behaved because even $\Oo_K^a$ is not a projective module over itself (see Remark 4.8 \cite{Scholze}). 
It is for this reason that the term \quotes{almost projective} is being used.

\end{enumerate}

\end{rem}
For lack of reference, we also record the following observation:
\bl \label{almostringlem}
Let $A$ be an $\Oo_K$-algebra and $M$ be an $A$-module. Then $M^a$ is flat as an $A^a$-module if and only if for any finitely generated $A$-modules $N_1,N_2$ and any almost injective map $N_1\to N_2 $, the map $M\otimes N_1\to M\otimes N_2$ is almost injective. 
\el 
\begin{proof}
The forward direction is clear, so we only prove the converse. Let $M_1^a\to M_2^a$ be an arbitrary injective map. Replacing $M_1$ with $(M_1^a)_*$ and $M_2$ with $(M_2^a)_*$, we can assume that this comes from an injective map $\psi: M_1\to M_2$. We may now write $\psi=\varinjlim \psi_j$, where $\psi_j:M_{1,j}\to M_{2,j}$ is injective and each $M_{i,j}$ is finitely generated. By assumption, we get that $M\otimes M_{1,j}\to M\otimes M_{2,j}$ is almost injective. Passing to direct limits, we get that $\varinjlim(M\otimes M_{1,j})\to \varinjlim (M\otimes M_{2,j})$ is almost injective. To see this, recall that direct limits preserve exact sequences of modules (see Tag 00DB \cite{sp}) and observe that almost zero modules are closed under direct limits. Finally, recall that tensor products commute with direct limits (cf. Tag 00DD \cite{sp}) and therefore $\varinjlim (M\otimes M_{i,j})=M\otimes M_i$ for $i=1,2$. It follows that $M^a\otimes M_1^a\to M^a\otimes M_2^a$ is injective.
\end{proof}
Recall that projective modules are always flat and the converse is true for finitely presented modules. This has an analogue in the almost setting:
\begin{fact} [\cite{GR}] \label{flatvsproj}
Let $A$ be an $\Oo_K^a$-algebra and $M$ be an $A$-module. Then: 
\begin{enumerate}[label=(\roman*)]
\item If $M$ is almost projective as an $A$-module, then $M$ is flat as an $A$-module.
\item $M$ is flat and (uniformly) almost finitely presented if and only if it is almost projective and (uniformly) almost finitely generated.
\end{enumerate}
\end{fact}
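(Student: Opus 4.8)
The plan is to reduce to honest modules via $(-)_*$ and then transport the classical facts ``projective implies flat'' and ``finitely presented flat implies projective'' through the almost formalism, carrying along an auxiliary parameter $\varepsilon\in\mathfrak{m}$. For (i), I would first extract an \emph{$\varepsilon$-splitting} from the definition: writing $A=(A_*)^a$ and $M=(M_*)^a$, choose a free $A$-module $F=A^{(\Lambda)}$ together with a surjection $\pi\colon F\to M$ in $\Oo_K^a\lmod$ (possible since every almost module is a quotient of a free one). Almost projectivity of $M$ means $\text{al}\Hom_A(M,-)$ is exact, so it sends the epimorphism $\pi$ to an epimorphism; hence the cokernel of $\text{al}\Hom_A(M,\pi)$ is almost zero, which produces for every $\varepsilon\in\mathfrak{m}$ a morphism $\sigma_\varepsilon\colon M\to F$ with $\pi\circ\sigma_\varepsilon=\varepsilon\cdot\mathrm{id}_M$. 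By Lemma \ref{almostringlem} it suffices to test exactness of $M^a\otimes(-)$ on an injection $N_1^a\to N_2^a$ between almostifications of finitely generated modules; if $x\in M\otimes_A N_1$ dies in $M\otimes_A N_2$, then $(\sigma_\varepsilon\otimes 1)(x)$ dies in the free (hence flat) module $F\otimes_A N_2$, so it already vanishes in $F\otimes_A N_1$, and applying $\pi\otimes 1$ gives $\varepsilon x=0$. Since $\varepsilon\in\mathfrak{m}$ was arbitrary, $\mathfrak{m}x=0$, i.e., $x$ is almost zero; thus $M\otimes_A(-)$ preserves injections, and being right exact, it is exact.

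For (ii), the direction ``almost projective and almost finitely generated $\Rightarrow$ flat and almost finitely presented'' has its flatness part handled by (i); for almost finite presentation I would, given $\varepsilon\in\mathfrak{m}$, pick a finite free $A^n\xrightarrow{g}M$ whose cokernel is killed by some $\delta\in\mathfrak{m}$ (almost finite generation), and then use the $\varepsilon$-splitting of $M$ to show $\ker(g)$ is itself almost finitely generated, so that $M$ sits between finite frees modulo a fixed power of $\mathfrak{m}$; the bound on $n$ is uniform as soon as $M$ is uniformly almost finitely generated. Conversely, for ``flat and almost finitely presented $\Rightarrow$ almost projective and almost finitely generated'', the ``almost finitely generated'' half is immediate since finitely presented modules are finitely generated, and for almost projectivity I would adapt the classical argument that finitely presented flat modules are projective: fix $\varepsilon$, choose a finitely presented $N_\varepsilon$ with $f_\varepsilon\colon N_\varepsilon\to M$ having kernel and cokernel killed by $\varepsilon$ and a presentation $A^m\xrightarrow{\phi}A^n\xrightarrow{p}N_\varepsilon\to 0$, and invoke the equational (Lazard) criterion for flatness of $M$ to factor the relations among the images in $M$ of the standard basis of $A^n$ through $\phi$ up to $\varepsilon$-torsion; this yields a map $s\colon M\to A^n$ displaying $M$ as an ``$\varepsilon^k$-retract'' of the finite free $A^n$ for each $\varepsilon$, i.e., (by the $\varepsilon$-splitting criterion for almost projectivity, the converse to the computation in (i)) an almost projective module, and uniformly so because the $n$'s stay bounded.

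The step I expect to be the genuine obstacle is this last one, the converse in (ii): turning the classical proof that finitely presented flat modules are projective into an ``almost'' statement, i.e., controlling how the various choices of $\varepsilon$ compound when one combines the equational criterion for flatness with a presentation that is itself only an $\varepsilon$-approximation of $M$. Everything else is either formal from the definitions of almost finite generation and presentation, or follows by applying the corresponding non-almost result to the honest modules $M_*$ and $A_*$ and then almostifying.
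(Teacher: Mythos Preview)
The paper does not prove this statement at all: it is recorded as a \emph{Fact} with a bare citation to Gabber--Ramero (Remark 2.4.12(i) and Proposition 2.4.18 in \cite{GR}). So there is nothing to compare your argument to within the paper itself; you are supplying a proof where the paper deliberately outsources one.

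That said, your proof of (i) is correct and is essentially the argument that underlies the cited reference: extract the $\varepsilon$-splittings $\sigma_\varepsilon$ from exactness of $\text{al}\Hom_A(M,-)$, then use them to show the kernel of $M\otimes N_1\to M\otimes N_2$ is $\mathfrak{m}$-torsion. Your invocation of Lemma~\ref{almostringlem} to reduce to finitely generated $N_i$ is a nice shortcut. For (ii), your outline is on the right track---the $\varepsilon$-splitting characterization of almost projectivity is exactly the pivot Gabber--Ramero use---and you have correctly identified the delicate point: in the converse, one must control how the approximation errors from the finitely presented $N_\varepsilon$ interact with the equational flatness criterion, and this bookkeeping (which powers of $\varepsilon$ survive) is where the work lies. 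Your sketch would need those constants made explicit to be a proof, but the strategy is sound and matches what the reference does.
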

\begin{proof}
For part (i), see Remark 2.4.12(i) \cite{GR} and for part (ii) see Proposition 2.4.18 \cite{GR}.
\end{proof}
We use the following terminology, which is used in \cite{Scholze} but not in \cite{GR}.
\begin{definition}
Let $A$ be an $\Oo_K^a$-algebra and $M$ be an $A$-module. We say that $M$ is \textit{(uniformly) finite projective} if it is almost projective and (uniformly) almost finitely generated as an $A$-module.
\end{definition}

We finally arrive at the central concept of an \'etale extension (cf. \S 3.1 \cite{GR}):
\begin{definition} 
Let $A$ be an $\Oo_K^a$-algebra and $B$ be an $A$-algebra.
\begin{enumerate}[label=(\roman*)]
\item  We say that $B$ is unramified over $A$ if $B$ is almost projective as a $B\otimes_A B$-module under the multiplication map $\mu:B\otimes_A B\to B$.
\item We say that $B$ is \'etale over $A$ if it is flat and unramified.
\item We say that $B$ is finite \'etale if it is \'etale and $B$ is an almost finitely presented $A$-module.
\end{enumerate}
\end{definition}
The usual diagonal idempotent criterion for unramified morphisms has an analogue in the almost context. In \cite{Scholze}, this criterion is used as the definition of an unramified morphism. 
\begin{fact} [Proposition 3.1.4 \cite{GR}] \label{almostetalerem} 
Let $A$ be an $\Oo_K^a$-algebra and $B$ be an $A$-algebra. Write $\mu:B\otimes_A B\to B$ for the multiplication map. Then the following are equivalent:
\begin{enumerate}[label=(\roman*)]
\item $B$ is unramified over $A$.
\item There exists an almost element $e\in (B\otimes_A B)_*$ such that $e^2=e$, $\mu(e)=1$ and $x\cdot e=0$ for all $x\in \text{ker}(\mu)_*$.
\end{enumerate}
\end{fact}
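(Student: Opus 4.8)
The plan is to transcribe into the almost setting the classical diagonal-idempotent characterisation of unramified morphisms, using exactness of $\text{al}\Hom$ in place of $\Hom$ and the functor of almost elements $(-)_*$ to give literal meaning to the phrase ``there is a diagonal idempotent''. Write $C=B\otimes_A B$, let $\mu\colon C\to B$ be the multiplication map and $I=\ker(\mu)$. The elementary observation underlying everything is that, although $\mu$ is not split as a $C$-module map, it is an honest surjection of $C$-modules: the map $b\mapsto b\otimes 1$ is an $A$-algebra section of $\mu$, so $0\to I\to C\xrightarrow{\mu} B\to 0$ is an exact sequence of $C$-modules.

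For (i)$\Rightarrow$(ii): by definition, $B$ is unramified over $A$ precisely when $B$ is almost projective as a $C$-module, i.e.\ when $\text{al}\Hom_C(B,-)$ is exact (Definition \ref{definitionalmostca}). Applying this functor to the exact sequence above, I would deduce that $\Hom_C(B,C)^a\to\Hom_C(B,B)^a$ is surjective, so the class of $\id_B$ lifts to an element $s\in\text{al}\Hom_C(B,C)$; unwinding $(-)^a$, this is a compatible family $(\varepsilon\mapsto s_\varepsilon)_{\varepsilon\in\mathfrak m}$ of honest $C$-linear maps $s_\varepsilon\colon B\to C$ with $\mu\circ s_\varepsilon=\varepsilon\cdot\id_B$, which we may rescale using $\mathfrak m^2=\mathfrak m$. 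Putting $e:=s(1)\in(B\otimes_A B)_*$, one reads off $\mu(e)=1$ directly from $\mu\circ s=\id_B$; the identity $e^2=e$ follows from $C$-linearity of $s$ and $\mu(e)=1$ via $e^2=s(1)\cdot s(1)=s(\mu(s(1)))=s(\mu(e))=s(1)=e$; and $x\cdot e=x\cdot s(1)=s(\mu(x))=0$ for every $x\in\ker(\mu)_*$, since $\mu(x)=0$.

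For (ii)$\Rightarrow$(i): given such an $e$, I would define an almost $C$-linear map $s\colon B\to C$ sending $b$ to $\tilde b\cdot e$, where $\tilde b\in C$ is any $\mu$-preimage of $b$; this is well defined as a morphism of almost modules precisely because two preimages of $b$ differ by an element of $I_*=\ker(\mu)_*$, which annihilates $e$ by hypothesis. Then $\mu\circ s=\id_B$ because $\mu(e)=1$, so $s$ is an almost splitting of $\mu$ and $B$ is an almost direct summand of the free $C$-module $C$. Hence $\text{al}\Hom_C(B,-)$ is a retract of $\text{al}\Hom_C(C,-)$, which is the identity functor up to isomorphism and in particular exact; as a retract of an exact functor is exact, $B$ is almost projective over $C=B\otimes_A B$, i.e.\ $B$ is unramified over $A$.

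The homological steps are routine; the genuinely delicate point, which I expect to cost the most care, is keeping track of almost-ness throughout: one must consistently read $e$, $e^2$, $\mu(e)$ and $x\cdot e$ as operations on the module of almost elements $(B\otimes_A B)_*$, and one must repeatedly absorb the auxiliary factors $\varepsilon\in\mathfrak m$ that are forced to appear when passing from exactness of $\text{al}\Hom$ to an honest lift, using $\mathfrak m^2=\mathfrak m$ (valid since $(K,v)$ is of rank $1$ and non-discrete). Note that no monoidal behaviour of $(-)_*$ is needed here: it is applied to $C$ and to $B$ separately, never to a tensor product of almost modules, so the fact that $(-)_*$ is only lax monoidal (Remark \ref{almostelrem}) causes no trouble.
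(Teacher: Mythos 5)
The paper does not prove this statement; it is cited verbatim as Proposition~3.1.4 of Gabber--Ramero \cite{GR}, and none of the surrounding text reproduces the argument. So there is nothing to compare your proposal against within the paper, and I will assess it on its own merits.

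Your overall strategy is the right one and is, as far as I can tell, the strategy used in \cite{GR}: transplant the classical diagonal-idempotent characterisation of unramified morphisms, replacing exactness of $\Hom$ by exactness of $\text{al}\Hom$ and extracting the idempotent from a splitting of $\mu\colon C\to B$. The direction (ii)$\Rightarrow$(i) is essentially sound: from $e$ one builds an almost-linear retraction of $\mu$, exhibits $B$ as a retract of $C$ in $C\lmod$, and invokes the fact that $\text{al}\Hom_C(C,-)\cong\text{id}$ is exact and retracts of exact functors are exact. (Your aside that lax monoidality of $(-)_*$ is ``not needed'' is a bit too glib, since the ring structure on $(B\otimes_A B)_*$ used to make sense of $e^2$ and $x\cdot e$ is exactly the lax monoidal structure of Remark~\ref{almostelrem}(ii); but this is only a matter of phrasing.)

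The genuine gap is in (i)$\Rightarrow$(ii), at the step ``the class of $\id_B$ lifts to an element $s\in\text{al}\Hom_C(B,C)$.'' What almost projectivity gives you is that
\[
\text{al}\Hom_C(B,C)\longrightarrow \text{al}\Hom_C(B,B)
\]
is an epimorphism of almost modules. This only means that the underlying $\Oo_K$-linear map $\Hom_C(B,C)\to\Hom_C(B,B)$ has almost-zero cokernel, which gives you, for each $\varepsilon\in\mathfrak m$, a $C$-linear $s_\varepsilon$ with $\mu\circ s_\varepsilon=\varepsilon\cdot\id_B$; it does \emph{not} give you an almost element $s\in\text{al}\Hom_C(B,C)_*$ lifting $\id_B\in\text{al}\Hom_C(B,B)_*$. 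Indeed $(-)_*$ is a right adjoint (Remark~\ref{almostelrem}), hence only left exact, so it need not take this epimorphism to a surjection; equivalently, $\mathfrak m$ is flat but not projective over $\Oo_K$ (being a non-finitely-generated ideal of a valuation domain), so the evident map $\mathfrak m\to\Hom_C(B,B)$, $\varepsilon\mapsto\varepsilon\id_B$, need not factor through $\Hom_C(B,C)$. Your parenthetical ``which we may rescale using $\mathfrak m^2=\mathfrak m$'' does not resolve this: with $\mu\circ s_\varepsilon=\varepsilon\cdot\id_B$ one gets $e_\varepsilon:=s_\varepsilon(1)$ with $\mu(e_\varepsilon)=\varepsilon$ and $e_\varepsilon^2=\varepsilon e_\varepsilon$, and one cannot ``divide by $\varepsilon$'' to turn these approximate idempotents into the honest idempotent $e$ with $\mu(e)=1$, nor do the $e_\varepsilon$ automatically satisfy the compatibility $\delta e_\varepsilon=\varepsilon e_\delta$ that would let one assemble them into a single almost element. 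That gluing is precisely the nontrivial content of Gabber--Ramero's proof and is what your argument is missing.
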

\begin{example} [Example 4.3.2 \cite{BB}] \label{quadex}
Let $K$ be the $p$-adic completion of $\Q_p(p^{1/p^{\infty}})$ and $K'=K(p^{1/2})$. We claim that $\Oo_{K'}^a$ is an \'etale $\Oo_K^a$-algebra. To see that $\Oo_{K'}^a/\Oo_K^a$ is unramified, we find an element $e$ as in Fact \ref{almostetalerem}(ii). Let
$$e=\frac{1}{2} (p^{-1/2}\otimes p^{1/2}+1)\in K'\otimes_K K' $$
We claim that $e\in (\Oo_{K'}^a\otimes_{\Oo_K^a} \Oo_{K'}^a)_* $. For each $n\in \N$, we need to prove that $p^{1/p^n}\cdot e \in \Oo_{K'}\otimes_{\Oo_K} \Oo_{K'}$. It is clear that $p\cdot e \in \Oo_{K'}\otimes_{\Oo_K} \Oo_{K'}$. For larger $n$, observe that $p^{-1/2}\otimes p^{1/2}=p^{-1/2p^n}\otimes p^{1/2p^n}$ in $K'\otimes_K K'$, so we can write 
$$e=\frac{1}{2} (p^{-1/2p^n}\otimes p^{1/2p^n}+1) $$
From this presentation, it is clear that $p^{1/p^n}\cdot e \in \Oo_{K'}\otimes_{\Oo_K} \Oo_{K'}$. We also check that $e^2=e$, $\mu(e)=1$ and $x\cdot e=0$ for all $x\in \text{ker}(\mu)_*$. Regarding flatness, we note in general that for any $\Oo_K$-algebra $A$ the almostification functor 
$$A\lmod\to A^a\lmod:M\mapsto M^a$$ 
sends flat $A$-modules to flat $A^a$-modules (see \S 2.4.8 \cite{GR}).
\end{example}

\subsection{Non-standard version} \label{nonstdvers}
We fix some notation which will be used throughout this section:
\begin{itemize}

\item Let $(K,v)$ be a non-discrete valued field of rank $1$ and $\varpi \in \mathfrak{m}\backslash \{0\}$. 
\item Let $U$ be a non-principal ultrafilter on $\N$ and $(K_U,v_U)$ be the corresponding ultrapower. 
\item Let $w$ be the coarsest coarsening of $v_U$ with $w\varpi>0$.
\item   We write $S$ for the multiplicative set of elements of infinitesimal valuation, i.e.,
$$S=\{x\in \Oo_{v_U}:n\cdot vx<v\varpi \mbox{ for all }n\in \N \}$$ 
\item Given a sequence $(M_i)_{i\in \omega}$ of $\Oo_{K}$-modules, we write $S^{-1}(\ulim M_i)$ for the localization of the ultraproduct $\ulim M_i$ at $S$. 
\end{itemize}
\end{definition}
\subsubsection{Almost sequences of modules} \label{almostseq}
It will be convenient to introduce a category larger than the category of almost modules, which also encodes information about sequences of modules. 
\begin{definition}
A sequence $(M_i)_{i\in \omega }$ of $\Oo_K$-modules is called almost zero if for every $\varepsilon \in \mathfrak{m}$, we have that
$$\{i\in \omega: \varepsilon \cdot M_i=0\}\in U$$
We write $\Sigma^+$ for the full subcategory of almost zero objects in $(\Oo_K\lmod)^\omega$.
\end{definition}
This is clearly compatible with the almost mathematics definition via the diagonal functor $\Delta: \Oo_K\lmod \to (\Oo_K\lmod)^\omega:M\mapsto (M,M,...)$:
\bl \label{almostzerolem}
Let $M$ be an $\Oo_K$-module. Then $M\in \Sigma$ if and only if $\Delta M \in \Sigma^+$. 
\el 
Note that almost zero sequences do not always arise as sequences of almost zero modules:
\begin{example}
Let $K$ be a perfectoid field and $\varpi$ be a pseudo-uniformizer admitting a system of $p$-power roots. For each $i\in \omega$, let $M_i= \Oo_K/(\varpi^{1/p^i})$. One readily checks that $(M_i)_{i\in \omega} \in \Sigma^+$ but $M_i \notin \Sigma$ for all $i\in \omega$. 
\end{example}


\bp \label{embeddingcat1}
Let $A$ be an $\Oo_K$-algebra. Consider the assignment
$$S^{-1}(\ulim ({-})): (A\lmod)^\omega \to S^{-1}A_U\lmod:(M_i)_{i\in \omega} \mapsto S^{-1}(\ulim M_i)$$
which acts on morphisms in the obvious way.  Then:
\begin{enumerate}[label=(\roman*)]
\item $S^{-1}(\ulim ({-}))$ is an exact functor. 
\item We have $\text{Ker}(S^{-1}(\ulim ({-})))=\Sigma^+$. In particular, $\Sigma^+$ is a Serre subcategory of $(\Oo_K\lmod)^\omega$.
\item $S^{-1}(\ulim ({-}))$ is monoidal, i.e., there exists a natural map  
$$S^{-1}(\ulim M_i)\otimes_{S^{-1}A_U} S^{-1}(\ulim N_i) \to S^{-1} (\ulim M_i\otimes_A N_i)$$
for any modules $M_i$ and $N_i$. Moreover, if $(M_i)_{i\in \omega}$ and $(N_i)_{i\in \omega}$ are uniformly finitely generated, then the above map is an isomorphism.
\item $\Sigma^+$ is an ideal of $((\Oo_K\lmod)^\omega,\otimes)$.
\end{enumerate}
\ep 
\begin{proof}
(i) By Lemma \ref{ultramodules}, we have that the ultraproduct functor 
$$\mathcal{U}: (A\lmod)^\omega \to A_U\lmod:(M_i)_{i\in \omega} \mapsto \ulim M$$
is exact. Localization at the multiplicative set $S$ gives rise to a functor 
$$ A_U\lmod\to S^{-1}A_U\lmod :N \mapsto S^{-1}N_U$$
which is also exact by Tag 00CS \cite{sp}. Now $S^{-1}(\ulim ({-}))$ is visibly the composition of the two functors above and is therefore exact. \\
(ii) We need to prove that a sequence $(M_i)_{i\in \omega}$ is almost zero if and only if $S^{-1}(\ulim M_i)=0$. Assume that $S^{-1}(\ulim M_i)=0$ and suppose that $\varepsilon \in \mathfrak{m}$ is such that $\{i\in \omega: \varepsilon \cdot M_i=0\} \notin U$. For almost all $i$, we can then find some $m_i\in M_i$ such that $\varepsilon \cdot m_i\neq 0$. Therefore, we get $\varepsilon \cdot m\neq 0$, where $m=\ulim m_i$. Since $S^{-1}(\ulim M_i)=0$, there exists $s\in S$ such that $s\cdot m=0$. Since $s$ divides $\varepsilon$ in $\Oo_{v_U}$, this gives $\varepsilon\cdot m=0$, which is a contradiction. Conversely, suppose that for every $\varepsilon \in \mathfrak{m}$, we have that $\{i\in \omega: \varepsilon \cdot M_i=0\}\in U$. Applying that non-principal ultraproducts are $\aleph_1$-saturated, we can now find $s\in S$ such that $s\cdot \ulim M_i=0$.\footnote{To see this directly, choose elements $\varepsilon_i \in \mathfrak{m}$ generating $\mathfrak{m}$ with $v\varepsilon_{i+1}<v\varepsilon_i$. Let $k(i)$ be the maximum $j\leq i$ such that $\varepsilon_{j} \cdot M_i=0$ if such a $j$ exists and otherwise set $k(i)=1$. Now take $s=\ulim \varepsilon_{k(i)}$ and use \L o\'s to show that $s\in S$ and $s\cdot M_U=0$.}
In particular, we get that $S^{-1}(\ulim M_i)=0$. We conclude that $\text{Ker}(S^{-1}(\ulim ({-})))=\Sigma^+$. 
By part (i),  $\Sigma^+$ is a Serre subcategory, being the kernel of an exact functor.  \\
(iii) By Lemma \ref{ultramodules}, we have a natural map
$$F:\ulim M_i\otimes_{A_U} \ulim N_i \to \ulim (M_i\otimes_A N_i)$$
By Lemma \ref{ultramodules}(iii), this is an isomorphism if $(M_i)_{i\in \omega}$ and $(N_i)_{i\in \omega}$ are uniformly finitely generated. Since localization commutes with tensor products, we have that
$$S^{-1}(\ulim M_i\otimes \ulim N_i)=S^{-1}(\ulim M_i)\otimes S^{-1}(\ulim N_i)$$ 
and the desired map is simply $S^{-1}F$.\\ 
For part (iv), note that if $\varepsilon\cdot M=0$, then $\varepsilon\cdot( M\otimes N)=0$, for any module $N$.
\end{proof}

\begin{definition}
Let $A$ be an $\Oo_K$-algebra. The category of almost sequences of $A$-modules is the Serre quotient 
$$\Delta A^a\lmod= (A\lmod)^\omega/\Sigma^+ $$
We write 
$$(A\lmod)^\omega\to \Delta A^a: (M_i)_{i\in \omega}\mapsto (M_i)_i^a $$
for the almostification functor. 
\end{definition}

\bc [cf. Corollary \ref{abeliantensoralmost}] \label{lumodisnice}
The category $\Delta A^a\lmod$ is an abelian tensor category, where we define kernels, cokernels and tensor products in the unique way compatible with their definition in $(A-mod)^{\omega}$. For instance, we have
$$(M_i)_i^a\otimes (N_i)_i^a=(M_i\otimes N_i)_i^a$$
\ec 
\begin{proof}
Clear from Proposition \ref{embeddingcat1}.
\end{proof}
\begin{notation} 
In parallel with \ref{almostnotation}, we will use the following terminology:
\begin{enumerate}[label=(\roman*)]
\item We will say that a morphism $(M_i)_{i\in \omega}\to (N_i)_{i\in \omega}$ is almost injective (resp. almost surjective) if the kernel (resp. cokernel) is an almost zero sequence of modules. 
\item In that case, we also say that $(M_i)_i^a\to (N_i)_i^a$ is injective (resp. surjective), although strictly speaking morphisms between almost sequences of modules are \textit{not} functions between sets.
\item We say that $(M_i)_{i\in \omega}\to (N_i)_{i\in \omega}$ is an almost isomorphism (or that $(M_i)_{i\in \omega}$ is almost isomorphic to $(N_i)_{i\in \omega}$) if it is both almost injective and almost surjective.
\end{enumerate}

\end{notation}
\bl \label{almostfg}
Let $A$ be an $\Oo_K$-algebra. Then: 
\begin{enumerate}[label=(\roman*)]
\item The diagonal functor $\Delta:A\lmod\to (A\lmod)^\omega$ descends to an exact faithful functor 
$$ (A^a\lmod,\otimes) \to (\Delta A^a\lmod,\otimes) :N^a\mapsto (\Delta N)^a$$
which preserves tensor products. This functor will also be denoted by $\Delta $. 
\end{enumerate}
Let $M$ be an $A$-module. Then:
\begin{enumerate}[label=(\roman*)]
  \setcounter{enumi}{1}
\item $M^a$ is (uniformly) almost finitely generated if and only if there is an almost isomorphism $(M_i)_{i\in \omega}\to \Delta M$ with $(M_i)_{i\in \omega}$ (uniformly) finitely generated.
\item $M^a$ is (uniformly) almost finitely presented if and only if there is an almost isomorphism $(M_i)_{i\in \omega}\to \Delta M$ with $(M_i)_{i\in \omega}$ (uniformly) finitely presented.
\end{enumerate}
\el 
\begin{proof}
(i) This follows from Lemma \ref{almostzerolem} and the universal property of Serre quotients (see Tag 02MS \cite{sp}). Tensor products in $A^a\lmod$ (resp. $\Delta A^a\lmod$) are compatible with $A\lmod$ (resp. $\Delta A\lmod$) and are thus preserved. Parts (ii) and (iii) are clear from the definitions.
\end{proof}

\subsubsection{$A^a\lmod$ as a subcategory of $S^{-1}A_U\lmod$} \label{embeddingsection}
We use the (suggestive) notation $S^{-1}({-})_U$ for several related functors; it will always be clear, either from context or by explicitly saying so, which variant is being used. 
\bp \label{embeddingcat}
Let $A$ be an $\Oo_K$-algebra. Then: 
\begin{enumerate}[label=(\roman*)]
\item We have an exact faithful functor of abelian categories 
$$ S^{-1}(\ulim ({-}) ): \Delta A^a\lmod \to S^{-1}A_U\lmod:(M_i)_i^a\mapsto S^{-1}(\ulim M_i)$$
which is also monoidal, i.e., there is a natural map 
$$S^{-1}(\ulim M_i)\otimes_{S^{-1}A_U} S^{-1}(\ulim N_i)\to S^{-1}(\ulim M_i\otimes_A N_i)$$
Moreover, this map is an isomorphism if $(M_i)_{i\in \omega}$ and $(N_i)_{i\in \omega}$ are both almost isomorphic to uniformly finitely generated sequences of $A$-modules.
\item We have an exact faithful functor of abelian categories
$$S^{-1}({-} )_U: A^a\lmod\to S^{-1}A_U\lmod:M^a\mapsto S^{-1}M_U$$
which is also monoidal, i.e., there is a natural map 
$$S^{-1}M_U\otimes_{S^{-1}A_U} S^{-1}N_U\to S^{-1}(M\otimes_A N)_U$$
Moreover, this map is an isomorphism if $M^a$ and $N^a$ are uniformly almost finitely generated over $A^a$.
\end{enumerate}
\ep 
\begin{proof}
(i) By Proposition \ref{embeddingcat1}, we have an exact monoidal functor 
$$S^{-1}(\ulim ({-}) ): (A\lmod)^\omega \to S^{-1}A_U\lmod:(M_i)_{i\in \omega} \mapsto S^{-1}(\ulim M_i)$$
with kernel $\Sigma^+$. By the universal property of Serre quotients (see Tag 06XK \cite{sp}), this induces an exact faithful monoidal functor as needed. The natural map is the one provided by Proposition \ref{embeddingcat1}(iii). \\
(ii) By Lemma \ref{almostfg}, we have an exact faithful monoidal functor 
$$ \Delta: (A^a\lmod,\otimes) \to (\Delta A^a\lmod,\otimes) :M^a\mapsto (\Delta M)^a$$
Note that $S^{-1}({-})_U$ is the composition of $S^{-1}(\ulim ({-}) )$ and $\Delta$, so it is an exact faithful monoidal functor. Now let $M^a$ and $N^a$ be uniformly almost finitely generated. By Lemma \ref{almostfg}, we have $M=(M_i)_i^a$ and $N=(N_i)_i^a$, for some uniformly finitely generated sequences of $A$-modules $M_i$ and $N_i$. We conclude from part (i). 
\end{proof}

\bc \label{preservationrem}
Let $A$ be an $\Oo_K$-algebra. Then the functor $S^{-1}({-} )_U: A^a\lmod \to S^{-1}A_U\lmod:M^a\mapsto S^{-1}M_U $ preserves and reflects short exact sequences, i.e.,
$$0 \longrightarrow M_1^a \longrightarrow M_2^a \longrightarrow M_3^a \longrightarrow 0 $$
is an exact sequence in $A^a\lmod $ if and only if the corresponding sequence
$$0 \longrightarrow S^{-1}M_{1,U} \longrightarrow S^{-1}M_{2,U}  \longrightarrow S^{-1}M_{3,U}  \longrightarrow 0 $$
is exact in $S^{-1}A_U$-mod. Similarly for $S^{-1}({-} )_U: \Delta A^a\lmod \to S^{-1}A_U\lmod$.
\ec
\begin{proof}
Exact faithful functors both preserve and reflect exact sequences; see Theorem 3.21 \cite{freyd} and the subsequent comment on pg. 67 \textit{loc.cit.}
\end{proof}
In particular, the functor $S^{-1}({-} )_U$ reflects isomorphisms. Namely, if $f\in \Hom(M^a, N^a)$ is such that $S^{-1}(f)_U$ is an isomorphism, then $f$ is also an isomorphism.
Beware that $S^{-1}M_U\cong S^{-1}M_U'$ does not imply that $M^a\cong M'^a$ in general. The point is that $S^{-1}({-} )_U$ is \textit{not full} and hence a morphism between $S^{-1}M_U$ and $S^{-1}M_U'$ may not come from a morphism between $M^a$ and $M'^a$. 
\begin{example} [cf. Example 2.4 \cite{ScholzeHodge}]
Let $K$ be a perfectoid field, $\varpi$ a pseudo-uniformizer and $r \in \mathbb{R} \backslash \Gamma$. Consider the non-principal ideal 
$$I_r=\{x\in \Oo_K: v(x)>r\}\subseteq \Oo_K$$
Applying $S^{-1}({-})_U$ to $I_r$ gives the \textit{principal} ideal $$J_r=\{x\in \Oo_w: w(x)\geq r\} \subseteq \Oo_w$$ 
Now $J_r$ is clearly isomorphic to $\Oo_w$ as an $\Oo_w$-module but one can show that $I_r^a\not \cong \Oo_K^a$ in $\Oo_K^a\lmod$. 
\end{example}
We now start developing a dictionary between the two frameworks.
\bl \label{fgalmostmodule}
Let $A$ be an $\Oo_K$-algebra and $(M_i)_{i\in \omega}$ be a sequence of $A$-modules. Then:
\begin{enumerate}[label=(\roman*)]
\item Every finitely generated submodule of $S^{-1}(\ulim M_i)$ is of the form $S^{-1}(\ulim N_i)$, for some uniformly finitely generated $A$-submodules $N_i$ of $M_i$. 

\item $(M_i)_{i\in \omega}$ is almost isomorphic to a sequence of uniformly finitely generated $A$-modules if and only if $S^{-1}(\ulim M_i)$ is a finitely generated $S^{-1}A_U$-module. 

\item Let $M$ be an $A$-module. Then $M^a$ is uniformly almost finitely generated if and only if $S^{-1}M_U$ is a finitely generated $S^{-1}A_U$-module. 
\item Let $M$ be an $A$-module. Then $M^a$ is uniformly almost finitely presented if and only if $S^{-1}M_U$ is a finitely presented $S^{-1}A_U$-module. 
\end{enumerate}
\el 
\begin{proof}
(i) Let $\phi$ be the inclusion map of a finitely generated submodule of $S^{-1}(\ulim M_i)$
$$\phi: S^{-1}A_U\cdot m_1+...+S^{-1}A_U\cdot m_n \to S^{-1}(\ulim M_i) $$ 
where $m_i\in S^{-1}(\ulim M_i)$. By rescaling the $m_i$'s with some $s\in S$, we may assume that  $m_i \in \ulim M_i$. We then get a natural inclusion  
$$\Phi:A_U\cdot m_1+...+A_U\cdot m_n \to \ulim M_i$$ 
such that $S^{-1}\Phi=\phi$. We can write $m_j=\ulim m_{j,i}$ and $\Phi=\ulim \Phi_i$, where 
$$\Phi_i: A\cdot m_{1,i}+...+A\cdot m_{n,i}\to M_i$$
is the inclusion map. Set $N_i=A\cdot m_{1,i}+...+A\cdot m_{n,i}$ and note that $S^{-1}(\ulim N_i)=S^{-1}A_U\cdot m_1+...+S^{-1}A_U\cdot m_n$ as needed.\\
(ii) Suppose that $(M_i)_{i}^a=(M_i')_i^a$ and that the $M_i$'s are uniformly finitely generated. By Proposition \ref{embeddingcat}, we have that $S^{-1}(\ulim M_i)=S^{-1}(\ulim M_i')$ and the latter is clearly finitely generated. Conversely, suppose that $S^{-1}(\ulim M_i)$ is finitely generated. By (i), we can find uniformly finitely generated $A$-submodules $N_i$ of $M_i$ such that $S^{-1}(\ulim N_i)=S^{-1}(\ulim M_i)$. By Corollary \ref{preservationrem}, we get that the map $(N_i)_{i\in \omega}\to (M_i)_{i\in \omega}$, obtained by patching the inclusions $N_i\to M_i$, is an almost isomorphism.\\
(iii) From (ii) and Lemma \ref{almostfg}(ii).\\
(iv) Suppose $M^a$ is uniformly almost finitely presented. By Lemma \ref{almostfg}(iii), $\Delta M$ is almost isomorphic to a sequence $(M_i')_{i\in \omega}$ of uniformly almost finitely presented modules. Now $M_U'$ is clearly a finitely presented $A_U$-module and hence $S^{-1}M_U'$ is a finitely presented $S^{-1}A_U$-module. The conclusion follows because $S^{-1}M_U=S^{-1}M_U'$. 

Conversely, suppose 
there exist $m,n\in \N$ and a short exact sequence of $S^{-1}A_U$-modules
$$0\longrightarrow S^{-1} A_U^m \stackrel{\phi}\longrightarrow S^{-1}A_U^n \stackrel{\psi}\longrightarrow S^{-1}M_U\longrightarrow 0$$
This induces an isomorphism $\overline{\psi}: S^{-1}A_U^n/S^{-1}A_U^m\to S^{-1}M_U$. Write $(\varepsilon_i)_{i=1}^m$ and $(e_i)_{i=1}^n$ for the standard bases of $S^{-1}A_U^m$ and $S^{-1}A_U^n$. By rescaling the $e_i$'s with some $s\in S$, we may assume that $\psi(e_i)\in M_U$. This defines a map $\Psi: A_U^n \to M_U$ with $\psi=S^{-1}\Psi$. By rescaling the $\varepsilon_i$'s, we may assume that each $\phi(\varepsilon_i)$ is in the $A_U$-linear span of the $e_i$'s and moreover that  $\Psi(\phi(\varepsilon_i))=0$. We thus obtain $A_U$-linear maps
$$A_U^m \stackrel{\Phi}\longrightarrow A_U^n \stackrel{\Psi}\longrightarrow M_U $$
with $\Psi\circ \Phi=0$, $\phi=S^{-1}\Phi$ and $\psi=S^{-1}\Psi$. Moreover, the map $\Phi: A_U^m\to A_U^n$ is injective. Arguing as in (i), we obtain $A$-linear maps
$$A^m \stackrel{\Phi_i}\longrightarrow A^n \stackrel{\Psi_i}\longrightarrow M $$
with $\Phi=\ulim \Phi_i$ and $\Psi=\ulim \Psi_i$. In particular, we have that $\Psi_i\circ \Phi_i=0$ and that $\Phi_i$ injective for almost all $i$. For almost all $i$, we identify $A^m$ with its image in $A^n$ via $\Phi_i$. With this identification in place, we obtain a map $\overline{\Psi}_i:A^n/A^m\to M$ induced from $\Psi_i$. Set $\overline{\Psi}_i=0$ for all other $i$. Since $\overline{\psi}=S^{-1}\overline{\Psi}_U$ is an isomorphism, we get that $(\overline{\Psi}_i)_{i\in \omega}$ is an almost isomorphism by Corollary \ref{preservationrem}.  We conclude from Lemma \ref{almostfg}(iii).
\end{proof}
\bt \label{dictionary}
Let $A$ be an $\Oo_K$-algebra and $M$ be an $A$-module. Then:
\begin{enumerate}[label=(\roman*)]
\item Suppose $M^a$ is uniformly almost finitely generated. Then $M^a$ is flat as an $A^a$-module if and only if $S^{-1}M_U$ is flat as an $S^{-1}A_U$-module.
\item $M^a$ is uniformly finite projective as an $A^a$-module if and only if $S^{-1}M_U$ is finite projective as an $S^{-1}A_U$-module.
\end{enumerate}
Let $B$ be an $A$-algebra such that $B^a$ is is uniformly almost finitely generated as an $A^a$-module. Then:
\begin{enumerate}[label=(\roman*)]
  \setcounter{enumi}{2}
\item $B^a$ is unramified over $A^a$ if and only if $S^{-1}B_U$ is unramified over $S^{-1}A_U$.

\item $B^a$ is \'etale over $A^a$ if and only $S^{-1}B_U$ is \'etale over $S^{-1}A_U$.
\end{enumerate}
\et 
\begin{proof}
(i) Suppose that $M^a$ is flat. By Tag 00HH \cite{sp}, it suffices to show that for any finitely generated ideal $J \trianglelefteq S^{-1}A_U$, the multiplication map $J \otimes S^{-1}M_U\to S^{-1}M_U$ is injective.  By Lemma \ref{fgalmostmodule}(i), we can write $J =S^{-1} (\ulim I_i)$, where the $I_i$'s are uniformly finitely generated ideals of $A$. Since $M^a$ is flat, we have that the multiplication map $\phi_i: I_i\otimes_A M\to M$ is almost injective for each $i$.
By Lemma \ref{almostfg}(ii), $\Delta M$ is almost isomorphic to a sequence of uniformly finitely generated modules. By Proposition \ref{embeddingcat}(i), we have that the canonical map 
$$S^{-1}(\ulim I_i)\otimes_{S^{-1}A_U} S^{-1} M_U\to S^{-1}(\ulim (I_i \otimes_A M))$$ 
is an isomorphism. We now have a commutative diagram 
\[
\begin{tikzcd}[column sep=4.5em]
S^{-1}(\ulim (I_i \otimes_A M)) \arrow{r}{S^{-1}(\ulim \phi_i)} & S^{-1}M_U \\
S^{-1}(\ulim I_i)\otimes_{S^{-1}A_U} S^{-1}M_U \arrow{ur} \arrow{u}{\cong}
\end{tikzcd}
\]
where the map $S^{-1}(\ulim \phi_i)$ is injective because $(\phi_i)_i^a$ is injective and $S^{-1}(\ulim ({-}))$ is exact. 

Conversely, suppose $S^{-1}M_U$ is flat as an $S^{-1}A_U$-module. By Lemma \ref{almostringlem}, it suffices to show that for any  finitely generated $A$-modules $M_1$ and $M_2$ and any almost injective map $\phi:M_1\to M_2$ in $A\lmod$, the map 
$$\id_M \otimes \phi: M\otimes_{A} M_1\to M\otimes_{A} M_2$$ 
is almost injective. The map $S^{-1}\phi_U: S^{-1}M_{1,U} \to S^{-1}M_{2,U}$ is injective because $S^{-1}({-})_U$ is exact. Since $S^{-1}M_U$ is flat, we get that the map
$$ \id_{S^{-1}M_U} \otimes S^{-1}\phi_U : S^{-1}M_U \otimes_{S^{-1}A_U} S^{-1}M_{1,U} \to S^{-1}M_U \otimes_{S^{-1}A_U} S^{-1}M_{2,U}$$ 
is injective. By Proposition \ref{embeddingcat}(ii) and since $M^a,M_1^a$ and $M_2^a$ are uniformly almost finitely generated, we have canonical isomorphisms 
$$S^{-1}M_U \otimes_{S^{-1}A_U} S^{-1}M_{i,U} \stackrel{\cong}\to S^{-1}(M\otimes_A M_i)_U$$ 
for $i=1,2$. These isomorphisms fit into the commutative diagram below
\[
\begin{tikzcd}[column sep=4.5em]
S^{-1}M_U \otimes_{S^{-1}A_U} S^{-1}M_{1,U}  \arrow{r}{\id_{S^{-1}M_U} \otimes S^{-1}\phi_U }& S^{-1}M_U \otimes_{S^{-1}A_U} S^{-1}M_{2,U}  \\
 S^{-1}(M\otimes_A M_1)_U \arrow{u}{\cong} \arrow{r}{S^{-1}(\id_M \otimes \phi)} & S^{-1}(M\otimes_A M_2)_U \arrow{u}{\cong}
\end{tikzcd}
\]
Since $\id_{S^{-1}M_U} \otimes S^{-1}\phi_U$ is injective, we also get that 
$S^{-1}(\id_M \otimes \phi)_U$ is injective. By Corollary \ref{preservationrem}, $S^{-1}({-})_U:A^a\lmod\to S^{-1}A_U\lmod$ reflects exact sequences and hence $\id_M \otimes \phi$ is almost injective. \\
(ii) By Fact \ref{flatvsproj}(ii), $M^a$ is uniformly finite projective if and only if $M^a$ is flat and uniformly finitely presented. By (i) and Lemma \ref{fgalmostmodule}(iv), this is equivalent to $S^{-1}M_U$ being flat and finitely presented as an $S^{-1}A_U$-module. The latter is equivalent to $S^{-1}M_U$ being projective as an $S^{-1}A_U$-module by Tag 058RA \cite{sp}.  \\
(iii) Note throughout that, as a consequence of Proposition \ref{embeddingcat}(ii), we have 
$$S^{-1}(B\otimes_A B)_U=S^{-1}B_U\otimes_{S^{-1}A_U} S^{-1}B_U$$ 
Now $B^a$ is unramified over $A^a$ if and only if $B^a$ is almost projective as a $B^a\otimes_{A^a} B^a$-module. Since $B^a$ is clearly uniformly almost finitely generated over $B^a\otimes_{A^a} B^a$, this is also equivalent to $B^a$ being uniformly finite projective as a $B^a\otimes B^a$-module. By (ii), this is equivalent to $S^{-1}B_U$ being finite projective as a $S^{-1}B_U\otimes_{S^{-1}A_U} S^{-1}B_U$-module. The latter is equivalent to $S^{-1}B_U$ being unramified over $S^{-1}A_U$.\\
(iv) Clear from (i) and (iii).
\end{proof}
\begin{example} [cf. Example 4.9 \cite{Scholze}]\label{sqrootexmplefg}
Assume $p\neq 2$. Let $K$ be the $p$-adic completion of $\Q_p(p^{1/p^{\infty}})$ and $K'=K(p^{1/2})$. 
\begin{enumerate}[label=(\roman*)]
\item Recall that the extension $\Oo_{K'}^a/\Oo_K^a$ is \'etale from Example \ref{quadex}. The $\Oo_K$-module $\Oo_{K'}$ is uniformly almost finitely presented (with $n=2$). Indeed, for each $m\geq 1$, we have an injective map
$$\phi_m: \Oo_K\oplus p^{1/2p^m}\Oo_K\to \Oo_{K'}$$
with cokernel killed by $p^{1/2p^m}$. We conclude that $\Oo_{K'}^a/\Oo_K^a$ is finite \'etale.

\item Let $K_U,K_U'$ be the corresponding non-principal ultrapowers. By \L o\'s, we can write $K_U'=K_U(\pi^{1/2})$, where $\pi=\ulim p^{1/p^n}$. 
The map $(\phi_m)_{m\in \omega}$ from part (i) is an almost isomorphism in $\Delta \Oo_K^a\lmod$. Applying the exact functor $S^{-1}(-)_U$ gives that the canonical inclusion 
$$S^{-1}\phi_U:\Oo_w\oplus \pi^{1/2} \Oo_w\to \Oo_{w'}$$
is an isomorphism, hence $\Oo_{w'}=\Oo_w[\pi^{1/2}]$. Since $\pi\in \Oo_w^{\times}$ and $p\neq 2$, the extension $\Oo_{w'}/\Oo_w$ is a finite \'etale extension. 
\end{enumerate}

\end{example}
Our framework is not the first one to achieve a passage from almost ring theory to ring theory. After writing this paper, O. Gabber made us aware of the following very similar construction which first appeared in \S 17.5 \cite{gabber2018foundations}:
\begin{remark}[Gabber's construction] \label{Gabberscons}
Let $A$ be an $\Oo_K$-algebra. Given an  $A$-module $M$, we define 
$$M^{\diamond}=M^\omega/M^{(\omega)}$$ 
Namely, $M^{\diamond}$ is the reduced product of $\omega$ copies of $M$ with respect to the cofinite (or Fr\'echet) filter on $\omega$. Let 
$$S=\{(\varpi^{\alpha_n})_{n\in \omega} \in \Oo_K^{\omega}: \lim_{n\to \infty} \alpha_n \to 0\}$$
By  Lemma 17.5.5 \cite{gabber2018foundations}, we have an exact faithful functor 
$$S^{-1}(-)^{\diamond}: A^a\lmod \to S^{-1}A^\diamond\lmod: M^a\mapsto S^{-1}M^\diamond $$
Gabber used this functor to pass from an almost Cohen-Macaulay algebra to an honest one and simplified a step in the proof of the direct summand conjecture (cf. \S 6.5 \cite{dospinescu}). 
\end{remark}
It is often preferable to work with ultraproducts rather than reduced products because they have nicer preservation properties and are more susceptible to model-theoretic analysis. Note for instance that $(\Oo_{K})_U$ is still a valuation ring, while $\Oo_K^{\diamond}$ is not even an integral domain. In \S \ref{char0}, we utilized the standard decomposition to give a short proof that every finite extension of $K_U$ is unramified with respect to $\Oo_w=S^{-1}(\Oo_{K})_U$.
\subsubsection{Almost purity over perfectoid valuation rings} \label{almostpuritysec}
We now deduce the almost purity theorem over perfectoid valuation rings from its non-standard counterpart. First, we need a lemma:
\bl \label{semiperfectlem}
Let $(K,v)$ be an $\aleph_1$-saturated valued field of positive residue characteristic $p>0$. 
Let $\varpi\in \mathfrak{m}_v\backslash \{0\}$ with $0< v\varpi \leq vp$ and $w$ be the coarsest coarsening of $v$ such that $w\varpi>0$. Suppose that $\Gamma_w$ does not have a minimal positive element. Then, the following are equivalent: 
\begin{enumerate}[label=(\roman*)]
\item $\Oo_v/\varpi \Oo_v$ is semi-perfect.

\item $k_w$ is perfect.

\item $\Oo_w/\varpi \Oo_w$ is semi-perfect. 

\end{enumerate}
\el 
\begin{proof}
Let $w^+$ be the finest coarsening of $v$ with $w^+\varpi=0$ and consider the standard decomposition with respect to $\varpi$ below
$$K\stackrel{w^+} \longrightarrow k_{w^+} \stackrel{\overline{w}} \longrightarrow k_{w} \stackrel{\overline{v}}\longrightarrow k_{v} $$ 
By Lemma \ref{sphericalcompletelemma}, the valued field $(k_{w^+},\overline{w})$ is spherically complete with value group $\Z$ or $\mathbb{R}$. Since $\Gamma_{w}$ has no minimal positive element and $\Gamma_{\overline{w}}$ is a convex subgroup of $\Gamma_w$, we must have that $\Gamma_{\overline{w}}=\mathbb{R}$. \\
(i)$\Rightarrow$(ii): We have $\Oo_{\overline{v}}=(\Oo_v/\varpi\Oo_v)_{\text{red}}$ and $k_w=\text{Frac}(\Oo_{\overline{v}})$. Since $\Oo_v/\varpi\Oo_v$ is semi-perfect, the same is true for $(\Oo_v/\varpi\Oo_v)_{\text{red}}$ and therefore $k_w$ is perfect. \\
(ii)$\Rightarrow$(iii): 
By Fact \ref{tamechar}, the valued field $(k_{w^+},\overline{w})$ is tame. Tame fields are perfect and deeply ramified (see Theorem 1.2 \cite{KuhlBla}). In particular, we get that $\Oo_{\overline{w}}/\varpi \Oo_{\overline{w}}$ is semi-perfect. Finally, note that $\Oo_w/\varpi \Oo_w$ maps isomorphically to $\Oo_{\overline{w}}/\varpi \Oo_{\overline{w}}$ via the residue map of $w^+$.\\
(iii)$\Rightarrow$(i): 
First, using that $\Gamma_{\overline{w}}=\mathbb{R}$, we can find $\varepsilon$ such that
$ p\cdot w\varepsilon <w \varpi <2p \cdot w \varepsilon$. Observe that $\varpi \Oo_w\subseteq \varepsilon^p \Oo_v$ using the chain of inclusions below
$$\varpi \Oo_w \subseteq \varepsilon^p \mathfrak{m}_w \subseteq \varepsilon^p \mathfrak{m}_v \subseteq \varepsilon^p \Oo_v $$
Now let $x \in \Oo_v$. Since $\Oo_w/\varpi \Oo_w$ is semi-perfect, there is $y_1 \in \Oo_w$ such that $x\equiv y_1^p \mod \varpi \Oo_w$. 
Since $\varpi \Oo_w\subseteq \varepsilon^p \Oo_v$, there is $x_1\in \Oo_v$ such that
$$x=y_1^p+\varepsilon^p \cdot x_1$$ 
Note also that $y_1\in \Oo_v$. By similar reasoning, there are $x_2,y_2\in \Oo_v$ such that $x_1=y_2^p+\varepsilon^p \cdot x_2$. 
We then compute that 
$$x=y_1^p+\varepsilon^p \cdot y_2^p+ \varepsilon^{2p} \cdot x_2$$ 
Computing modulo $\varpi \Oo_v$, we get
$$x\equiv (y_1+ \varepsilon \cdot y_2 )^p \mod \varpi \Oo_v $$
We conclude that $\Oo_v/\varpi \Oo_v$ is semi-perfect.
\end{proof}
\begin{Theorem} [Almost purity] \label{almostpurity}
Let $(K,v)$ be a perfectoid field and $(K',v')/(K,v)$ be a finite extension. Then:
\begin{enumerate}[label=(\roman*)]
\item $(K',v')$ is a perfectoid field.

\item The extension $\Oo_{K'}^a/\Oo_K^a$ is finite \'etale.
\end{enumerate}
\end{Theorem}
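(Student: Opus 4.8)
The plan is to obtain both parts as consequences of the non-standard Tate/Gabber--Ramero theorem (Theorem~\ref{nonstdtategr}) via the dictionary of Theorem~\ref{dictionary}. Fix a pseudouniformizer $\varpi\in\mathfrak m_v$, i.e.\ $0<v\varpi\le vp$. Since $v$ has rank one, $\Oo_v[\varpi^{-1}]=K$ is a field, hence trivially algebraically maximal, so $(K,v)$ satisfies the hypotheses of Theorem~\ref{nonstdtategr} (equivalently, together with $\varpi$, it lies in the class $\mathcal C$ of \S\ref{nicedeeplyram}). Pass to a non-principal ultrapower $(K_U,v_U)$ along $\omega$: it is an $\aleph_1$-saturated elementary extension of $(K,v)$. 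Let $w$ be the coarsest coarsening of $v_U$ with $w\varpi>0$ and $S$ the multiplicative set of elements of infinitesimal valuation, so that $\Oo_w=S^{-1}(\Oo_K)_U$ as in \S\ref{embeddingsection}.

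\emph{Part (ii).} Write $K'=K(a)$ with $a\in\Oo_{v'}$ of minimal polynomial $f\in\Oo_v[X]$ and $\deg f=n=[K':K]$. By \L o\'s, $f$ stays irreducible over $K_U$, so $K'_U:=K_U[X]/(f)$ is a degree-$n$ extension of $K_U$ — and is the ultrapower of $(K',v')$, $v'_U$ being the unique extension of $v_U$. The unique extension $w'$ of $w$ to $K'_U$ is then the coarsest coarsening of $v'_U$ with $w'\varpi>0$, and a direct check (as in \S\ref{embeddingsection}) gives $\Oo_{w'}=S^{-1}(\Oo_{K'})_U$. Now $\Gamma_v$ is a dense subgroup of $\mathbb R$, hence regularly dense, so Theorem~\ref{nonstdtategr}(ii) applies (with $(K_0,v_0)=(K,v)$): the extension $(K'_U,w')/(K_U,w)$ is unramified and $\Oo_{w'}/\Oo_w$ is standard \'etale. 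As $\Oo_w$ is local, $\Oo_{w'}$ is therefore a free $\Oo_w$-module of rank $n$, in particular finite \'etale over $\Oo_w$. Since $S^{-1}(\Oo_{K'})_U=\Oo_{w'}$ is finitely generated (indeed free) over $S^{-1}(\Oo_K)_U=\Oo_w$, Lemma~\ref{fgalmostmodule}(iii)--(iv) shows that $\Oo_{K'}^a$ is uniformly almost finitely presented over $\Oo_K^a$; then Theorem~\ref{dictionary}(iv), applied with $A=\Oo_K$ and $B=\Oo_{K'}$, transfers the \'etaleness of $S^{-1}B_U$ over $S^{-1}A_U$ to the \'etaleness of $\Oo_{K'}^a$ over $\Oo_K^a$ (and Theorem~\ref{dictionary}(ii) even gives uniform finite projectivity). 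Being \'etale and almost finitely presented, $\Oo_{K'}^a/\Oo_K^a$ is finite \'etale, which is (ii).

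\emph{Part (i).} That $(K',v')$ is complete of rank one and residue characteristic $p$ is routine, and $\Gamma_{v'}$ is a subgroup of $\mathbb R$ containing the dense subgroup $\Gamma_v$ with finite index, hence dense. It remains to show that $\Phi\colon\Oo_{K'}/(p)\to\Oo_{K'}/(p)$ is surjective. If $\operatorname{char}K=p$ this is immediate, since a finite extension of the perfect field $K$ is perfect. If $\operatorname{char}K=0$, reduce the finite \'etale $\Oo_K^a$-algebra $\Oo_{K'}^a$ modulo $p$: then $(\Oo_{K'}/(p))^a$ is finite \'etale over $(\Oo_K/(p))^a$, whose Frobenius is surjective because $\Oo_K/(p)$ is semi-perfect; since an \'etale morphism is compatible with Frobenius (the relative Frobenius being an isomorphism), the Frobenius of $(\Oo_{K'}/(p))^a$ is surjective as well, i.e.\ $\mathfrak m\cdot(\Oo_{K'}/(p))\subseteq\Phi(\Oo_{K'}/(p))$. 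Promoting this almost-surjectivity to honest surjectivity is the classical statement that finite extensions of perfectoid fields are perfectoid (see \cite{Scholze}, \cite{GR}), for which (ii) supplies precisely the almost-\'etale input. Hence $(K',v')$ is perfectoid.

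The mechanical heart is Part (ii): granting the identifications $\Oo_w=S^{-1}(\Oo_K)_U$ and $\Oo_{w'}=S^{-1}(\Oo_{K'})_U$, it is a formal consequence of Theorems~\ref{nonstdtategr} and~\ref{dictionary}. The genuinely delicate point is the final step of Part (i) in mixed characteristic: upgrading the \emph{almost} surjectivity of Frobenius on $\Oo_{K'}/(p)$ to honest surjectivity — the one place where the non-standard dictionary yields the ``almost'' version but not the classical statement on the nose, and where a separate descent (or deformation) argument is needed.
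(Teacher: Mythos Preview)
Your Part (ii) is essentially the paper's argument: pass to the ultrapower, invoke Theorem~\ref{nonstdtategr}(ii) to get $\Oo_{w'}/\Oo_w$ standard \'etale (hence free of rank $n$), and then translate back via Theorem~\ref{dictionary} and Lemma~\ref{fgalmostmodule}.

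For Part (i) your route to \emph{almost} surjectivity of Frobenius differs from the paper's. You reduce the finite \'etale extension $\Oo_{K'}^a/\Oo_K^a$ modulo $p$ and use Frobenius-compatibility of \'etale maps; the paper instead observes that $(K'_U,w')$ is tame (as a finite extension of the tame field $(K_U,w)$), hence deeply ramified, so Frobenius on $\Oo_{w'}/(p)$ is honestly surjective, and then reflects this via Corollary~\ref{preservationrem}. Both are valid; the paper's avoids invoking almost-ring-theoretic facts about base change and relative Frobenius, while yours reuses (ii) more directly.

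The real issue is your final step. You write that upgrading almost surjectivity on $\Oo_{K'}/(p)$ to honest surjectivity ``is the classical statement that finite extensions of perfectoid fields are perfectoid (see \cite{Scholze}, \cite{GR})'' and call it ``the genuinely delicate point''. This is circular---it cites exactly what you are proving---and in fact the step is not delicate at all; the paper dispatches it in one line. Here is the missing computation: fix $\varpi$ with $0<v\varpi<vp$, and let $x\in\Oo_{K'}$. Choose $\varepsilon\in\mathfrak m_K$ with $0<v\varepsilon\le vp-v\varpi$. Almost surjectivity gives $y\in\Oo_{K'}$ with $\varepsilon x\equiv y^p\bmod p$. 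Since $\Oo_K/(p)$ is semi-perfect, write $\varepsilon\equiv\delta^p\bmod p$ with $\delta\in\Oo_K$, so $v\delta=v\varepsilon/p$ and $\delta^p x\equiv y^p\bmod p$. Then $v'(y^p)\ge\min(p\,v\delta,vp)=p\,v\delta$, so $z:=y/\delta\in\Oo_{K'}$, and $x-z^p=(\delta^p x-y^p)/\delta^p$ has valuation $\ge vp-v\varepsilon\ge v\varpi$. Thus $\Phi$ is surjective on $\Oo_{K'}/(\varpi)$, which by Remark~\ref{remarksforperfectoid}(i) is the perfectoid condition. No external input is needed.
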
 
\begin{proof}
Write $K_U$ (resp. $K_U'$) for a non-principal ultrapower and $w$ (resp. $w'$) for the coarsest coarsening of $v_U$ with $w\varpi>0$ (resp. $w'\varpi>0$).\\
(i) The only non-trivial thing to show is that $\Oo_{K'}/p\Oo_{K'}$ is semi-perfect. Since $\Oo_K/p\Oo_K$ is semi-perfect, we get that $k_w$ is perfect by Lemma \ref{semiperfectlem}. The same is true for $k_{w'}$ since it is a finite extension of $k_w$. By Lemma \ref{semiperfectlem}, we get that $\Oo_{K_U'}/p\Oo_{K_U'}$ is semi-perfect. We conclude that $\Oo_{K'}/p\Oo_{K'}$ is semi-perfect.\\
(ii) By Theorem \ref{modeltheoreticFont}(I), we have that $(K_U',w')/(K_U,w)$ is unramified. Equivalently, by Fact \ref{unramfact}, the extension $\Oo_{w'}/\Oo_w$ is finite \'etale. In particular, $\Oo_{w'}$ is a free $\Oo_w$-module of rank $n$ with $n=[K_U':K_U]=[K':K]$. By Theorem \ref{dictionary}(ii), we get that $\Oo_{K'}^a$ is a uniformly finite projective $\Oo_K^a$-module. In particular, $\Oo_{K'}^a$ is a uniformly finitely presented $\Oo_K^a$-module. We conclude that $\Oo_{K'}^a/\Oo_K^a$ is finite \'etale from Theorem \ref{dictionary}(iv).
\end{proof}
We refer the reader to pg. 29 \cite{Scholze} and \cite{Ishiro}, which provide several different proofs of almost purity over perfectoid valuation rings.

\subsubsection*{Acknowledgements} 
We wish to thank Sylvy Anscombe, Philip Dittmann, Arno Fehm, Ofer Gabber, Franz-Viktor Kuhlmann, François Loeser, Lucas Mann, Silvain Rideau-Kikuchi and Tom Scanlon for fruitful discussions and encouragement. Special thanks to P. Dittmann, A. Fehm, F.-V. Kuhlmann and F. Loeser who also provided us with thoughtful comments on earlier versions and to O. Gabber who informed us about his construction. Finally, we are grateful to an anonymous referee for their thorough report.


\printbibliography

\end{document}